\newcommand\up[1]{\mbox{\raisebox{1pt}{\ensuremath{#1}}}} 
\newcommand\restr[2]{{
  \left.\kern-\nulldelimiterspace 
  #1 
  \littletaller 
  \right|_{#2} 
  }}
\pgfplotsset{compat=1.18, width=8cm}
\newtheorem{thm}{Theorem}[section]
\newtheorem{lemma}[thm]{Lemma}
\newtheorem{prop}[thm]{Proposition}
\theoremstyle{definition}
\newtheorem{defn}[thm]{Definition}
\newtheorem{eg}[thm]{Example}
\theoremstyle{definition}
\newtheorem{rmk}[thm]{Remark}
\theoremstyle{definition}
\newtheorem{note}[thm]{Note}
\theoremstyle{definition}
\numberwithin{equation}{section}
\title{Character space and Gelfand type representation of locally $C^{\ast}$-algebra}
\author[Pamula]{Santhosh Kumar Pamula}
\address{Santhosh Kumar Pamula, Department of Mathematical Sciences, Indian Institute of Science  Education and Research (IISER) Mohali, Knowledge City, S.A.S Nagar, Punjab 140306, India.}
\email{santhoshkp@iisermohali.ac.in}
\author[Siddique]{Rifat Siddique}
\address{Rifat Siddique, Department of Mathematical Sciences, Indian Institute of Science  Education and Research (IISER) Mohali, Knowledge City, S.A.S Nagar, Punjab 140306, India.}
\email{mp21008@iisermohali.ac.in}
\subjclass[2020]{46A03, 47B37, 47A60}
\keywords{Character space, Functional calculus, Inductive limit, Locally $C^{\ast}$-algebra, Locally bounded operator, Projective limit.}
\date{\today}
\newcommand{\littletaller}{\mathchoice{\vphantom{\big|}}{}{}{}}
\begin{document}

\maketitle
\begin{abstract}
    In this article, we identify a suitable approach to define the character space of  a commutative unital locally $C^{\ast}$-algebra via the notion of the inductive limit of topological spaces. Also we discuss topological properties of the character space. We establish the Gelfand type representation between a commutative unital locally $C^{\ast}$-algebra and the space of all continuous functions defined on its character space. Equivalently, we prove that every commutative unital locally $C^{\ast}$-algebra is identified with the locally $C^{\ast}$-algebra of continuous functions on its character space through the coherent representation of projective limit of $C^{\ast}$-algebras. Finally, we construct a unital locally $C^{\ast}$-algebra generated by a given locally bounded normal operator and show that its character space is homeomorphic to the local spectrum. Further, we define the functional calculus and prove spectral mapping theorem in this framework. 
    
\end{abstract}
\section{Introduction and Preliminaries}\label{section:1}
In 1971, A. Inoue introduced the notion of locally $C^{\ast}$-algebras \cite{Inoue}. In the literature, locally $C^{\ast}$-algebras are also referred to as LMC$^{\ast}$-algebras \cite{schmudgen},\;b$^\ast$-algebras \cite{AllanG},\;pro C$^\ast$-algebras \cite{Phillips,voiculescu} and multinormed C$^{\ast}$-algebras \cite{dosiev}. However in the simplest way, a locally $C^{\ast}$-algebra can be seen as the projective limit of projective system of $C^{\ast}$-algebras \cite{gheondea}.

In case of commutative unital $C^{\ast}$-algebras, the well-known Gelfand representation gives a complete characterization in terms of space of continuous functions on a compact Hausdorff space. In fact, the compact Hausdorff space is given by space of all characters or multiplicative linear functionals (\cite{kehezhu}). Our aim in this article is to bring out a suitable notion of character space of a commutative unital locally $C^{\ast}$-algebra, establish a Gelfand type representation and define continuous functional calculus at a given locally bounded normal operator. The authors of \cite{arens,George,schmudgen} have studied the similar aspects upto some extent. However, in this work, we give a complete characterization.

We organize this article into three sections. In the first section, we recall important aspects like projective limit, inductive limit of locally convex spaces \cite{gheondea,kothe,narici}. This helps in understanding definitions of locally $C^{\ast}$-algebras and locally Hilbert spaces respectively. In Remark \ref{rmk: Invlimit} and the discussion that followed will reiterate the fact that every locally $C^{\ast}$-algebra is a projective limit of $C^{\ast}$-algebras. Both Example \ref{Example: loc C* alg} and Example \ref{example: loc hilbert eg} give clear description of the concepts defined in this section. Further, we recall the class of locally bounded operators on a given locally Hilbert space or quantized domain following notations given in \cite{SanthoshKP,dosiev,gheondea}.

In section \ref{section: loc maximal ideal sp}, we give a brief discussion on search for a suitable definition of character space of commutative unital locally $C^{\ast}$-algebra $\mathcal{A}.$ Firstly, we show that a multiplicative linear functional on $\mathcal{A}$ is not necessarily continuous (see Example \ref{eg: Phi is not continuous}). In Example \ref{eg: Phi not associated to varphi1}, we show that a multiplicative linear functional may not also be induced from corresponding multiplicative linear functional on every quotient $C^{\ast}$-algebra. Using these observations, the character space is defined in Definition \ref{defn: characterspace} and we prove that it is an inductive limit of inductive system of compact Hausdorff spaces (see Equation \eqref{Eq: M_A inductive limit}). Moreover, the character space is completely regular with respect to the inductive limit topology (defined in Equation \eqref{eq: inductive limit topology}) and shown in Theorem \ref{thm: correspondence} that there is one to one correspondence with certain class of maximal ideals in $\mathcal{A}.$

In the last section, we define Gelfand type representation from $\mathcal{A}$ to the space of continuous functions defined on its character space. In fact, this is a coherent representation (see Theorem \ref{Thm: gelfand coherent}). Next, we define \textit{local spectrum} of a locally bounded operator and through Example \ref{eg: number matrix}, we point out an important observation that local spectrum is different from unbounded spectrum (the well-known spectrum of densely defined unbounded operator). Finally, we introduce the notion of unital locally $C^{\ast}$-algebra generated by a locally bounded normal operator and by using this we define continuous functional calculus, via local spectrum, of the locally bounded normal operator (see Definition \ref{defn: cont func calculus} and Theorem \ref{Thm: cont functional calculus properties}). We conclude this section by proving the (local) spectral mapping theorem.
 \begin{defn}\label{defn: proj system} \cite{gheondea}
     A pair $\big(\{{\mathcal{V}}_{\alpha}\}_{\alpha \in \Lambda},\{\phi_{\alpha,\beta}\}_{\alpha\leq\beta} \big)$ is said to be a projective (or inverse) system of locally convex spaces, if the following conditions are satisfied: 
     \begin{enumerate}
         \item ${(\Lambda, \leq) \text{ is a directed POSET}},$ 
         \item $\{{\mathcal{V}}_{\alpha}\}_{\alpha \in \Lambda}$ is a net of locally convex spaces,
         \item Whenever $\alpha \leq \beta$, the map $\phi_{\alpha, \beta} \colon {\mathcal{V}}_\beta \rightarrow {\mathcal{V}}_\alpha $ is a continuous surjective linear map such that $\phi_{\alpha, \alpha}$ is the identity map on   ${\mathcal{V}_\alpha} \text{ for all } \alpha \in \Lambda,$
         \item The following transitive property holds true: 
         \begin{equation*}
             {\phi}_{\alpha, \gamma} = {\phi}_{\alpha, \beta} \circ {\phi}_{\beta, \gamma}\;, \;\text{whenever}\; \alpha \leq \beta \leq \gamma.
         \end{equation*}
     \end{enumerate}
     Here the condition $(4)$ implies that the following commuting diagram holds true: 
         \begin{equation*}
    \begin{tikzcd}[every matrix/.append style={name=m},
  execute at end picture={
        \draw [<-] ([xshift=0mm,yshift=2mm]m-2-2.north) arc[start angle=-90,delta angle=270,radius=0.25cm];
  }]
    \mathcal{V}_{\beta} \arrow{rr}{\phi_{\alpha, \beta}} && \mathcal{V}_{\alpha} \\
    & \mathcal{V}_{\gamma} \arrow{lu}{\phi_{\beta,{\gamma}}} \arrow{ru}[swap]{\phi_{\alpha, \gamma}}
\end{tikzcd}
\end{equation*}
 \end{defn}
 \noindent A systematic construction of the ``projective limit'' is described in Section 1.1 of \cite{gheondea}. For the sake of completion, we give few details here. 
Suppose that $\big(\{{\mathcal{V}}_{\alpha}\}_{\alpha \in \Lambda},\{\phi_{\alpha,\beta}\}_{\alpha\leq\beta}\big)$ is a projective system of locally convex space, then  consider the vector space 

\begin{equation*}
    \prod\limits_{\alpha \in \Lambda} {\mathcal{V}}_{\alpha} = \big\{\{v_{\alpha}\}_{\alpha \in \Lambda} \colon v_\alpha \in {\mathcal{V}}_{\alpha} , \alpha \in \Lambda \big\},
\end{equation*} 
equipped with the product topology (i.e., the weakest topology on $ \prod\limits_{\alpha \in \Lambda} {\mathcal{V}}_{\alpha} $ such that the projection from $\left(\prod\limits_{\alpha \in \Lambda} {\mathcal{V}}_{\alpha}\right)$ to $ {\mathcal{V}}_\alpha$ is continuous for all $ \alpha \in \Lambda). $ The subspace given by 
\begin{equation} \label{Equation: Proj limit}
    \mathcal{V} : = \Big\{ \{v_{\alpha}\}_{\alpha \in \Lambda} \in \prod\limits_{\alpha \in \Lambda} {\mathcal{V}}_{\alpha} : \; \phi_{\alpha, \beta}(v_{\beta}) = v_{\alpha}\; \text{whenever}\; \alpha \leq \beta \Big\} \subseteq \prod\limits_{\alpha \in \Lambda} {\mathcal{V}}_{\alpha} 
\end{equation}
is equipped with the weak topology induced by the family $\{\phi_{\alpha}\}_{\alpha \in \Lambda}$ of maps  from $\mathcal{V}$ to ${\mathcal{V}}_\alpha$ defined by
\begin{equation*}
    \phi_{\alpha} \big(\{v_{\beta}\}_{\beta \in \Lambda}\big) = v_{\alpha}, \text{ for all } \alpha \in \Lambda .
\end{equation*}
Here the weak topology on $\mathcal{V}$ is the smallest topology such that each of these linear maps $\phi_\alpha$ continuous. Then the pair $\left(\mathcal{V}, \{\phi_\alpha\}_{\alpha \in \Lambda}\right)$ is called a {\bf projective limit} (or inverse limit) of locally convex spaces induced by the projective system $\left(\{\mathcal{V}_\alpha\}_{\alpha \in \Lambda}, \{\phi_{\alpha,\beta}\}_{\alpha \leq \beta}\right)$ and it is denoted by
 \begin{equation*}
     \mathcal{V} = \varprojlim_{\alpha \in \Lambda} \mathcal{V}_\alpha.
 \end{equation*}

Firstly, note that $\big(\mathcal{V}, \{\phi_\alpha\}_{\alpha \in \Lambda}\big)$ is \textit{compatible} with the projective system $\big(\{\mathcal{V}_\alpha\}_{\alpha \in \Lambda}, \{\phi_{\alpha,\beta}\}_{\alpha \leq \beta} \big)$, in the sense that
\begin{equation*}
    \phi_\alpha = \phi_{\alpha,\beta} \circ \phi_\beta,\; \text{whenever}\; \alpha \leq \beta.
\end{equation*}
Suppose $\mathcal{W}$ is a locally convex space and there exists a continuous linear map $\psi_\alpha \colon \mathcal{W} \rightarrow \mathcal{V}_\alpha,$ for each $\alpha \in \Lambda,$ such that the pair $\big(\mathcal{W},\{\psi_\alpha\}_{\alpha \in \Lambda}\big)$ is compatible with the projective system $\big(\{\mathcal{V}_\alpha\}_{\alpha \in \Lambda}, \{\phi_{\alpha,\beta}\}_{\alpha \leq \beta}\big).$ That is, $\psi_\alpha = \phi_{\alpha,\beta} \circ \psi_\beta,\; \text{whenever}\; \alpha \leq \beta.$ It follows that, for any $w \in \mathcal{W},$ the net $\{\psi_\alpha(w)\}_{\alpha \in \Lambda} \in \varprojlim\limits_{\alpha \in \Lambda} \mathcal{V}_\alpha.$ Further,
\begin{enumerate}
    \item there is a natural continuous linear map $\psi \colon \mathcal{W} \rightarrow \varprojlim\limits_{\alpha \in \Lambda} \mathcal{V}_\alpha$ defined by 
    \begin{equation*}
        \psi(w)=\big\{\psi_\alpha(w)\big\}_{\alpha \in \Lambda}, \;\text{for every}\; w \in \mathcal{W}
    \end{equation*}
    such that  $\psi_\alpha =\phi_\alpha \circ \psi,$ for each $\alpha \in \Lambda.$ That is, the following diagram commutes:
\begin{equation*}
\begin{tikzcd}[every matrix/.append style={name=m},
  execute at end picture={
        \draw [<-] ([xshift=0mm,yshift=2mm]m-2-2.north) arc[start angle=-90,delta angle=270,radius=0.25cm];
  }]
    \mathcal{W} \arrow{dr}[left]{\psi} \arrow{rr}{\psi_{\alpha}} && \mathcal{V}_{\alpha} \\
    & \varprojlim\limits_{\alpha \in \Lambda} \mathcal{V}_\alpha \arrow{ur}[right]{\phi_{\alpha}}
\end{tikzcd} \newline
\end{equation*}

\item Such a map $\psi$  is unique: if there is any other continuous linear map $\psi' \colon \mathcal{W} \rightarrow \varprojlim\limits_{\alpha \in \Lambda} \mathcal{V}_\alpha$ satisfying $\psi_\alpha =\phi_\alpha \circ \psi',$ for each $\alpha \in \Lambda,$ then 
\begin{equation*}
    \phi_\alpha\big(\psi'(w)\big)=\psi_\alpha(w), \; \text{for every} \; \alpha \in \Lambda,\, w \in \mathcal{W}.
\end{equation*}  
Since each $\phi_{\alpha}$ is a coordinate projection of $\varprojlim\limits_{\alpha \in \Lambda} \mathcal{V}_{\alpha}$ we get $\psi=\psi'.$
\end{enumerate}
This shows that, the projective limit is unique upto compatibility. 
Now we recall the notion of coherent map between projective limits of two projective systems of locally convex spaces. 
\begin{defn}\label{defn: coherent proj}
Let $(\Lambda,\leq)$ be a directed POSET. Suppose ($\mathcal{V}, \{\phi_\alpha\}_{\alpha \in \Lambda}$) and $(\mathcal{W}, \{\psi_\alpha\}_{\alpha \in \Lambda})$ are projective limits of projective systems $\left(\{\mathcal{V}_\alpha\}_{\alpha \in \Lambda}, \{\phi_{\alpha,\beta}\}_{\alpha \leq \beta}\right)$ and $\left(\{\mathcal{W}_\alpha\}_{\alpha \in \Lambda}, \{\psi_{\alpha,\beta}\}_{\alpha \leq \beta}\right)$ of locally convex spaces respectively. A linear map $f \colon \mathcal{V} \rightarrow \mathcal{W}$ is said to be \textit{coherent} if there exists a net $\{f_\alpha\}_{\alpha \in \Lambda}$ of linear maps from $\mathcal{V}_\alpha$ to $\mathcal{W}_\alpha$, such that $\psi_\alpha \circ f = f_\alpha \circ \phi_\alpha$ for all $\alpha \in \Lambda.$ Equivalently,

\begin{equation*}
    \begin{tikzcd}[every matrix/.append style={name=m},
    execute at end picture={
    \draw [<-] ([xshift=7mm,yshift=2mm]m-2-2.west) arc[start angle=-90,delta angle=270,radius=0.33cm]; }] 
    \mathcal{V}  \arrow[thick,swap] {dd}{\phi_\alpha} \arrow[thick]{rr}{f} && \mathcal{W}  \arrow[thick]{dd}{\psi_\alpha} \\
     && \\
    \mathcal{V}_\alpha \arrow[thick,swap]{rr}{f_\alpha} && \mathcal{W}_\alpha   
    \end{tikzcd}
\end{equation*}
\end{defn}
In the beginning of this section, a brief discussion on projective limit of projective system of locally convex spaces is presented. We use this notion to recall the definition of a locally $C^{\ast}$-algebra. 
\begin{defn}\label{defn: loc C*-alg}
Let $\mathcal{A}$ be a unital complex $\ast$-algebra. A seminorm $p$ on $\mathcal{A}$ is said to be a $C^{\ast}$-seminorm, if 
\begin{multicols}{2}
    \begin{enumerate}
        \item[(a)] $p(1_\mathcal{A}) = 1$\\
        \item[(b)] $p(ab) \leq p(a) p(b)$
        \item[(c)] $p(a^\ast) = p(a)$\\
        \item[(d)] $p(a^*a) = p(a)^2,$
    \end{enumerate}
\end{multicols}
\noindent for all $a,b \in \mathcal{A}$. Suppose that $(\Lambda, \leq)$ is a directed POSET, then 
\begin{enumerate}
    \item a family $\{p_{\alpha}\}_{\alpha \in \Lambda}$ of $C^{\ast}$-seminorms is called \textit{upward filtered}, if  for every $a \in \mathcal{A}$, we have $p_\alpha(a) \leq p_\beta(a),$ whenever $\alpha \leq \beta$ ;
    \item the unital $\ast$-algebra $\mathcal{A}$ is said to be a \textit{locally $C^\ast$-algebra} if it has a complete Hausdorff locally convex topology induced by an upward filtered family $\{p_\alpha\}_{\alpha\in \Lambda}$ of $C^{\ast}$-seminorms defined on $\mathcal{A}$.
\end{enumerate}
\end{defn}
Note that, in the literature, \textit{locally $C^{\ast}$-algebras} are also known as \textit{$LMC^{\ast}$-algebras} in \cite{schmudgen}, \textit{$b^{\ast}$-algebras} in \cite{AllanG}, \textit{pro $C^{\ast}$-algebras} in \cite{Phillips,voiculescu} and \textit{multinormed $C^{\ast}$-algebras} in \cite{dosiev}. In the following remark we construct locally $C^{\ast}$-algebra from a certain family of $C^{\ast}$-algebras.
\begin{rmk} \label{rmk: Invlimit}
    Let $(\Lambda, \leq)$ be a directed POSET and $\Big( \{\mathcal{B}_{\alpha}\}_{\alpha \in \Lambda},\; \{\phi_{\alpha, \beta}\}_{\alpha \leq \beta}\Big)$ be a projective system of unital $C^{\ast}$-algebras. That is, whenever $\alpha \leq \beta$ the map $\phi_{\alpha, \beta} \colon \mathcal{B}_{\beta} \to \mathcal{B}_{\alpha}$ is a unital surjective $C^{\ast}$-homomorphism,  satisfying the properties listed in Definition \ref{defn: proj system}. Following the Equation \eqref{Equation: Proj limit} we consider the projective limit,
    \begin{equation} \label{Equation: Proj limit C*}
        \varprojlim\limits_{\alpha \in \Lambda} \mathcal{B}_{\alpha} = \Big\{ \{x_{\alpha}\}_{\alpha \in \Lambda} \in \prod\limits_{\alpha \in \Lambda} \mathcal{B}_{\alpha}:\; \phi_{\alpha, \beta}(x_{\beta}) = x_{\alpha},\; \text{whenever}\; \alpha \leq \beta \Big\}.
    \end{equation}
    It is clear from earlier discussion that it is a linear space. Let $\{x_{\alpha}\}_{\alpha \in \Lambda},\; \{y_{\alpha}\}_{\alpha \in \Lambda} \in \varprojlim\limits_{\alpha \in \Lambda} \mathcal{B}_{\alpha}$, we define 
    \begin{equation*}
        \{x_{\alpha}\}_{\alpha \in \Lambda} \cdot \{y_{\alpha}\}_{\alpha \in \Lambda} = \{ x_{\alpha} y_{\alpha}\}_{\alpha \in \Lambda} \; \text{and} \; \{x_{\alpha}\}^{\ast}_{\alpha \in \Lambda} = \{x_{\alpha}^{\ast}\}_{\alpha \in \Lambda}.
    \end{equation*}
    Since $\phi_{\alpha, \beta} (x_{\beta}y_{\beta}) = \phi_{\alpha, \beta}(x_{\beta}) \phi_{\alpha, \beta}(y_{\beta}) = x_{\alpha}y_{\alpha} $ and $\phi_{\alpha, \beta}(x_{\beta}^{\ast}) = \phi_{\alpha, \beta}(x_{\beta})^{\ast} = x_{\alpha}^{\ast}$ whenever $\alpha \leq \beta$, it follows that $\varprojlim\limits_{\alpha \in \Lambda} \mathcal{B}_{\alpha}$ is a unital $\ast$-algebra. Suppose that for each $ \beta \in \Lambda$, if we define the semi-norm on $\varprojlim\limits_{\alpha \in \Lambda} \mathcal{B}_{\alpha}$ as,
    \begin{equation*}
        q_{\beta}\big(\{x_{\alpha}\}_{\alpha \in \Lambda}\big) : = \| x_{\beta} \|_{\mathcal{B}_{\beta}}
    \end{equation*}
    then by using the fact that $\phi_{\alpha, \beta}$ is contractive (whenever $\alpha \leq \beta$) we see that $\{q_{\alpha}\}_{\alpha \in \Lambda}$ is an upward filtered family of $C^{\ast}$-seminorms. Further, it follows that the locally convex topology induced by this family $\{q_{\alpha}\}_{\alpha \in \Lambda}$ is the weakest topology such that the map $\phi_{\alpha}\colon \varprojlim\limits_{\alpha \in \Lambda} \mathcal{B}_{\alpha} \to \mathcal{B}_{\alpha}$ given by $\{x_{\alpha}\}_{\alpha \in \Lambda} \mapsto x_{\alpha}$ continuous for every $\alpha \in \Lambda$.  Since each $\mathcal{B}_{\alpha}$ is complete, from \cite[Section 1.1]{gheondea} we conclude that $\varprojlim\limits_{\alpha \in \Lambda} \mathcal{B}_{\alpha}$ is complete with respect to the locally convex topology generated by an upward filtered family $\{q_{\alpha}\}_{\alpha \in \Lambda}$ of $C^{\ast}$-seminorms. Hence $\varprojlim\limits_{\alpha \in \Lambda} \mathcal{B}_{\alpha}$ is a locally $C^{\ast}$-algebra.
\end{rmk}
In the Remark \ref{rmk: Invlimit} we have shown that the projective limit of projective system of $C^{\ast}$-algebras is a locally $C^{\ast}$-algebras. The converse of the statment holds true, that is, every locally $C^{\ast}$-algebra can be seen as a projective limit of a projective system of $C^{\ast}$-algebras. We recall the main construction from \cite{gheondea}. Let $\mathcal{A}$ be a locally $C^{\ast}$-algebra and $\mathcal{P}(\mathcal{A})$ denotes the collection of all continuous \textit{C*}-seminorms on $\mathcal{A}.$ Here continuity is with respect to the Hausdorff locally convex topology mentioned in the above Definition \ref{defn: loc C*-alg}. Then $\mathcal{P}(\mathcal{A})$ is a directed POSET with respect to the partial order "$\leq$" given by 
\begin{equation*}
     p \leq q \text{ in } \mathcal{P}(\mathcal{A}) \text{ iff } p(a) \leq q(a), \text{ for all } a \in \mathcal{A}.
\end{equation*}
Without loss of generality, we consider the upward filtered family $\{p_\alpha\}_{\alpha \in \Lambda}$ of continuous $C^{\ast}$-seminorms, where $(\Lambda, \leq)$ is a directed POSET. 
 For each $\alpha \in \Lambda$, define
 \begin{equation*}
     \mathcal{I}_\alpha = \big\{a \in \mathcal{A} \colon p_\alpha(a) = 0\big\}.
 \end{equation*}
Clearly, $\mathcal{I}_\alpha$ is a closed two sided $\ast$-ideal in $\mathcal{A},$ and $\mathcal{A}_\alpha \colon = \mathcal{A}/\mathcal{I}_\alpha$ is a quotient $C^{\ast}$-algebra with respect to the $C^{\ast}$- norm $\|.\|_\alpha$ given by
\begin{equation*}
\|a+\mathcal{I}_\alpha\|_\alpha = p_\alpha(a), \text{ for all } a \in \mathcal{A}.
\end{equation*}
So, we get a family $\{\mathcal{A}_{\alpha}\}_{\alpha \in \Lambda}$ of $C^{\ast}$-algebras. Whenever $\alpha \leq \beta$, we see that $\mathcal{I}_{\beta} \subseteq \mathcal{I}_{\alpha}$ and there is a natural surjective $C^{\ast}$-homomorphism $\pi_{\alpha, \beta}\colon \mathcal{A}_{\beta} \to \mathcal{A}_{\alpha}$ defined by 
\begin{equation*}
    \pi_{\alpha, \beta}(a+\mathcal{I}_{\beta}) = a+\mathcal{I}_{\alpha},\; \text{for every }\; a \in \mathcal{A}.
\end{equation*}
Then $\big( \{\mathcal{A}_{\alpha}\}_{\alpha \in \Lambda},\; \{\pi_{\alpha, \beta}\}_{\alpha \leq \beta}\big)$ forms a projective system of $C^{\ast}$-algebras. Now, for each $\alpha \in \Lambda$, by considering the canonical quotient map $\pi_{\alpha}\colon \mathcal{A} \to \mathcal{A}_{\alpha}$ we see that the pair $\big( \mathcal{A},\; \{\pi_{\alpha}\}_{\alpha \in \Lambda}\big)$ is compatible with the projective system $\big( \{\mathcal{A}_{\alpha}\}_{\alpha \in \Lambda},\; \{\pi_{\alpha, \beta}\}_{\alpha \leq \beta}\big)$ since $\pi_{\alpha,\beta} \circ \pi_\beta(a)=\pi_\alpha(a), \; \text{whenever}\; \alpha \leq \beta.$

On the other hand, if we define $\varprojlim\limits_{\alpha \in \Lambda} \mathcal{A}_{\alpha}$ defined as in Equation \eqref{Equation: Proj limit C*}, then for each $\alpha \in \Lambda$ the map $\psi_{\alpha} \colon \varprojlim\limits_{\alpha \in \Lambda} \mathcal{A}_{\alpha} \to \mathcal{A}_{\alpha}$ given by 
\begin{equation*}
    \psi_{\alpha}\big(\{x_{\alpha}\}_{\alpha}\big) = x_{\alpha}, \; \text{for every}\; \{x_{\alpha}\}_{\alpha \in \Lambda} \in \varprojlim\limits_{\alpha \in \Lambda} \mathcal{A}_{\alpha}
\end{equation*}
is a surjective $\ast$-homomorphism such that for every $\{x_{\alpha}\}_{\alpha \in \Lambda}$, we have
\begin{equation*}
    \big(\pi_{\alpha, \beta} \circ \psi_{\beta}\big) (\{x_{\alpha}\}_{\alpha \in \Lambda}) = \pi_{\alpha, \beta} (x_{\beta}) = x_{\alpha} = \psi_{\alpha}(\{x_{\alpha}\}_{\alpha \in \Lambda}),\; \text{whenever}\; \alpha \leq \beta.
\end{equation*}
It shows that the pair $\big( \varprojlim\limits_{\alpha \in \Lambda} \mathcal{A}_{\alpha},\; \{\psi_{\alpha}\}_{\alpha \in \Lambda}\big)$ is compatible with the projective system $\big( \{\mathcal{A}_{\alpha}\}_{\alpha},\{\pi_{\alpha, \beta}\}_{\alpha \leq \beta}\big)$. Therefore, there exists a unique $\ast$-homomorphism $\psi \colon \mathcal{A} \to \varprojlim\limits_{\alpha \in \Lambda} \mathcal{A}_{\alpha} $ given by $a \mapsto \big\{ a+ \mathcal{I}_{\alpha}\big\}_{\alpha \in \Lambda}$ and satisfying,
\begin{equation*}
    (\psi_{\alpha}\circ \psi)(a) = \psi_{\alpha}(\{a+\mathcal{I}_{\alpha}\}) = a+\mathcal{I}_{\alpha} = \pi_{\alpha}(a),\; \text{for every}\; a \in \mathcal{A},\; \alpha \in \Lambda.
\end{equation*}
By the uniqueness (upto compatibility) of projective limit, we conclude that $\mathcal{A} = \varprojlim\limits_{\alpha \in \Lambda} \mathcal{A}_{\alpha}.$ Hence every locally $C^{\ast}$-algebra can be seen as a projective limit of projective system of $C^{\ast}$-algebras. 
\begin{note}
    In order to show a unital $\ast$-algebra is locally $C^{\ast}$- algebra, from the above discussion, it is enough to show that given $\ast$-algebra is compatible with some projective system of $C^{\ast}$-algebras.
\end{note}
  Now, we describe the notion of locally $C^{\ast}$-algebra with an example below.
\begin{eg}\label{Example: loc C* alg}
  Let $\Lambda = \mathbb{N}$ and let $\mathcal{A}_n = C([-n,n])$ be the unital $C^{\ast}$-algebra of complex valued  continuous functions on the interval $[-n,n]$, (for all $n \in \mathbb{N}$) with respect to supremum norm. For every $m \leq n,$ define $\phi_{m,n} : \mathcal{A}_n \rightarrow \mathcal{A}_m$ by
  \begin{equation*}
      \phi_{m,n}(f) = \restr{f}{[-m,m]}, \text{ for all } f \in \mathcal{A}_n
  \end{equation*}
  Clearly, $\phi_{m,n}$ is a surjective $C^\ast$-homomorphism of $C^{\ast}$-algebras and so $\big(\{\mathcal{A}_{n}\}_{n \in \mathbb{N}}, \{\phi_{m,n}\}_{m \leq n}\big)$ is a projective system.
Now we consider the unital $\ast$-algebra $C(\mathbb{R})$ of continuous functions on $\mathbb{R}$  and for each $n \in \mathbb{N},$ define the seminorm $p_n$ on $C(\mathbb{R})$ by
\begin{equation*}
    p_n(f)=\sup\big\{|f(t)| \colon t \in [-n,n]\big\},\text{ for all } f \in C(\mathbb{R}).
\end{equation*}
Here $p_m(f) \leq p_n(f),$ for all $f \in C(\mathbb{R}),$ whenever $m \leq n.$ Note that $C(\mathbb{R})$ is equipped with the locally convex topology given by the upward filtered family $\{p_n\}_{n \in \mathbb{N}}$ of $C^{\ast}$-seminorms. It follows that the map $\phi_n : C(\mathbb{R}) \rightarrow C([-n,n])$ defined by 
\begin{equation*}
    \phi_n(f) = \restr{f}{[-n,n]}, \text{ for all } f \in C(\mathbb{R}),
\end{equation*}
 is continuous for all $n \in \mathbb{N}.$ For which consider an open set $U$ given by 
 \begin{equation*}
     U=\big\{f \in C([-n,n]) \colon |f(x)|<r, \text{ for all }x \in [-n,n]\big\}
 \end{equation*}in $C([-n,n])$. Then
\begin{align*}
    \phi^{-1}_{n}(U)&=\big\{f \in C(\mathbb{R}) \colon |\phi_n(f)(t)|<r, \text{ for all }t\in[-n,n]\big\}\\
    &=\big\{f \in C(\mathbb{R}) \colon |f(t)|<r, \text{ for all }t\in[-n,n]\big\} \\
    &= p^{-1}_{n}\big(B(0,r)\big),
\end{align*}
where $B(0,r)$ is an open ball in $\mathbb{C}$ of radius $r$ centered at $0.$ Thus, $\phi^{-1}_{n}(U)$ is open in the locally convex topology and consequently $\phi_n$ is continuous for all $n \in \mathbb{N}.$ Further, whenever $m \leq n$, we have  
\begin{align*}
    \phi_{m,n} \circ \phi_n(f)
     = \phi_{m,n}(\restr{f}{[-n,n]})
     = \restr{f}{[-m,m]}
     = \phi_m(f),
\end{align*}
for all $f \in C(\mathbb{R}).$ So, the pair $\big(C(\mathbb{R}),\{\phi_n\}_{n \in \mathbb{N}}\big)$ is compatible with the projective system $\big(\{\mathcal{A}_{n}\}_{n \in \mathbb{N}} , \{\phi_{m,n}\}_{m \leq n}\big)$ of $C^{\ast}$-algebras. Hence  
$$C(\mathbb{R}) = \varprojlim\limits_{n \in \mathbb{N}} C([-n,n]). $$
Equivalently, $C(\mathbb{R})$ is a locally $C^{\ast}$-algebra.
\end{eg}
\begin{note}
     Every $C^{\ast}$-algebra is a locally $C^{\ast}$-algebra with respect to the $C^{\ast}$-norm. The converse may not be true. For instance in Example \ref{Example: loc C* alg} we have seen that $C(\mathbb{R})$ is a locally $C^{\ast}$-algebra but not a $C^{\ast}$-algebra.
\end{note}


\subsection{Inductive limit} In order to understand the notion of locally Hilbert space, we recall from 
\cite[Subsection 1.2]{gheondea} the concept of inductive limit of inductive system of locally convex spaces.  
\begin{defn}\label{defn: ind system}
A pair $\left(\{{\mathcal{X}}_{\alpha}\}_{\alpha \in \Lambda},\{\psi_{\beta, \alpha}\}_{\alpha\leq\beta}\right)$ is called an \textit{inductive system} of locally convex spaces if it satisfies the following conditions:
\begin{enumerate}
    \item[(a)] ${(\Lambda, \leq) \text{ is a directed POSET,}}$
    \item[(b)]  $\{{\mathcal{X}}_{\alpha}\}_{\alpha \in \Lambda}$ is a net of locally convex spaces, 
    \item[(c)]  Whenever $\alpha \leq \beta,$ the map $\psi_{\beta,\alpha}: {\mathcal{X}}_\alpha \rightarrow {\mathcal{X}}_\beta$ is a continuous linear map such that $\psi_{\alpha, \alpha}$ is the identity map on ${\mathcal{X}_\alpha} \text{ for all } \alpha \in \Lambda,$ 
    \item[(d)] The following transitive condition holds true:
\begin{equation*}
  \hspace{8pt} {\psi}_{{\gamma}, \alpha} = {\psi}_{{\gamma}, \beta} \circ {\psi}_{\beta, \alpha}, \text{whenever } \alpha \leq \beta \leq \up{\gamma}.
\end{equation*}
In other words, the following commuting diagram holds true:
\begin{equation*}
\begin{tikzcd}[every matrix/.append style={name=m},
  execute at end picture={
        \draw [<-] ([xshift=0mm,yshift=3mm]m-2-2.north) arc[start angle=-90,delta angle=270,radius=0.25cm];
  }]
    \mathcal{X}_{\alpha} \arrow{dr}[swap]{\psi_{{\gamma},\alpha}} \arrow{rr}{\psi_{\beta,\alpha}} && \mathcal{X}_{\beta} \arrow{dl}{\psi_{{\gamma}, \beta}}\\
    & \mathcal{X}_{{\gamma}}  
\end{tikzcd} \newline
\end{equation*}
\end{enumerate}
\end{defn}
An explicit description of the inductive limit of an inductive system is given below. Firstly, consider the direct sum, 
\begin{equation*}
   \bigoplus\limits_{\alpha \in \Lambda}\mathcal{X}_\alpha := \big\{\{x_\alpha\}_{\alpha \in \Lambda} \in \prod\limits_{\alpha \in \Lambda} \mathcal{X}_\alpha \colon \{x_\alpha\}_{\alpha \in \Lambda} \text{ is of finite support}\big\} \subseteq \prod\limits_{\alpha \in \Lambda} \mathcal{X}_\alpha
\end{equation*}
is endowed with the strongest locally convex topology such that the canonical inclusion maps $\mathcal{X}_\beta \hookrightarrow \bigoplus\limits_{\alpha \in \Lambda}\mathcal{X}_\alpha$ continuous, for all $\beta \in \Lambda.$ For $\xi \in \mathcal{X}_{\beta}$, we denote $\delta_{\xi}$ by the vector in $\bigoplus\limits_{\alpha \in \Lambda} \mathcal{X}_{\alpha}$ that has $\xi$ at $\beta$ and zero elsewhere. It follows that each $\mathcal{X}_{\alpha}$ is canonically identified with a subspace of $\bigoplus\limits_{\alpha \in \Lambda}\mathcal{X}_\alpha$ through the map $\mathcal{X}_\alpha \ni x_\alpha \mapsto \delta_{x_\alpha}.$  Now we consider the linear subspace $\mathcal{X}_0 \text{ of } \bigoplus\limits_{\alpha \in \Lambda}\mathcal{X}_\alpha$ defined by
\begin{equation} \label{equation: X0}
    \mathcal{X}_0 := span\big\{\delta_{x_\alpha} - \delta_{\psi_{\beta,\alpha}(x_\alpha)} \colon \alpha,\beta \in \Lambda, \alpha \leq \beta, x_\alpha \in \mathcal{X}_\alpha\big\}.
\end{equation}
It is immediate to see that $\mathcal{X}_{0}$ is a closed subspace of $ \bigoplus\limits_{\alpha \in \Lambda}\mathcal{X}_\alpha.$ The inductive limit is defined by the following quotient space,
\begin{equation} \label{Equation: indlimit}
    \varinjlim\limits_{\alpha \in \Lambda} \mathcal{X}_{\alpha}: = \Big( \bigoplus\limits_{\alpha \in \Lambda} \mathcal{X}_{\alpha}\Big) \big/ \mathcal{X}_{0}.
\end{equation}
Typically, vectors in $\varinjlim\limits_{\alpha \in \Lambda} \mathcal{X}_{\alpha}$ are cosets of the form $\{x_{\alpha}\}_{\alpha \in \Lambda} + \mathcal{X}_{0}$, where $\{x_{\alpha}\}_{\alpha} \in \bigoplus\limits_{\alpha \in \Lambda} \mathcal{X}_{\alpha}.$ For each $\alpha \in \Lambda$, there is a natural canonical linear map $\psi_\alpha : \mathcal{X}_\alpha \rightarrow \varinjlim\limits_{\alpha\in \Lambda}\mathcal{X}_\alpha$ defined by
\begin{equation*}
    \psi_{\alpha}(x_{\alpha}) = \delta_{x_{\alpha}} + \mathcal{X}_{0},\; \text{for every}\; x_{\alpha} \in \mathcal{X}_{\alpha}.
\end{equation*}
 The  topology on $ \varinjlim\limits_{\alpha\in \Lambda}\mathcal{X}_\alpha $ is the strongest locally convex topology such that the linear maps $\mathcal{\psi}_\alpha$ continuous, for all $\alpha \in \Lambda$. It is known as the inductive limit topology. Further, if $x_{\alpha} \in \mathcal{X}_{\alpha}$ and $\alpha \leq \beta$ then $\delta_{x_{\alpha}} - \delta_{\psi_{\beta, \alpha}(x_{\alpha})} \in \mathcal{X}_{0}$ and so, 
 \begin{align*}
    \psi_{\alpha}(x_{\alpha}) = \delta_{x_{\alpha}} + \mathcal{X}_{0} =  \delta_{\psi_{\beta, \alpha}(x_{\alpha})} + \mathcal{X}_{0}  = \psi_{\beta}\big( \psi_{\beta, \alpha}(x_{\alpha})\big) = \big(\psi_{\beta}\circ \psi_{\beta, \alpha}\big)(x_{\alpha}). 
 \end{align*}
 Since $x_{\alpha}$ is arbitrary in $\mathcal{X}_{\alpha}$, we conclude that 
 \begin{equation} \label{Equation: inductivecompatibility}
     \psi_{\alpha} = \psi_{\beta} \circ \psi_{\beta, \alpha},\; \text{whenever}\; \alpha \leq \beta.
 \end{equation}
 Therfore, the pair $\left(\varinjlim\limits_{\alpha \in \Lambda} \mathcal{X}_{\alpha}, \{\psi_\alpha\}_{\alpha \in \Lambda} \right)$ is \textit{compatible} with the inductive system $\left(\{{\mathcal{X}}_{\alpha}\}_{\alpha \in \Lambda},\{\psi_{\beta, \alpha}\}_{\alpha\leq\beta}\right)$ in the sense that Equation \eqref{Equation: inductivecompatibility} holds true. Equivalently, we have the following commuting diagram:
\begin{equation*}
\begin{tikzcd}[every matrix/.append style={name=m},
  execute at end picture={
        \draw [<-] ([xshift=0mm,yshift=2mm]m-2-2.north) arc[start angle=-90,delta angle=270,radius=0.25cm];
  }]
    \mathcal{X}_{\alpha} \arrow{dr}[swap]{\psi_{\beta,\alpha}} \arrow{rr} {\psi_{\alpha}} && \mathcal{X} \\
    & \mathcal{X}_{\beta} \arrow{ur}[swap]{\psi_{\beta}}
\end{tikzcd} \newline
\end{equation*}
 Let us consider $(\mathcal{Y},\{\kappa_\alpha\}_{\alpha \in \Lambda}),$ where $\mathcal{Y}$ is a locally convex space and $\kappa_\alpha \colon \mathcal{X}_\alpha \rightarrow \mathcal{Y}$ is a continuous linear map, for each $\alpha \in \Lambda$. Suppose that $(\mathcal{Y},\{\kappa_\alpha\}_{\alpha \in \Lambda})$ is compatible with the inductive system $\left(\{{\mathcal{X}}_{\alpha}\}_{\alpha \in \Lambda},\{\psi_{\beta, \alpha}\}_{\alpha\leq\beta}\right),$ which means   
    $\kappa_\alpha = \kappa_\beta \, \circ \, \psi_{\beta,\alpha}, \text{ whenever }\alpha \leq \beta,$ then we have the following observations.
    \begin{enumerate}
\item we can define a continuous linear map $\kappa \colon \varinjlim\limits_{\alpha \in \Lambda} \mathcal{X}_{\alpha} \to \mathcal{Y}$ by 
\begin{equation*}
    \kappa \big(\{x_{\alpha}\}_{\alpha} + \mathcal{X}_{0} \big) = \sum\limits_{\ell=1}^{n} \kappa_{\alpha_{\ell}}(x_{\alpha_{\ell}}),
\end{equation*}
where $\{\alpha_{\ell} \in \Lambda:\; 1 \leq \ell \leq n\}$ is the support of $\{x_{\alpha}\}_{\alpha \in \Lambda}$. Further, for each $\alpha \in \Lambda$ with $x_{\alpha} \in \mathcal{X}_{\alpha}$ the map satisfy that 
\begin{equation*}
    \big(\kappa \circ \psi_{\alpha}\big)(x_{\alpha}) = \kappa \big( \delta_{x_{\alpha}}+ \mathcal{X}_{0}\big) = \kappa_{\alpha}(x_{\alpha}).
\end{equation*}
Equivalently, we have the following commuting diagram.
\begin{equation*}
    \begin{tikzcd}[every matrix/.append style={name=m},
  execute at end picture={
        \draw [<-] ([xshift=0mm,yshift=2mm]m-2-2.north) arc[start angle=-90,delta angle=270,radius=0.25cm];
  }]
    \mathcal{Y} \arrow{rr} {\kappa} && \mathcal{X} \\
    & \mathcal{X}_{\alpha} \arrow{ul}{\kappa_{\alpha}} \arrow{ur}[swap]{\psi_{\alpha}}
\end{tikzcd} \newline
\end{equation*}
        \item The map $\kappa$ defined above is unique. Assume that there is another continuous linear map $\theta \colon \varinjlim\limits_{\alpha\in \Lambda}\mathcal{X}_\alpha \rightarrow \mathcal{Y} $ satisfying,
        \begin{equation}
            \theta \circ \psi_\alpha = \kappa_\alpha, \text{ for all } \alpha \in \Lambda.
        \end{equation} 
        If $\{\alpha_{\ell} \in \Lambda \colon 1 \leq \ell \leq n\}$ is the support of the vector $\{x_\alpha\}_{\alpha \in \Lambda}+\mathcal{X}_0 \in \varinjlim\limits_{\alpha\in \Lambda}\mathcal{X}_\alpha,$ then
\begin{align*}
 \theta\big(\{x_\alpha\}_{\alpha \in \Lambda}+\mathcal{X}_0\big) = \theta\left(\sum\limits^{n}_{\ell=1}\delta_{x_{\alpha_\ell}}+\mathcal{X}_0\right) &=\sum\limits^{n}_{\ell=1}\theta\big(\delta_{x_{\alpha_\ell}}+\mathcal{X}_0\big) \\
            &=\sum\limits^{n}_{\ell=1}\theta\big(\psi_{\alpha_\ell}(x_{\alpha_\ell})\big) \\
            &=\sum\limits^{n}_{\ell=1}\kappa_{\alpha_\ell}(x_{\alpha_\ell}) \\
            &=\kappa\big(\{x_\alpha\}_{\alpha \in \Lambda}+\mathcal{X}_0\big).
        \end{align*}
        Since $\{x_{\alpha}\}_{\alpha \in \Lambda}$ is arbitrary, we get that $\theta=\kappa.$
    \end{enumerate}
    
    \noindent Now, we conclude from observations $(1),(2)$ that, the inductive limit $\left(\varinjlim\limits_{\alpha \in \Lambda} \mathcal{X}_{\alpha}, \{\psi_\alpha\}_{\alpha \in \Lambda} \right)$ is unique upto compatibility. This means, if there is another pair $(\mathcal{Y},\{\kappa_\alpha\}_{\alpha \in \Lambda})$ compatible with the inductive system $\left(\{{\mathcal{X}}_{\alpha}\}_{\alpha \in \Lambda},\{\psi_{\beta, \alpha}\}_{\alpha\leq\beta}\right),$ then by Observation $(2),$ there is a unique continuous linear map $\kappa \colon \varinjlim\limits_{\alpha \in \Lambda} \mathcal{X}_{\alpha} \to \mathcal{Y}$ such that $\kappa \circ \psi_\alpha=\kappa_\alpha,$ for all $\alpha \in \Lambda.$

 The following definition describes certain class of maps known as {\it coherent map} between inductive limits of two inductive systems.  
 \begin{defn}\label{defn: coherent inj}
     Let $(\Lambda,\leq)$ be a directed POSET. Suppose ($\mathcal{X}, \{\psi_\alpha\}_{\alpha \in \Lambda}$) and $(\mathcal{Y}, \{\kappa_\alpha\}_{\alpha \in \Lambda})$ are inductive limits of inductive systems $(\{\mathcal{X}_\alpha\}_{\alpha \in \Lambda}, \{\psi_{\beta,\alpha}\}_{\alpha \leq \beta})$ and $(\mathcal{Y}_\alpha, \{\kappa_{\beta,\alpha}\}_{\alpha \leq \beta} )$ of locally convex spaces respectively. A linear map $g \colon \mathcal{X} \rightarrow \mathcal{Y}$ is said to be \textit{coherent} if there exists a net $\{g_\alpha\}_{\alpha \in \Lambda}$ of linear maps from $\mathcal{X}_\alpha$ to $\mathcal{Y}_\alpha$, such that $g\circ \psi_\alpha = \kappa_\alpha \circ g_\alpha$ for all $\alpha \in \Lambda.$ Equivalently,
 
\begin{equation*}
\begin{tikzcd}[every matrix/.append style={name=m},
    execute at end picture={
    \draw [<-] ([xshift=4.5mm,yshift=3mm]m-2-2.west) arc[start angle=-90,delta angle=270,radius=0.33cm]; }] 
\mathcal{X}  \arrow{rr}{g}&& \mathcal{Y} \\
&&\\
\mathcal{X}_\alpha \arrow{uu}{\psi_\alpha} \arrow[rr,swap,"g_\alpha"]  && \mathcal{Y}_\alpha \arrow[uu,swap,"\kappa_\alpha"]
\end{tikzcd}
\end{equation*}
\end{defn}
 Now we illustrate the notion of inductive limit with an example below. 
 \begin{eg}\label{example: ind lim eg} Let $\Lambda = \mathbb{N} $ and $C\big(\big[\frac{-1}{n},\frac{1}{n}\big]\big)$ denote the class of complex valued  continuous functions on $[\frac{-1}{n},\frac{1}{n}]$, for $n \in \mathbb{N}$.  For each $m \leq n$ the map $\psi_{n,m} \colon C\big(\big[\frac{-1}{m},\frac{1}{m}\big]\big) \rightarrow C\big([\frac{-1}{n},\frac{1}{n}]\big)$ given by
 \begin{equation*}
     \psi_{n,m}(f) = f\big|_{[\frac{-1}{n},\frac{1}{n}]}, \text{ for all } f \in C\big(\big[\frac{-1}{m},\frac{1}{m}\big]\big)
 \end{equation*}
 is a continuous linear map. Clearly, $\Big( \big\{ C\big(\big[\frac{-1}{n},\frac{1}{n}\big]\big)\big\}_{n \in \mathbb{N}},\; \{\psi_{n,m}\}_{m \leq n} \Big)$ is an inductive system of locally convex spaces. Now we compute its inductive limit. 
If we define $\psi_n \colon C\big([\frac{-1}{n},\frac{1}{n}]\big) \rightarrow \mathbb{C}$ by
 \begin{equation*}
     \psi_n(f) = f(0), \text{ for all } f \in C\big(\big[\frac{-1}{n},\frac{1}{n}\big]\big)
 \end{equation*}
 then $\psi_n$ is a continuous linear map, for all $n \in \mathbb{N}.$ Further, for $m \leq n$ we have $\psi_n \circ \psi_{n,m} = \psi_m.$  This shows that  $\big(\mathbb{C},\{\psi_{n}\}_{n \in \mathbb{N}}\big)$ is compatible with the inductive system $\Big( \big\{ C\big(\big[\frac{-1}{n},\frac{1}{n}\big]\big)\big\}_{n \in \mathbb{N}},\; \{\psi_{n,m}\}_{m \leq n} \Big)$ and hence $\big(\mathbb{C},\{\psi_{n}\}_{n \in \mathbb{N}}\big)$ is the inductive limit of the inductive system $\Big( \big\{ C\big(\big[\frac{-1}{n},\frac{1}{n}\big]\big)\big\}_{n \in \mathbb{N}},\; \{\psi_{n,m}\}_{m \leq n} \Big)$. Therefore, by the uniqueness of inductive limit (upto compatibility) we have 
 \begin{equation*}
     \varinjlim\limits_{n \in \mathbb{N}}C\big(\big[\frac{-1}{n},\frac{1}{n}\big]\big)=\mathbb{C}.
 \end{equation*}
  \end{eg}
\begin{note}
    In the Definition \ref{defn: ind system}, if we consider $\mathcal{X}_{\alpha} \subseteq \mathcal{X}_{\beta}$ and $\psi_{\beta, \alpha} \colon \mathcal{X}_{\alpha} \to \mathcal{X}_{\beta}$ is an inclusion map (i.e.,\; $\psi_{\beta, \alpha}(x) = x$ for all $x \in \mathcal{X}_{\alpha}$) whenever $\alpha \leq \beta$ then the inductive system $\left(\{{\mathcal{X}}_{\alpha}\}_{\alpha \in \Lambda},\{\psi_{\beta, \alpha}\}_{\alpha\leq\beta}\right)$ is called a {\it strict inductive system}. For a strict inductive system $\left(\{{\mathcal{X}}_{\alpha}\}_{\alpha \in \Lambda},\{\psi_{\beta, \alpha}\}_{\alpha\leq\beta}\right)$, we see that $\mathcal{X}_{0} = \{0\}$ from Equation \eqref{equation: X0} and by following Equation \eqref{Equation: indlimit}, we have 
  \begin{equation*}
     \varinjlim_{\alpha \in \Lambda}\mathcal{X}_\alpha = (\bigoplus_{\alpha \in \Lambda}\mathcal{X}_\alpha)/\mathcal{X}_0 = \bigcup_{\alpha \in \Lambda}\mathcal{X}_\alpha.
 \end{equation*}
\end{note} 
Now we recall the definition of locally Hilbert space \cite{gheondea}.  
\begin{defn}\label{defn: loc hilbert sp}
 A family $\{\mathcal{H}_\alpha\}_{\alpha \in \Lambda}$ of Hilbert spaces is said to be a strictly inductive system  if
  \begin{enumerate}
      \item[(a)] $(\Lambda, \leq)$ is a directed POSET,
      \item[(b)] $\{\mathcal{H}_\alpha\}_{\alpha \in \Lambda}$ is a net of Hilbert spaces where the inner product on $\mathcal{H}_\alpha$ is denoted by $ \langle\cdot,\cdot\rangle_{_\alpha},$ for $ \alpha \in \Lambda,$
      \item[(c)]  $\mathcal{H}_\alpha \subseteq \mathcal{H}_\beta,$ whenever $\alpha \leq \beta,$ 
      \item[(d)] Whenever $\alpha \leq \beta,$ the inclusion map $I_{\beta,\alpha} : \mathcal{H}_\alpha \rightarrow \mathcal{H}_\beta$ is isometric, i.e.
      \begin{equation*}
          \langle x,y \rangle_{\alpha} = \langle x,y \rangle_{\beta}, \text{ for all } x,y \in \mathcal{H}_\alpha.
      \end{equation*}
  \end{enumerate}
  \end{defn}
  For the strict inductive system $\{\mathcal{H}_\alpha\}_{\alpha  \in \Lambda}$ of Hilbert spaces, the inductive limit 
  \begin{equation*}
      \mathcal{D} = \displaystyle \varinjlim_{\alpha \in \Lambda}\mathcal{H}_\alpha = \bigcup_{\alpha \in \Lambda}\mathcal{H}_\alpha
      \end{equation*}
      is called a \textit{locally Hilbert space.} 
      
      In this work, we use the following terminology for locally Hilbert space, namely \textit{quantized domain} \cite[Definition 2.3]{SanthoshKP}. A quantized domain in a Hilbert space $\mathcal{H}$ is represented with a triple $\{\mathcal{H}; \mathcal{E}; \mathcal{D}\},$ where  $\mathcal{E}=\{\mathcal{H}_\alpha\}_{\alpha \in \Lambda}$ be an upward filtered family (or strictly inductive system) of closed subspaces of $\mathcal{H}$, $\mathcal{D}=\bigcup\limits_{\alpha \in \Lambda}\mathcal{H}_\alpha$ and $\mathcal{H}= \overline{\mathcal{D}}.$ The following example is motivated from \cite[Example 2.9]{SanthoshKP} and gives clear description of Frechet domain (when $\Lambda=\mathbb{N}$) of Hilbert spaces \cite{dosiev}.
   
  
  \begin{eg}\label{example: loc hilbert eg}
  Let $\mathcal{H} = \ell^{2}(\mathbb{N})$ and $\{e_{n}: n \in \mathbb{N}\}$ be an orthonormal basis of $\mathcal{H}$. Consider $\mathcal{H}_{n}:= span\{e_{1}, e_{2}, \cdots, e_{n}\},\; \forall \; n \in \mathbb{N}.$ Then each $\mathcal{H}_n$ is a finite dimensional Hilbert space (closed subspaces of $\ell^{2}(\mathbb{N})$) for all $n \in \mathbb{N}$. Further, $\mathcal{E} := \{\mathcal{H}_n \colon n\in \mathbb{N}\}$ is a strict inductive system or an upward filtered family of Hilbert spaces since $\mathcal{H}_n \subseteq \mathcal{H}_{n+1}, \text{ for all } n \in \mathbb{N}.$ Then the inductive limit is given by 
  \begin{equation*}
      \mathcal{D} = \displaystyle \bigcup_{n \in \mathbb{N}}\mathcal{H}_n = \text{span}\{e_i \colon i \in \mathbb{N}\},
  \end{equation*} 
  which is dense in $\ell^{2}(\mathbb{N}).$ Therefore, the triple $\big\{\ell^{2}(\mathbb{N});\mathcal{E};\mathcal{D}\big\}$ is a quantized domain in the Hilbert space  $\ell^{2}(\mathbb{N})$.
\end{eg}
Note that, every Hilbert space is a locally Hilbert space, but the converse need not be true (see Example \ref{example: loc hilbert eg}).\newline
We recall the notion of \textit{locally von Neumann algebra} using the notations of quantized domain. For a detailed discussion, the reader is directed to \cite{Joita}. 
\begin{defn}\label{defn: locally von neumann alg}
    Let $\{\mathcal{H}; \mathcal{E}; \mathcal{D}\}$ be a quantized domain associated to the Hilbert space $\mathcal{H}.$
    \begin{enumerate}
        \item If $\mathcal{L}(\mathcal{D})$ denotes the set of all linear operators on $\mathcal{D}$, \,then the $\ast$-algebra of all \textit{non-commutative continuous functions} on $\{\mathcal{H}; \mathcal{E}; \mathcal{D}\}$ is defined in \cite{dosiev} as,
\begin{equation*}
C^{\ast}_{\mathcal{E}}(\mathcal{D}): = \big\{ T \in \mathcal{L}(\mathcal{D}):\; P_{\mathcal{H}_\alpha}T \subseteq TP_{\mathcal{H}_\alpha}\; \&\; TP_{\mathcal{H}_\alpha} \in \mathcal{B}(\mathcal{H}_{\alpha}), \text{ for all } \alpha \in \Lambda \big\},
\end{equation*}
where $P_{\mathcal{H}_\alpha}$ is the projection of $\mathcal{H}$ onto $\mathcal{H}_\alpha$.

It is worth to mention that for every locally $C^{\ast}$-algebra $\mathcal{A},$ there exists a quantized domain $\{\mathcal{H}; \mathcal{E}; \mathcal{D}\}$ for which there is a local isometrical $\ast$-homomorphism from $\mathcal{A}$ to $C^{\ast}_{\mathcal{E}}(\mathcal{D})$ (see Theorem 7.2 of \cite{dosiev}).

        \item If $x\in \mathcal{D},$ then there exists $\alpha \in \Lambda$ such that $x \in \mathcal{H}_\alpha$ and $q_{x}(T)=\|T(x)\|_{\mathcal{H}_\alpha},$ for all $T \in C^{\ast}_{\mathcal{E}}(\mathcal{D})$ defines a seminorm.
        The \textit{strong operator topology} on $C^{\ast}_{\mathcal{E}}(\mathcal{D})$ is the locally convex topology induced by the family $\{q_x \colon x \in \mathcal{D}\}$ of seminorms.  
        \item A locally $C^{\ast}$-subalgebra of $C^{\ast}_{\mathcal{E}}(\mathcal{D})$ is called \textit{locally von Neumann algebra} if it is closed under the strong operator topology. An equivalent description is given in \cite[Proposiotion 3.14]{Joita} that, every locally von Neumann algebra is the projective limit of a projective system of von Neumann algebras.
    \end{enumerate}
     
\end{defn}
\begin{rmk}\label{rmk: locally bounded operator}
    If $\mathcal{E}=\{\mathcal{H}_\alpha \colon \alpha \in \Lambda\}$ and $\mathcal{D}=\displaystyle\bigcup_{\alpha \in \Lambda}\mathcal{H}_\alpha,$ then $\big(\{\mathcal{B}(\mathcal{H}_\alpha)\}_{\alpha \in \Lambda},\{\phi_{\alpha,\beta}\}_{\alpha \leq \beta}\big)$ is a projective system of $C^{\ast}$-algebras, where $\phi_{\alpha,\beta} \colon \mathcal{B}(\mathcal{H}_\beta) \rightarrow \mathcal{B}(\mathcal{H}_\alpha)$ is a restriction map, whenever $\alpha \leq \beta.$ Clearly, $\phi_{\alpha,\beta}$ is a $C^{\ast}$-representation. For each $\alpha,\; \phi_{\alpha}(T)=T\big|_{\mathcal{H}_\alpha}$ defines a $\ast$-homomorphism from $C^{\ast}_{\mathcal{E}}(\mathcal{D})$ to $\mathcal{B}(\mathcal{H}_\alpha)$ such that $\phi_{\alpha,\beta} \circ \phi_{\beta}=\phi_\alpha$ whenever $\alpha \leq \beta.$ In other words, $\left(C^{\ast}_{\mathcal{E}}(\mathcal{D}),\{\phi_\alpha\}_{\alpha \in \Lambda}\right)$ is compatible with $\left(\{\mathcal{B}(\mathcal{H}_\alpha)\}_{\alpha \in \Lambda},\{\phi_{\alpha,\beta}\}_{\alpha \leq \beta}\right).$ It follows that, $C^{\ast}_{\mathcal{E}}(\mathcal{D})=\varprojlim\limits_{\alpha \in \Lambda}\mathcal{B}(\mathcal{H}_\alpha)$ is a locally $C^{\ast}$-algebra. In view of this, \text{for every}\; $T \in  C^{\ast}_{\mathcal{E}}(\mathcal{D})$, we denote it by 
    \begin{equation*}
    T=\varprojlim\limits_{\alpha \in \Lambda}T_{\alpha},\; \text{where}\; T_{\alpha} = T\big|_{\mathcal{H}_{\alpha}},\; \alpha \in \Lambda.
    \end{equation*}
\end{rmk}
 \section{Character space of a locally $C^{\ast}$-algebra}\label{section: loc maximal ideal sp} \label{Section 2}
 In this section, our aim is to define the character space of a commutative unital locally $C^{\ast}$-algebra and study its topological properties. In the literature, the authors of \cite{michael,schmudgen} have made a few remarks about the spectrum of an element of a locally $C^{\ast}$-algebra. However, there is no explicit description of character space in this setting. Here we define the class and describe it in full details. 
 
 Let $\mathcal{A}$ be a commutative unital locally $C^{\ast}$-algebra and $\{p_{\alpha}\}_{\alpha \in \Lambda}$ be a family of $C^{\ast}$-seminorms. Then by taking, $\mathcal{I}_{\alpha}:= \{a\in \mathcal{A}:\; p_{\alpha}(a)=0\}$ and $\mathcal{A}_{\alpha}:= \mathcal{A}/\mathcal{I}_\alpha, $ we have seen that $\mathcal{A}=\varprojlim\limits_{\alpha\in \Lambda}\mathcal{A}_\alpha,$ the projective limit of commutative unital $C^{\ast}$-algebras $\{\mathcal{A}_{\alpha}\}_{\alpha \in \Lambda}$. Recall that the projective limit topology on $\mathcal{A}$ is the smallest locally convex topology on $\mathcal{A}$ such that the quotient map 
 $\pi_\alpha \colon \mathcal{A} \rightarrow \mathcal{A}_\alpha$ is continuous, for all $\alpha \in \Lambda.$ From now onwards, $\mathcal{A}$ denotes a commutative unital locally $C^{\ast}$-algebra unless it is specified otherwise.
 
 For each $\alpha \in \Lambda,$ the maximal ideal space of $\mathcal{A}_{\alpha}$ is denoted by
 \begin{equation*}
     \mathcal{M}_{\mathcal{A}_\alpha} = \big\{\varphi_\alpha : \mathcal{A}_\alpha \rightarrow \mathbb{C} \colon \varphi_\alpha \text{ is multiplicative and linear}\big\}.
 \end{equation*}
Recall that, $\mathcal{M}_{\mathcal{A}_\alpha} $ is a non-empty weak$^{\ast}$-compact in $\mathcal{A}^{\ast}_\alpha$ and it has one to one correspondence with all maximal ideals of $\mathcal{A}_\alpha$ (for details, see \cite{kehezhu}). Before we propose the notion of character space, let us understand the behaviour of multiplicative linear functional defined on $\mathcal{A}.$ Firstly, note that for any $\alpha \in \Lambda$ and $\varphi_{\alpha} \in \mathcal{M}_{\mathcal{A}_{\alpha}}$, the map $\varphi_{\alpha} \circ \pi_{\alpha}$ is a multiplicative linear functional on $\mathcal{A}$ and it is continuous since both $\varphi_{\alpha}$ and $\pi_{\alpha}$ are continuous. This shows that the collection of multiplication linear functionals on $\mathcal{A}$ is non-empty. Precisely, each multiplicative linear functional on a commutative unital $C^{\ast}$-algebra $\mathcal{A}_{\alpha}$ induces a continuous multiplicative linear functional on the commutative unital locally $C^{\ast}$-algebra $\mathcal{A}.$ 

Note that, a multiplicative linear functional on $\mathcal{A}$  is not necessarily continuous. We give an example below. 
\begin{eg}\label{eg: Phi is not continuous}
    Let $\mathcal{A} = C\big([0, \infty)\big)$ denote the class of complex valued continuous functions on $[0, \infty)$ and let $\Lambda = \mathbb{N}.$ Consider that $\mathcal{A}$ is equipped with the locally convex topology induced by the family $\{p_{n}\}_{n \in \mathbb{N}}$ of seminorms, where 
    \begin{equation*}
        p_{n}(f) = \sup\left\{ |f(t)|:\; \frac{1}{2} \leq t \leq n\right\}.
    \end{equation*}
    Here $\mathcal{A}$ is a locally convex commutative unital $\ast$-algebra. Since for every $f \in \mathcal{A},$
    \begin{equation*}
        p_{n}(f^{\ast}f) = \sup\left\{ |f(t)|^{2}:\; \frac{1}{2} \leq t \leq n\right\} = \sup\left\{ |f(t)|:\; \frac{1}{2} \leq t \leq n\right\}^{2} = P_{n}(f)^{2}
    \end{equation*} 
    and whenever $m \leq n$, we have 
    \begin{equation*}
        p_{m}(f) = \sup\left\{ |f(t)|:\; \frac{1}{2} \leq t \leq m\right\} \leq  \sup\left\{ |f(t)|:\; \frac{1}{2} \leq t \leq n\right\} = p_{n}(f),
    \end{equation*}
    it follows that $\{p_{n}\}_{n \in \mathbb{N}}$ is an upward filtered family of $C^{\ast}$-seminorms on $\mathcal{A}.$ Firstly, note that for $m \leq n,$ the map $\phi_{m,n}\colon C\left([\frac{1}{2}, n]\right) \to C\left([\frac{1}{2}, m]\right)$ given by $f \mapsto f\big|_{[\frac{1}{2}, m]}$ is a surjective $\ast$-homomorphism of $C^{\ast}$-algebras. It is easy to see that $\left(\big\{C\left([\frac{1}{2},n]\right)\big\}_{n \in \mathbb{N}},\{\phi_{m,n}\}_{m \leq n}\right)$ forms a projective system of commutative unital $C^{\ast}$-algebras. Similarly, for each $n \in \mathbb{N}$ we define $\phi_{n} \colon \mathcal{A} \to C\left([\frac{1}{2},n]\right)$ by 
    \begin{equation*}
        \phi_{n}(f) = f\big|_{[\frac{1}{2}, n]},\; \text{for all}\; f \in \mathcal{A}
    \end{equation*}
    is a continuous $\ast$-homomorphism satisfying, $\phi_{m,n}\circ \phi_{n} = \phi_{m}$ for $m \leq n.$ That is, $\left( \mathcal{A}, \{\phi_{n}\}_{n\in \mathbb{N}} \right)$ is compatible with the projective system $\left(\big\{C[\frac{1}{2},n]\big\}_{n \in \mathbb{N}},\{\phi_{m,n}\}_{m \leq n}\right).$ Therefore, $\mathcal{A}$ is a commutative unital locally $C^{\ast}$-algebra. 

    Now consider the map $\Phi\colon \mathcal{A} \to \mathbb{C}$ given by $\Phi(f) = f(0)$, for all $f \in \mathcal{A}$. Clearly, $\Phi$ is a multiplicative linear functional. We claim that $\Phi$ is not continuous. For this, let us consider the open ball $B\left(0, \frac{1}{2}\right)$ of radius $\frac{1}{2}$ around $0$ in $\mathbb{C}$ and see that 
    \begin{equation*}
        \Phi^{-1}\left( B \left(0, \frac{1}{2}\right) \right) = \left\{ x \in \mathcal{A}:\; |\Phi(x)|< \frac{1}{2}\right\}.
    \end{equation*}
    Now we show that $ \Phi^{-1}\left( B \left(0, \frac{1}{2}\right) \right)$ is not open in Projective limit topology. For every $\ell \in \mathbb{N}$, define $g_{\ell} \colon [0, \infty) \to \mathbb{C} $ by 
    \begin{equation*}
        g_{\ell}(t) = \frac{1}{2 + t\ell^{2}}, \; \text{for}\; 0 \leq t < \infty.
    \end{equation*}
    Note that each $g_{\ell}$ is a continuous decreasing function. For every $n \in \mathbb{N},$ we have 
    \begin{equation*}
        p_{n}(g_{\ell}) = \sup\left\{ |g_{\ell}(t)|:\; \frac{1}{2} \leq t \leq n\right\} = g_{\ell}\left(\frac{1}{2}\right) = \frac{2}{4+\ell^{2}} < \frac{1}{\ell}.
    \end{equation*}
    Further, $g_{\ell}(0) = \frac{1}{2}$ and so, $g_{\ell} \notin  \Phi^{-1}\left( B \left(0, \frac{1}{2}\right) \right).$ In summary, we have shown that for every $\ell, n \in \mathbb{N}$ there is a $g_{\ell} \in p_{n}^{-1}\left( B(0, \frac{1}{\ell})\right)$ such that $g_{\ell} \notin  \Phi^{-1}\left( B \left(0, \frac{1}{2}\right) \right).$ Equivalently, $p_{n}^{-1}\left( B(0, \frac{1}{\ell})\right)$ is not entirely contained in $\Phi^{-1}\left( B \left(0, \frac{1}{2}\right) \right)$ for every $\ell, n \in \mathbb{N}$. Hence $\Phi$ is not continuous. 
    \end{eg}
In the following graph, we draw the functions $g_{1}, g_{2}, g_{3}$ that are defined in Example \ref{eg: Phi is not continuous} to display their nature. 
    \begin{equation*}
\begin{tikzpicture}
    \begin{axis}[xmin=0, xmax=6, ymin=0, ymax= 1/2, axis lines=left, xlabel=$x$, ylabel=$y$,]
        \addplot[color=red]{1/(2+x)}
        node[right,pos=1]{$g_1(x)$};
        \addplot[color=blue]{1/(2+4*x)}
        node[right,pos=1]{$g_2(x)$};
        \addplot[color=brown]{1/(2+9*x)}
        node[right,pos=1]{$g_{3}(x)$};
    \end{axis}
\end{tikzpicture}
\end{equation*}
Now we show that certain continuous multiplicative linear functional on a commutative unital locally $C^{\ast}$-algebra must be induced from multiplicative linear functional on some quotient $C^{\ast}$-algebra. 
\begin{thm}\label{thm: Phi associated to varphialpha}
    Let $\mathcal{A}$ be a commutative unital locally $C^{\ast}$-algebra and $\Phi \colon \mathcal{A} \rightarrow \mathbb{C}$ be a multiplicative linear functional. If $\Phi$ is continuous and for some $\alpha_{0} \in \Lambda$, 
    \begin{equation} \label{Eq: sup Phi = 1}
        \sup\big\{ |\Phi(x)|:\; x\in \mathcal{A}, \; p_{\alpha_{0}}(x)<1\big\} = 1.
    \end{equation}
    then the map given by $\varphi_{\alpha_{0}}\left( \pi_{\alpha_{0}}(x)\right) = \Phi(x),\; x \in \mathcal{A},$ is a well-defined multiplicative linear functional on $\mathcal{A}_{\alpha_{0}}$. In other words,  $\Phi = \varphi_{\alpha_{0}} \circ \pi_{\alpha_{0}}.$
\end{thm}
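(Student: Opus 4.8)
The plan is to reduce the entire statement to a single pointwise estimate. Observe that $\varphi_{\alpha_0}$ is being defined on $\mathcal{A}_{\alpha_0} = \mathcal{A}/\mathcal{I}_{\alpha_0}$ through the quotient map $\pi_{\alpha_0}$, so the only thing that can fail is well-definedness: if $\pi_{\alpha_0}(x) = \pi_{\alpha_0}(y)$, i.e.\ $x - y \in \mathcal{I}_{\alpha_0}$, we must have $\Phi(x) = \Phi(y)$. Equivalently, it suffices to prove that $\Phi$ annihilates the ideal $\mathcal{I}_{\alpha_0} = \{z \in \mathcal{A} : p_{\alpha_0}(z) = 0\}$. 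I would extract this from hypothesis \eqref{Eq: sup Phi = 1} by upgrading the supremum bound to the pointwise estimate $|\Phi(x)| \le p_{\alpha_0}(x)$ for every $x \in \mathcal{A}$.

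The main step, and the only place genuine work is needed, is this upgrade, which is a homogeneity/scaling argument. For $x$ with $p_{\alpha_0}(x) \ne 0$ and any $\varepsilon > 0$, the element $x/(p_{\alpha_0}(x) + \varepsilon)$ has $p_{\alpha_0}$-value strictly below $1$, so \eqref{Eq: sup Phi = 1} gives $|\Phi(x)| \le p_{\alpha_0}(x) + \varepsilon$; letting $\varepsilon \to 0$ yields $|\Phi(x)| \le p_{\alpha_0}(x)$. For $x$ with $p_{\alpha_0}(x) = 0$ one scales instead by arbitrarily large $t > 0$: since $p_{\alpha_0}(tx) = 0 < 1$ we get $t\,|\Phi(x)| \le 1$ for all $t$, forcing $\Phi(x) = 0$. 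In particular $\Phi$ vanishes on $\mathcal{I}_{\alpha_0}$, which is precisely the well-definedness required. Note that this estimate also re-proves continuity of $\Phi$; the role of the precise value $1$ is to make the induced functional contractive, consistent with $p_{\alpha_0}(x) = \|\pi_{\alpha_0}(x)\|_{\alpha_0}$.

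With well-definedness in hand, the remaining verifications are routine and use only that $\pi_{\alpha_0}$ is a surjective unital $\ast$-homomorphism. Surjectivity lets me write an arbitrary element of $\mathcal{A}_{\alpha_0}$ as $\pi_{\alpha_0}(x)$, and then linearity and multiplicativity of $\varphi_{\alpha_0}$ follow by transporting the corresponding identities for $\Phi$ across $\pi_{\alpha_0}$, e.g.\ $\varphi_{\alpha_0}(\pi_{\alpha_0}(x)\pi_{\alpha_0}(y)) = \varphi_{\alpha_0}(\pi_{\alpha_0}(xy)) = \Phi(xy) = \Phi(x)\Phi(y)$. Finally, $\varphi_{\alpha_0}$ is nonzero because $\pi_{\alpha_0}(1_{\mathcal{A}}) = 1_{\mathcal{A}_{\alpha_0}}$ and $\Phi(1_{\mathcal{A}}) = 1$, so $\varphi_{\alpha_0} \in \mathcal{M}_{\mathcal{A}_{\alpha_0}}$, and the factorization $\Phi = \varphi_{\alpha_0} \circ \pi_{\alpha_0}$ holds by construction. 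I expect essentially no obstacle beyond the scaling argument; the subtlety worth flagging is the separate treatment of the degenerate case $p_{\alpha_0}(x) = 0$, where one must scale up rather than normalize.
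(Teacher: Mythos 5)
Your proposal is correct and follows essentially the same route as the paper: both arguments hinge on the scaling observation that $p_{\alpha_{0}}(x)=0$ forces $\Phi(x)=0$ (the paper scales by $n\in\mathbb{N}$, you by arbitrary $t>0$), which gives well-definedness of $\varphi_{\alpha_{0}}$ on the quotient, after which multiplicativity and the factorization $\Phi=\varphi_{\alpha_{0}}\circ\pi_{\alpha_{0}}$ are transported through the surjection $\pi_{\alpha_{0}}$ exactly as in the paper. Your extra step of upgrading the hypothesis to the pointwise bound $|\Phi(x)|\le p_{\alpha_{0}}(x)$ is a harmless strengthening of what the paper records via the supremum identity for $\varphi_{\alpha_{0}}$, not a different method.
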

\begin{proof} From the hypothesis, it is clear that $|\Phi(x)|\leq 1$ whenever $p_{\alpha_{0}}(x)<1$. Assume that $p_{\alpha_{0}}(x) = 0$, then for any $n \in \mathbb{N}$ we have $p_{\alpha_{0}}(nx) = 0 <1$. It follows that 
\begin{equation*}
    |\Phi(x)| = \frac{1}{n} |\Phi(nx)| \leq \frac{1}{n}, \; \text{for every}\; n \in \mathbb{N}.  
\end{equation*}
So, $\Phi(x) = 0.$ The map $\varphi_{\alpha_{0}}:\; \mathcal{A}_{\alpha_{0}} \to \mathbb{C}$ given by 
\begin{equation*}
    \varphi_{\alpha_{0}}(\pi_{\alpha_{0}}(x)) = \Phi(x),\; \text{for all}\; x \in \mathcal{A}
\end{equation*}
is well-defined linear map. Further, 
\begin{equation*}
    \varphi_{\alpha_{0}}\left(\pi_{\alpha_{0}}(x) \pi_{\alpha_{0}}(y) \right) = \varphi_{\alpha_{0}} \left( \pi_{\alpha_{0}}(xy)\right) = \Phi(xy) = \phi(x) \Phi(y) =\varphi_{\alpha_{0}}\left(\pi_{\alpha}(x) \right) \;\varphi_{\alpha_{0}}\left(\pi_{\alpha}(y) \right),
\end{equation*}
for any $x,y \in \mathcal{A}$ and 
\begin{equation*}
    \sup\big\{\left|\varphi_{\alpha_{0}}\left(\pi_{\alpha_{0}}(x) \right) \right|:\; x \in \mathcal{A},\; \|\pi_{\alpha_{0}}(x)\| <1 \big\} = \sup\big\{ |\Phi(x)|:\; x\in \mathcal{A}, \; p_{\alpha_{0}}(x)<1\big\} = 1.
\end{equation*}
Therefore, $\varphi_{\alpha_{0}} \in \mathcal{M}_{\mathcal{A}_{\alpha_{0}}}.$
\end{proof}
\begin{rmk}
    If $(\Lambda, \leq)$ is a totally ordered set, the the condition given in Equation \ref{Eq: sup Phi = 1} of Theorem \ref{thm: Phi associated to varphialpha} is redundant. That is, for a continuous multiplicative linear functional $\Phi$ defined on a commutative unital locally $C^{\ast}$-algebra $\mathcal{A}$, there exists an $\alpha_{0} \in \Lambda$ such that Equation \ref{Eq: sup Phi = 1} superfluous. This can be seen as follows: since $\Phi$ is continuous, there exists an $\epsilon > 0$ and $\{\alpha_{1}, \alpha_{2}, \cdots, \alpha_{N}\} \subset \Lambda$ satisfying
    \begin{equation*}
        \bigcap\limits_{i=1}^{N} p_{\alpha_{i}}^{-1}\left( B(0, \epsilon)\right) \subseteq \Phi^{-1}\left(B(0,1) \right).
    \end{equation*}
    Since $\Lambda$ is a totally ordered set, by choosing $\alpha_{0} = \min\{\alpha_{{1}}, \alpha_{{2}}, \cdots, \alpha_{{N}}\}$ we have 
    \begin{equation}\label{Eq: epsilon 1}
        |\Phi(x)|<1\; \text{ whenever} \; p_{\alpha_{0}}(x) < \epsilon.
    \end{equation}
    Equivalently, $\Phi\left( p_{\alpha_{0}}^{-1}\left(B(0, \epsilon)\right)\right)$ is bounded in $\mathbb{C}.$ Now we claim that Equation \ref{Eq: sup Phi = 1} holds true. If $\epsilon \geq 1$ then the result follows from Equation \eqref{Eq: epsilon 1}. Suppose that  $\epsilon <1$ and there exists an $x \in \mathcal{A}$ such that $p_{\alpha_{0}}(x)<1$ but $|\Phi(x)|>1$ then 
    \begin{equation*}
        p_{\alpha_{0}}\left( \epsilon x^{n}\right) = \epsilon p_{\alpha_{0}}\left( x^{n}\right) \leq \epsilon p_{\alpha_{0}}(x)^{n} < \epsilon,
    \end{equation*}
     but the sequence $\{\Phi(\epsilon x^{n})\}_{n \in \mathbb{N}}$ is not bounded since $|\Phi(\epsilon x^{n})| = \epsilon |\Phi(x)|^{n}.$ Therefore, Equation \eqref{Eq: epsilon 1} holds true for $\epsilon>0$. Hence $\varphi_{\alpha_{0}}$ defined as in Theorem \ref{thm: Phi associated to varphialpha} is in $\mathcal{M}_{\mathcal{A}_{\alpha_{0}}}$ and $\Phi = \varphi_{\alpha_{0}}\circ\pi_{\alpha_{0}}.$    
     \end{rmk}
It is worth to point out that even when a multiplicative linear functional $\Phi \colon \mathcal{A} \to \mathbb{C}$ is continuous, the map $\varphi_{\alpha}$ given in Theorem \ref{thm: Phi associated to varphialpha} may not be well defined for every $\alpha \in \Lambda.$ We illustrate this situation with the an example below. 
\begin{eg}\label{eg: Phi not associated to varphi1}
    Let us consider the commutative unital locally $C^{\ast}$- algebra $C(\mathbb{R}).$ As described in Example \ref{Example: loc C* alg}, we see that
    \begin{equation*}
        C(\mathbb{R})= \varprojlim\limits_{n \in \mathbb{N}} C\left([-n, n]\right)
    \end{equation*}
    In fact, $\mathcal{A}$ is equipped with the locally convex topology induced by the upward filtered family $\{p_n\}_{n \in \mathbb{N}}$ of $C^{\ast}$-seminorms, where
    \begin{equation*}
        p_n(f)=\sup\big\{|f(t)| \colon -n \leq t \leq n\big\}.
    \end{equation*}
    Following Remark \ref{rmk: Invlimit}, there is a canonical quotient map $\pi_n \colon C(\mathbb{R}) \rightarrow C(\mathbb{R})/\mathcal{I}_n$ is continuous for each $n \in \mathbb{N}$. Here $\mathcal{I}_n=\big\{f \in C(\mathbb{R}) \colon p_n(f)=0\big\}$ is a closed two sided $\ast$-ideal of $\mathcal{A}$ for all $n \in \mathbb{N}.$ Now define $\Phi \colon C(\mathbb{R}) \rightarrow \mathbb{C}$ by
    \begin{equation*}
        \Phi(f)=f(2), \text{ for all } f \in C(\mathbb{R}).
    \end{equation*}
    Then $\Phi$ is clearly a multiplicative linear functional on $C(\mathbb{R}).$ Further, if we define $\varphi_{2} \colon C(\mathbb{R})/\mathcal{I}_{2} \to \mathbb{C}$ by 
    \begin{equation*}
        \varphi_{2}\left( \pi_{2}(f)\right) =   f(2), \; \text{for all}\; f \in C(\mathbb{R})
    \end{equation*}
    then $\varphi_{2}$ a multiplicative linear functional on the commutative unital $C^{\ast}$-algebra $C(\mathbb{R})/\mathcal{I}_{2}$ and so, $\Phi = \varphi_{2} \circ \pi_{2}$ is continuous. Indeed, the map $\varphi_{n}\colon C(\mathbb{R})/\mathcal{I}_{n} \to \mathbb{C}$ given by $ \varphi_{n}\big( \pi_{n}(f)\big) = \Phi(f)$ is well-defined and $\Phi = \varphi_{n}\circ \pi_{n}$ for $n\geq 2.$ However, we show that $\varphi_{1}\colon C(\mathbb{R})/\mathcal{I}_{1} \to \mathbb{C}$ given by $\varphi_1\left(\pi_1(f)\right)=f(2)$ is not well defined. For if consider the following two continuous functions, 
    \begin{equation*}
        f(t)=t,\text{ for all }t \in \mathbb{R}
    \end{equation*}
    and
    \begin{equation*}
  g(t)=\begin{cases}
    -1, & \text{if $t\leq-1$}.\\
    t, & \text{if $-1 \leq t \leq 1$}\\
    1, & \text{if $t \geq 1$}.
  \end{cases}
\end{equation*}
It is clear that $p_{1}(f-g) = \sup\big\{ |(f-g)(t)|:\; -1 \leq t \leq 1\big\} = 0$, that is, $\pi_{1}(f) = \pi_{1}(g)$ but $ \Phi(f) = 2 \neq 1 =  \Phi(g).$ Hence $\varphi_{1}\left( \pi_{1}(f)\right) = \Phi(f)$ is not well-defined. Equvalently, $\Phi \neq \varphi_{1}\circ \pi_{1}.$
\end{eg}
We draw the functions $f$ and $g$ defined in the Example \ref{eg: Phi not associated to varphi1} here. We point out that any two continuous functions coincide on the interval $[-1, 1]$ and differ at the point $2$ will serve as an example. 
\begin{equation*}
    \begin{tikzpicture}[
  declare function={
    func(\x)= (\x <= -1) * (-1)   +
              and(\x >=-1, \x <= 1) * (x)     +
              (\x >= 1) * (1)
   ;
  }
]
\begin{axis}[
  axis x line=middle, axis y line=middle,
  ymin=-3, ymax=3, ytick={-3,...,3}, ylabel=$y$,
  xmin=-3, xmax=3, xtick={-3,...,3}, xlabel=$t$,
  domain=-3:3,samples=100, 
]
\addplot [red,thick] {func(x)};
\addplot[blue,thick]{\x};
\node at (2.5,1.2) {$g(t)$};
\node at (2,2.5) {$f(t)$};
\end{axis}
\end{tikzpicture} 
\end{equation*}
\\

 Now we are in a situation to define the character space of a commutative unital locally $C^{\ast}$-algebra. In view of Theorem \ref{thm: Phi associated to varphialpha} we consider the multiplicative linear functional that are induced from the multiplicative linear functional on the quotient $C^{\ast}$-algebra. We give the formal definition below. 

\begin{defn}\label{defn: characterspace}
    Let $\mathcal{A}$ be a commutative unital locally $C^{\ast}$-algebra and let $(\Lambda, \leq)$ be a directed POSET. For each $\alpha \in \Lambda,$ the quotient algebra $\mathcal{A}_{\alpha}$ is a commutative unital $C^{\ast}$-algebra and $\pi_{\alpha}\colon \mathcal{A} \to \mathcal{A}_{\alpha}$ is a continuous cononical quotient map. We denote the character space of $\mathcal{A}$ by $\mathcal{M}_{\mathcal{A}}$ and define as,
    \begin{equation*}
     \mathcal{M}_\mathcal{A}=\Big\{\Phi \colon \mathcal{A} \rightarrow \mathbb{C}\; \text{is multiplicative linear map}\;:\; \Phi=\varphi_{\alpha} \circ \pi_\alpha, \text{ for some } \varphi_{\alpha} \in \mathcal{M}_{\mathcal{A}_{\alpha}},\; \alpha \in \Lambda\Big\}.
 \end{equation*}
\end{defn}  
It is evident from Definition \ref{defn: characterspace} that $\mathcal{M}_{\mathcal{A}}$ is non-empty set since each $\mathcal{M}_{\mathcal{A}_{\alpha}}$ is non-empty, for $\alpha \in \Lambda$. Moreover, if $\Phi \in \mathcal{M}_{\mathcal{A}}$ then $\Phi$ is a unital map. Since $\mathcal{M}_{\mathcal{A}} \subseteq \mathcal{A}^{\ast}$, the class of all continuous linear functionals on $\mathcal{A}$, one can consider $\mathcal{M}_{\mathcal{A}}$ is equipped with weak$^{\ast}$-topology. However, unlike the case of $C^{\ast}$-algebras, $\mathcal{M}_{\mathcal{A}}$ may not be weak$^{\ast}$-compact (see Example \ref{eg: Not compact}).

\begin{eg}\label{eg: Not compact}
    Consider the commutative unital locally $C^{\ast}$-algebra $\mathcal{A}=C(\mathbb{R}).$ It is shown in Example \ref{Example: loc C* alg} that 
    \begin{equation*}
         C(\mathbb{R})= \varprojlim\limits_{n \in \mathbb{N}} C\left([-n, n]\right).
    \end{equation*}
    By Eberlein-Smulian theorem, $\mathcal{M}_{C(\mathbb{R})}$ is weak$^{\ast}$-compact if and only if every sequence in $\mathcal{M}_{C(\mathbb{R})}$ has a convergent subsequence. Let $\nu_{x}$ denotes the evaluation functional at $x \in \mathbb{R}$ given by
    \begin{equation*}
        \nu_{x}(f)=f(x), \text{ for all }f \in C(\mathbb{R}).
    \end{equation*}
    Now consider the sequence $\{n\}_{n \in \mathbb{N}}$  and a continuous function $g \colon \mathbb{R} \rightarrow \mathbb{R}$ given by $g(x)=x,$ for all $x \in \mathbb{R}.$ Since the sequence $\{\nu_n(g)=n\}_{n \in \mathbb{N}}$ is not cauchy in $\mathbb{R},$ it follows that the sequence $\{\nu_n\}_{n \in \mathbb{N}}$ of evaluation functionals does not have a weak$^{\ast}$-convergent subsequence and consequently $\mathcal{M}_{C(\mathbb{R})}$ is not weak$^{\ast}$-compact. 
    
\end{eg}
    
    
    

\subsection{$\mathcal{M}_{\mathcal{A}}$ is the inductive limit of $\{\mathcal{M}_{\mathcal{A}_\alpha}\}_{\alpha \in \Lambda}$}\label{subsection: inductive limit of maximal ideal spaces}

Next, we explore the connection between our notion of character space $\mathcal{M}_{\mathcal{A}}$ (see Definition \ref{defn: characterspace}) and the well known maximal ideal space of a commutative $C^{\ast}$-algebra $\mathcal{A}_\alpha$, where $\mathcal{A}_{\alpha} = \mathcal{A}/\mathcal{I}_{\alpha}$, for $\alpha \in \Lambda$ (see Remark \ref{rmk: Invlimit}). Let us recall that the map $\pi_{\alpha, \beta}\colon \mathcal{A}_{\beta} \to \mathcal{A}_{\alpha}$ defined by $\pi_{\alpha, \beta}(a+\mathcal{I}_{\beta}) = a + \mathcal{I}_{\alpha}$ is surjective $C^{\ast}$-homomorphism (whenever $\alpha \leq \beta$) and $\left( \big\{\mathcal{A}_{\alpha}\big\}_{\alpha \in \Lambda},\; \{\pi_{\alpha, \beta}\}_{\alpha \leq \beta}\right)$ is projective system such that $$\mathcal{A} = \varprojlim\limits_{\alpha \in \Lambda} \mathcal{A}_{\alpha}.$$

\noindent Firstly, assume that $\alpha \leq \beta.$ If $\varphi_\alpha \in \mathcal{M}_{\mathcal{A}_\alpha},$ 
then for $a,b \in \mathcal{A}$ we see that 
\begin{align*}
    \varphi_{\alpha} \circ \pi_{\alpha, \beta} \left( (a+\mathcal{I}_{\beta}) \cdot (b+\mathcal{I}_{\beta}) \right) &= \varphi_{\alpha} \circ \pi_{\alpha, \beta}\big(a\cdot b+\mathcal{I}_{\beta}\big) \\
    &= \varphi_\alpha\big(a\cdot b+\mathcal{I}_{\alpha}\big) \\
    &=\varphi_\alpha \big( (a+\mathcal{I}_{\alpha}) \cdot (b+\mathcal{I}_{\alpha}) \big) \\
    &= \varphi_\alpha\big(a+\mathcal{I}_{\alpha}\big) \cdot \varphi_\alpha\big(b+\mathcal{I}_{\alpha}\big) \\
    &= \varphi_{\alpha} \circ \pi_{\alpha, \beta}\big(a+\mathcal{I}_{\beta}\big) \cdot \varphi_{\alpha} \circ \pi_{\alpha, \beta}\big(b+\mathcal{I}_{\beta}\big).  
\end{align*}
That is, $\varphi_{\alpha} \circ \pi_{\alpha, \beta} \in \mathcal{M}_{\mathcal{A}_\beta}.$ In view of this, whenever $\alpha \leq \beta$, there exists a natural map $\up{\up{\gamma}}_{\beta,\alpha} \colon \mathcal{M}_{\mathcal{A}_\alpha} \rightarrow \mathcal{M}_{\mathcal{A}_\beta}$ defined by
\begin{equation*}
  \up{\up{\gamma}}_{\beta,\alpha}(\varphi_\alpha) = \varphi_\alpha \circ \pi_{\alpha,\beta}, \text{ for all } \varphi_\alpha \in \mathcal{M}_{\mathcal{A}_\alpha}.
\end{equation*}
To show that $\up{\up{\gamma}}_{\beta, \alpha}$ is continuous, let us consider an open set $V = \bigcap\limits_{i=1}^{N} \nu_{\pi_{\beta}(x_{i})}^{-1}(U)$ in $\mathcal{A}_{\beta}$ for some open $U$ in $\mathbb{C},$ where $\nu_{\pi_\beta(x_i)}$ is evaluation map and $x_{1}, x_{2}, \cdots, x_{N} \in \mathcal{A}$ then 
\begin{align*}
    \up{\up{\gamma}}_{\beta, \alpha}^{-1}(V) &=\Big\{\varphi_\alpha \in \mathcal{M}_{\mathcal{A}_\alpha} \colon \varphi_\alpha \circ \pi_{\alpha,\beta} \in  \bigcap\limits_{i=1}^{N} \nu_{\pi_{\beta}(x_{i})}^{-1}(U)\Big\} \\
   &= \bigcap^{N}_{i=1}\Big\{\varphi_\alpha \in \mathcal{M}_{\mathcal{A}_\alpha} \colon \varphi_\alpha \circ \pi_{\alpha,\beta} \in \nu^{-1}_{\pi_{\beta}(x_{i})}(U)\Big\} \\
    &=\bigcap^{n}_{i=1}\Big\{\varphi_\alpha \in \mathcal{M}_{\mathcal{A}_\alpha} \colon \varphi_\alpha \circ \pi_{\alpha,\beta}(\pi_{\beta}(x_{i})) \in U\Big\} \\
    &=\bigcap^{n}_{i=1}\Big\{\varphi_\alpha \in \mathcal{M}_{\mathcal{A}_\alpha} \colon \varphi_\alpha \circ \pi_{\alpha}(x_{i})\in U\Big\}\\
    &= \bigcap\limits_{i=1}^{N} \nu_{\pi_{\alpha}(x_{i})}^{-1}(U),
\end{align*}
which is open in $\mathcal{M}_{\mathcal{A}_\alpha}$. This implies that $\up{\up{\gamma}}_{\beta, \alpha}$ is continuous whenever $\alpha \leq \beta$. Also $\up{\gamma}_{\alpha,\alpha}$ is the identity map on $\mathcal{M}_{\mathcal{A}_\alpha}.$ Moreover, if $\alpha \leq \beta \leq \delta$ then 
\begin{equation} \label{eq: gamma transitivity}
     \up{\up{\gamma}}_{\delta,\beta} \circ \up{\up{\gamma}}_{\beta,\alpha}(\varphi_\alpha)  = \up{\up{\gamma}}_{\delta,\beta}(\varphi_\alpha \circ \pi_{\alpha,\beta}) 
    = \varphi_\alpha \circ \pi_{\alpha,\beta} \circ \pi_{\beta,\delta} 
     = \varphi_\alpha \circ \pi_{\alpha,\delta} 
     = \up{\up{\gamma}}_{\delta,\alpha}(\varphi_\alpha), 
\end{equation}
for all $\varphi_\alpha \in \mathcal{M}_{\mathcal{A}_\alpha}.$ By adopting the notion given in Definition \ref{defn: ind system}, we say that $\left(\{\mathcal{M}_{\mathcal{A}_\alpha}\}_{\alpha \in \Lambda}, \{\up{\gamma}_{\beta,\alpha}\}_{\alpha \leq \beta} \right)$ is an inductive system of topological spaces (more precisely, weakly compact spaces). 

In order to compute the inductive limit of inductive system $\left(\{\mathcal{M}_{\mathcal{A}_\alpha}\}_{\alpha \in \Lambda}, \{\up{\gamma}_{\beta,\alpha}\}_{\alpha \leq \beta}\right)$, we construct a corresponding strict inductive system.  
For each $\alpha \in \Lambda,$ define
\begin{equation*}
    \mathcal{Z}_\alpha := \big\{\varphi_\alpha \circ \pi_\alpha \colon \varphi_\alpha \in \mathcal{M}_{\mathcal{A}_\alpha}\big\}.
\end{equation*}
It is the collection of all those multiplicative linear functionals $\mathcal{A}$ induced from $\mathcal{M}_{\mathcal{A}_\alpha}.$ We shall show that $\big\{\mathcal{Z}_\alpha\big\}_{\alpha \in \Lambda}$ is a strict inductive system. To see this, let us assume that $\alpha \leq \beta$ and $\varphi_{\alpha} \in \mathcal{M}_{\mathcal{A}_{\alpha}}$ then from previous observation, we know that $\psi_{\beta}:= \varphi_{\alpha} \circ \pi_{\alpha, \beta} \in \mathcal{M}_{\mathcal{A}_{\beta}}$. By using the fact that $\pi_{\alpha, \beta} \circ \pi_{\beta} = \pi_{\alpha}$, we get  
\begin{equation*}
    \varphi_{\alpha}\circ \pi_{\alpha} = \varphi_{\alpha}\circ\left( \pi_{\alpha, \beta} \circ \pi_{\beta}\right) = \psi_{\beta} \circ \pi_{\beta} \in \mathcal{Z}_{\beta}. 
\end{equation*}
It follows that $\mathcal{Z}_{\alpha} \subseteq \mathcal{Z}_{\beta}$ whenever $\alpha \leq \beta$.  Now we define a topology on $\mathcal{Z}_{\alpha}.$ For $\alpha \in \Lambda$, let $\Omega_{\alpha}$ be the collection of all subsets $S \subseteq \mathcal{Z}_{\alpha}$ of the form $S = \big\{ \varphi_{\alpha}\circ \pi_{\alpha}:\; \varphi_{\alpha} \in \mathcal{O}_{\alpha}\big\}$ for some open set $\mathcal{O}_{\alpha}$ in $\mathcal{M}_{\mathcal{A}_{\alpha}}.$ Then we see that
\begin{enumerate}
    \item both $\emptyset,\; \mathcal{Z}_{\alpha} \in \Omega_{\alpha}$;
    \item if $S_{i} = \big\{\varphi_{\alpha}\circ \pi_{\alpha}:\; \varphi_{\alpha} \in \mathcal{O}_{\alpha, i} \big\}$ for some $\mathcal{O}_{\alpha, i}$ open in $\mathcal{M}_{\mathcal{A}_{\alpha}},\; i \in I \;(\text{indexing set}),$ then 
    \begin{equation*}
        \bigcup\limits_{i \in I} S_{i} = \bigcup\limits_{i \in I}\{\varphi_\alpha \circ \pi_\alpha \colon \varphi_\alpha \in \mathcal{O}_{\alpha, i}\} =\{\varphi_\alpha \circ \pi_\alpha \colon \varphi_\alpha \in \bigcup\limits_{i \in I}\mathcal{O}_{\alpha, i}\} \in \Omega_{\alpha};
    \end{equation*}
    \item if $S_{i} = \big\{\varphi_{\alpha}\circ \pi_{\alpha}:\; \varphi_{\alpha} \in \mathcal{O}_{\alpha, i} \big\}$ for some $\mathcal{O}_{\alpha, i}$ open in $\mathcal{M}_{\mathcal{A}_{\alpha}},\; i=1,2,\cdots,n,$ then 
    \begin{equation*}
        \bigcap\limits_{i=1}^{n} S_{i} = \bigcap\limits_{i=1}^{n}\{\varphi_\alpha \circ \pi_\alpha \colon \varphi_\alpha \in \mathcal{O}_{\alpha, i}\} =\{\varphi_\alpha \circ \pi_\alpha \colon \varphi_\alpha \in \bigcap\limits_{i=1}^{n}\mathcal{O}_{\alpha, i}\} \in \Omega_{\alpha}.
    \end{equation*}
    \end{enumerate}

\noindent Therefore, $\big\{\big(\mathcal{Z}_\alpha,\,\Omega_\alpha\big)\big\}_{\alpha \in \Lambda}$ is a strict inductive system of topological spaces. Moreover, 

\begin{equation} \label{Equation: Strictinductivelimit}
    \mathcal{M}_{\mathcal{A}} = \bigcup\limits_{\alpha \in \Lambda}\mathcal{Z}_\alpha.
\end{equation}
Here $\mathcal{M}_{\mathcal{A}}$ is equipped with (strict) inductive limit topology, that is the strongest topology on $\mathcal{M}_{\mathcal{A}}$ under which each inclusion map $i_{\alpha}\colon \mathcal{Z}_\alpha \hookrightarrow \mathcal{M}_{\mathcal{A}}$ is continuous, for all $\alpha \in \Lambda.$ Now we give an explicit description of the {\it inductive limit topology} on $\mathcal{M}_\mathcal{A}.$
 Let us define $\tau$ as follows, 
\begin{equation}\label{eq: inductive limit topology}
    \tau = \{S \subseteq \mathcal{M}_\mathcal{A} \colon S \, \cap \, \mathcal{Z}_\alpha \text{ is open in }\mathcal{Z}_\alpha, \text{ for all } \alpha \}.
\end{equation}
Firstly, we show that $\tau$ is a topology on $\mathcal{M}_\mathcal{A}$.
\begin{itemize}
    \item[(a)] Clearly, $\emptyset, \mathcal{M}_{\mathcal{A}} \in \tau.$
    \item[(b)] If $\{V_i\}_{i \in I} \subseteq \tau$ for some arbitrary index set $I$, then  $V_i \cap \mathcal{Z}_\alpha$ is open in $\mathcal{Z}_\alpha,$ for all $\alpha\ \in \Lambda, i \in I.$ It follows that 
    \begin{equation*}
        \left( \bigcup\limits_{i\in I} V_i \right) \cap \mathcal{Z}_\alpha = \bigcup\limits_{i \in I}(V_i \cap \mathcal{Z}_\alpha)
    \end{equation*} 
    is open in $\mathcal{Z}_\alpha$, for each $\alpha \in \Lambda.$ That is, $\bigcup\limits_{i\in I}V_{i} \in \tau.$ 
    \item[(c)] If $\{V_k\}_{k=1}^{n} \subseteq \tau$  then  $V_k \cap \mathcal{Z}_\alpha$ is open in $\mathcal{Z}_\alpha,$ for all $\alpha\ \in \Lambda$ and $ k=1,2, \cdots, n.$ This implies that 
    \begin{equation*}
        \left( \bigcap\limits_{k=1}^{n} V_{k} \right) \cap \mathcal{Z}_{\alpha} = \bigcap\limits_{k=1}^{n} V_{k} \cap \mathcal{Z}_{\alpha}
    \end{equation*}
    is open in $\mathcal{Z}_{\alpha},$ for each $\alpha \in \Lambda.$ Hence $\displaystyle\bigcap^n_{k=1}V_k \in \tau.$ \newline
\end{itemize}
Therefore, $\tau$ is a topology on $\mathcal{M}_\mathcal{A}$. Let $\alpha \in \Lambda$ and for any $S \in \tau$, then   $i^{-1}_\alpha(S) = S \cap \mathcal{Z}_\alpha$ is open in $\mathcal{Z}_\alpha$. As a result, every inclusion map $i_{\alpha}\colon \mathcal{Z}_{\alpha} \to \mathcal{M}_{\mathcal{A}}$ is continuous. 
Further, we show that $\tau$ is the strongest topology under which each inclusion map is continuous. Suppose $\tau^{\prime}$ is a topology on $\mathcal{M}_{\mathcal{A}}$ under which each $i_{\alpha}$ is continuous. If $S \in \tau^{\prime}$ then for every $\alpha \in \Lambda$ we have 
\begin{equation*}
    S \cap \mathcal{Z}_{\alpha} = i_{\alpha}^{-1}(S)\; \text{is open in}\; \mathcal{Z}_{\alpha}.
\end{equation*}
Equivalently, $S \in \tau.$ This shows that $\tau^{\prime} \subseteq \tau. $

Now we establish a concrete relation between the character space of commutative unital locally $C^{\ast}$-algebra $\mathcal{A}$ and the inductive system of maximal ideal spaces of corresponding quotient $C^{\ast}$-algebras. In order to show that $\mathcal{M}_{\mathcal{A}}$ is an inductive limit of $(\{\mathcal{M}_{\mathcal{A}_\alpha}\}_{\alpha \in \Lambda}, \{{\up{\gamma}}_{\beta,\alpha}\}_{\alpha \leq \beta})$, for each $\alpha \in \Lambda$ we define the map ${\up{\gamma}}_\alpha : \mathcal{M}_{\mathcal{A}_\alpha} \rightarrow \mathcal{M}_\mathcal{A}$ by 
\begin{equation*}
  {\up{\gamma}}_\alpha(\varphi_\alpha) = \varphi_\alpha \circ \pi_\alpha, \text{ for all } \varphi_\alpha \in \mathcal{M}_{\mathcal{A}_\alpha}. 
\end{equation*}
We prove that ${\up{\gamma}}_{\alpha}$ is continuous. Note that $\mathcal{M}_{\mathcal{A}_{\alpha}}$ is equipped with the weak$^{\ast}$-topology. Let $\alpha \in \Lambda$ be fixed. If $S \in \tau$ then $S \cap \mathcal{Z}_\alpha$ is open in $\mathcal{Z}_\alpha$, that is, there is an open set $\mathcal{O}_{\alpha}$ in $\mathcal{M}_{\mathcal{A}_{\alpha}}$  such that $S \cap \mathcal{Z}_{\alpha} = \big\{ \psi_{\alpha} \circ \pi_{\alpha}:\; \psi_{\alpha} \in \mathcal{O}_{\alpha}\big\}.$ It follows that
\begin{align*}
\varphi_{\alpha} \in \up{\gamma}_{\alpha}^{-1}(S) \; \text{if and only if}\; \varphi_{\alpha} \circ \pi_{\alpha} \in S \cap \mathcal{Z}_{\alpha} \; \text{if and only if}\; \varphi_{\alpha} \in \mathcal{O}_{\alpha}.    
\end{align*}
This shows that $\up{\gamma}_{\alpha}^{-1}(S)$ is open in $\mathcal{M}_{\mathcal{A}_{\alpha}}$ for any $S \in \tau.$ Therefore, $\up{\gamma}_\alpha$ is continuous for every $\alpha \in \Lambda.$ Whenever $\alpha \leq \beta$, we get
\begin{equation} \label{eq: compatible}
     \left(\up{\gamma}_\beta \circ \up{\gamma}_{\beta,\alpha}\right)(\varphi_\alpha)  = \up{\gamma}_\beta(\varphi_\alpha \circ \pi_{\alpha,\beta}) 
     = \left(\varphi_\alpha \circ \pi_{\alpha,\beta}\right) \circ \pi_\beta 
     = \varphi_\alpha \circ \pi_\alpha 
     = \up{\gamma}_\alpha(\varphi_\alpha), 
\end{equation}
for every $\varphi_\alpha \in \mathcal{M}_{\mathcal{A}_\alpha}$. As a result, $\up{\gamma}_{\beta} \circ \up{\gamma}_{\beta,\alpha}= \up{\gamma}_{\alpha}$ whenever $\alpha \leq \beta.$ Since $\mathcal{M}_{\mathcal{A}}$ is a topological space and $\{\up{\gamma}_\alpha\}_{\alpha \in \Lambda}$ is a family of continuous maps satisfying Equation \eqref{eq: compatible}, we see that $\left(\mathcal{M}_\mathcal{A}, \{\up{\gamma}_\alpha\}_{\alpha \in \Lambda}\right)$ is compatible with the inductive system $\left(\{\mathcal{M}_{\mathcal{A}_\alpha}\}_{\alpha \in \Lambda}, \{\up{\gamma}_{\beta,\alpha}\}_{\alpha \leq \beta}\right).$ Therefore, 
\begin{equation}\label{Eq: M_A inductive limit} 
   \mathcal{M}_\mathcal{A} = \varinjlim_{\alpha \in \Lambda}\mathcal{M}_{\mathcal{A}_\alpha}.
\end{equation}
\begin{rmk}\label{rmk: continuity and homeomorphism}
    There is a comparison between inductive limit topology and weak$^{\ast}$-topology on $\mathcal{M}_\mathcal{A}.$
    \begin{enumerate}
        \item The inductive limit topology on $\mathcal{M}_{\mathcal{A}}$ is finer than the weak$^{\ast}$-topology on $\mathcal{M}_{\mathcal{A}}$. This is because, if $\bigcap\limits^{N}_{i=1}\nu^{-1}_{x_i}(U)$ is open in $\mathcal{M}_{\mathcal{A}},$ for some $x_1,x_2,\cdots,x_N \in \mathcal{A}$ and some open set $U$ in $\mathbb{C},$ then 
        \begin{align*}
            \up{\gamma}^{-1}_\alpha\Big(\bigcap^{N}_{i=1}\nu^{-1}_{x_i}(U)\Big) &=\big\{\varphi_\alpha \in \mathcal{M}_{\mathcal{A}_\alpha} \colon \varphi_\alpha \circ \pi_\alpha \in \bigcap^{N}_{i=1}\nu^{-1}_{x_i}(U)\big\} \\
    &=\bigcap^{N}_{i=1}\big\{\varphi_\alpha \in \mathcal{M}_{\mathcal{A}_\alpha} \colon \varphi_\alpha \circ \pi_\alpha \in \nu^{-1}_{x_i}(U)\big\} \\
    &= \bigcap^{N}_{i=1}\big\{\varphi_\alpha \in \mathcal{M}_{\mathcal{A}_\alpha} \colon \varphi_\alpha \circ \pi_\alpha(x_i) \in U\big\} \\
    &= \bigcap^{N}_{i=1}\nu^{-1}_{\pi_\alpha(x_i)}(U),
        \end{align*}
    which is weak$^{\ast}$-open in $\mathcal{M}_{\mathcal{A}_\alpha}.$ This is, $\up{\gamma}_{\alpha}$ is continuous for every $\alpha \in \Lambda.$ 

    \item $\mathcal{M}_{\mathcal{A}_\alpha}$ is homeomorphic to $\mathcal{Z}_\alpha.$ Consider the continuous map $i_\alpha^{-1} \circ \up{\gamma}_\alpha \colon \mathcal{M}_{\mathcal{A}_\alpha} \to \mathcal{Z}_\alpha,$ where $i_\alpha^{-1} \circ \up{\gamma}_\alpha(\varphi_\alpha)=\varphi_\alpha \circ \pi_\alpha,$ for all $\varphi_\alpha \in \mathcal{M}_{\mathcal{A}_\alpha}.$ Note that, $i_\alpha^{-1} \circ \up{\gamma}_\alpha$ is bijective. To see this, consider $\varphi_\alpha,\;\psi_\alpha$ in $\mathcal{M}_{\mathcal{A}_\alpha}$ such that $i_\alpha^{-1} \circ \up{\gamma}_\alpha(\varphi_\alpha)=i_\alpha^{-1} \circ \up{\gamma}_\alpha(\psi_\alpha).$ Then
    \begin{align*}
        \varphi_\alpha \circ \pi_\alpha=\psi_\alpha \circ \pi_\alpha &\implies \varphi_\alpha \circ \pi_\alpha(a)=\psi_\alpha \circ \pi_\alpha(a),\text{ for all } a \in \mathcal{A} \\
        &\implies \varphi_\alpha(a+\mathcal{I}_\alpha)=\psi_\alpha(a+\mathcal{I}_\alpha),\text{ for all } a \in \mathcal{A} \\
        &\implies \varphi_\alpha=\psi_\alpha.
    \end{align*}
    Therefore $i_\alpha^{-1} \circ \up{\gamma}_\alpha$ is injective. Moreover by the definition of  $i_\alpha^{-1} \circ \up{\gamma}_\alpha,$ it is surjective, hence bijective. Further, for any open set $S=\left\{\varphi_\alpha \circ \pi_\alpha \colon \varphi_\alpha \in \mathcal{O}_\alpha\right\}$ in $\mathcal{Z}_\alpha,$ where $\mathcal{O}_\alpha$ is open in $\mathcal{M}_{\mathcal{A}_\alpha},$
    \begin{equation*}
        (i_\alpha^{-1} \circ \up{\gamma}_\alpha)^{-1}(S)=\up{\gamma}_\alpha^{-1} \circ i_\alpha(S)=\up{\gamma}^{-1}_{\alpha}\left(\{\varphi_\alpha \circ \pi_\alpha \colon \varphi_\alpha \in \mathcal{O}_\alpha\}\right)=\mathcal{O}_\alpha,
    \end{equation*}
    which is weak$^{\ast}$-open in $\mathcal{M}_{\mathcal{A}_\alpha}.$ Therefore,  $(i_\alpha^{-1} \circ \up{\gamma}_\alpha)^{-1}$ is continuous and consequently, $i_\alpha^{-1} \circ \up{\gamma}_\alpha$ is a homeomorphism.
    \end{enumerate}
\end{rmk}
\begin{prop}\label{prop: topological properties of character sp}
Let $\mathcal{A}$ be a commutative unital locally $C^{\ast}$-algebra. Then $\mathcal{M}_\mathcal{A}$ with inductive limit topology is completely regular. In particular, 
\begin{enumerate}
    \item if the underlying directed POSET $\Lambda = \mathbb{N}$, then $\mathcal{M}_\mathcal{A}$ is $\sigma$-compact.
    \item If $\mathcal{A}$ is a locally von-Nuemann algebra, then $\mathcal{M}_{\mathcal{A}}$ is the inductive limit of extremally disconnected spaces.
\end{enumerate}
\end{prop}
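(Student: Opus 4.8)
The plan is to exploit the strict inductive limit structure $\mathcal{M}_{\mathcal{A}} = \bigcup_{\alpha \in \Lambda} \mathcal{Z}_{\alpha}$ from Equation \eqref{Equation: Strictinductivelimit}, together with the fact (Remark \ref{rmk: continuity and homeomorphism}) that each $\mathcal{Z}_{\alpha}$ is homeomorphic to the compact Hausdorff space $\mathcal{M}_{\mathcal{A}_{\alpha}}$, hence normal and so amenable to Urysohn's lemma and the Tietze extension theorem. The topological engine throughout is the universal property of the inductive limit topology $\tau$ of Equation \eqref{eq: inductive limit topology}: a map $f \colon \mathcal{M}_{\mathcal{A}} \to [0,1]$ is $\tau$-continuous if and only if each restriction $f|_{\mathcal{Z}_{\alpha}}$ is continuous. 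Thus producing a continuous function on $\mathcal{M}_{\mathcal{A}}$ is reduced to producing a \emph{coherent} family of continuous functions on the compact pieces $\mathcal{Z}_{\alpha}$, and dually a set $C$ is $\tau$-closed precisely when every $C \cap \mathcal{Z}_{\alpha}$ is closed in $\mathcal{Z}_{\alpha}$.

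For complete regularity, fix $\Phi \in \mathcal{M}_{\mathcal{A}}$ and a $\tau$-closed set $C$ with $\Phi \notin C$. First I would choose $\alpha_{0}$ with $\Phi \in \mathcal{Z}_{\alpha_{0}}$ and pass to the cofinal directed subset $\{\alpha : \alpha \geq \alpha_{0}\}$; since $\Lambda$ is directed this subsystem still has union $\mathcal{M}_{\mathcal{A}}$ and induces the same topology, so I may assume $\alpha_{0}$ is least. Each $C \cap \mathcal{Z}_{\alpha}$ is then closed in $\mathcal{Z}_{\alpha}$ and $\Phi \notin C \cap \mathcal{Z}_{\alpha_{0}}$, so Urysohn's lemma on the compact Hausdorff space $\mathcal{Z}_{\alpha_{0}}$ yields a continuous $f_{\alpha_{0}} \colon \mathcal{Z}_{\alpha_{0}} \to [0,1]$ with $f_{\alpha_{0}}(\Phi)=0$ and $f_{\alpha_{0}} \equiv 1$ on $C \cap \mathcal{Z}_{\alpha_{0}}$. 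The aim is to extend this to a coherent family $\{f_{\alpha}\}$ with $f_{\beta}|_{\mathcal{Z}_{\alpha}} = f_{\alpha}$ for $\alpha \leq \beta$ and $f_{\alpha} \equiv 1$ on $C \cap \mathcal{Z}_{\alpha}$: at each stage one pastes the already-built function on a lower compact piece with the constant $1$ on $C \cap \mathcal{Z}_{\beta}$ (these agree on the overlap, as $f_{\alpha_0}\equiv 1$ on $C \cap \mathcal{Z}_{\alpha_0}$) and applies Tietze to extend the resulting continuous function from a closed subset of the compact Hausdorff space $\mathcal{Z}_{\beta}$ to all of $\mathcal{Z}_{\beta}$, still with values in $[0,1]$. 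The glued $f$ then satisfies $f(\Phi)=0$ and $f|_{C} \equiv 1$ and is $\tau$-continuous by the universal property, giving complete regularity; Hausdorffness is inherited because distinct characters are separated by some evaluation functional $\nu_{a}$, which is $\tau$-continuous by Remark \ref{rmk: continuity and homeomorphism}, so the (already Hausdorff) weak$^{\ast}$ topology is dominated by $\tau$.

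The main obstacle is precisely the coherent extension over a general directed POSET. A naive transfinite recursion stumbles because the directed union $\bigcup_{\alpha \leq \beta} \mathcal{Z}_{\alpha}$ of lower compact pieces inside $\mathcal{Z}_{\beta}$ need not be closed, so Tietze cannot be applied against all previously prescribed values simultaneously. I expect to resolve this by building the family one bonding map at a time and verifying that incomparable indices impose no additional constraint: for $\alpha_{1},\alpha_{2} \leq \beta$ the partial functions automatically agree on $\mathcal{Z}_{\alpha_{1}} \cap \mathcal{Z}_{\alpha_{2}}$ once both are realized as restrictions of $f_{\beta}$. This consistency bookkeeping is the delicate heart of the argument; when $\Lambda$ is a chain (in particular $\Lambda = \mathbb{N}$) it collapses to a transparent step-by-step Tietze induction.

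The two special cases are then short. When $\Lambda = \mathbb{N}$, each $\mathcal{Z}_{n}$ is compact in $\mathcal{M}_{\mathcal{A}}$, being the continuous image of the compact space $\mathcal{Z}_{n}$ under the inclusion $i_{n}$, and $\mathcal{M}_{\mathcal{A}} = \bigcup_{n \in \mathbb{N}} i_{n}(\mathcal{Z}_{n})$ exhibits $\mathcal{M}_{\mathcal{A}}$ as a countable union of compact sets, hence $\sigma$-compact. When $\mathcal{A}$ is a commutative unital locally von Neumann algebra, \cite[Proposition 3.14]{Joita} presents it as a projective limit of commutative von Neumann algebras, which may be taken to be the quotients $\mathcal{A}_{\alpha}$; each such $\mathcal{A}_{\alpha}$ is a commutative monotone-complete ($W^{\ast}$) $C^{\ast}$-algebra, so classically its maximal ideal space $\mathcal{M}_{\mathcal{A}_{\alpha}}$ is extremally disconnected (Stonean). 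Invoking Equation \eqref{Eq: M_A inductive limit}, $\mathcal{M}_{\mathcal{A}} = \varinjlim_{\alpha \in \Lambda} \mathcal{M}_{\mathcal{A}_{\alpha}}$ is therefore the inductive limit of extremally disconnected spaces.
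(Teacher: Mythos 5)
Your treatment of parts (1) and (2) is correct and essentially identical to the paper's (countable union of the compact sets $\mathcal{Z}_n$ for $\sigma$-compactness; Stonean maximal ideal spaces of the von Neumann quotients together with Equation \eqref{Eq: M_A inductive limit}), and your reduction of $\tau$-continuity of $f$ to continuity of the restrictions $f|_{\mathcal{Z}_\alpha}$ is valid. The genuine gap is in the main claim, complete regularity for an \emph{arbitrary} directed $\Lambda$: the coherent family $\{f_\alpha\}_{\alpha\in\Lambda}$ with $f_\beta|_{\mathcal{Z}_\alpha}=f_\alpha$ for $\alpha\le\beta$ is never actually constructed --- you defer it (``I expect to resolve this\dots''), and the proposed resolution is circular. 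You claim that incomparable $\alpha_1,\alpha_2\le\beta$ impose no extra constraint because their functions ``automatically agree on $\mathcal{Z}_{\alpha_1}\cap\mathcal{Z}_{\alpha_2}$ once both are realized as restrictions of $f_\beta$''; but arranging that they \emph{are} restrictions of a single $f_\beta$ is precisely what has to be proved. In any transfinite recursion over a well-ordering of $\Lambda$ (which cannot be chosen compatible with $\le$), at some stage you will have built $f_{\beta_1}$ and $f_{\beta_2}$ for incomparable $\beta_1,\beta_2$ independently; nothing forces them to agree on $\mathcal{Z}_{\beta_1}\cap\mathcal{Z}_{\beta_2}$ (this intersection need not be exhausted by pieces $\mathcal{Z}_\gamma$ with $\gamma\le\beta_1,\beta_2$), and once they disagree there, no $f_\delta$ with $\delta\ge\beta_1,\beta_2$ can restrict to both, so the recursion halts. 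A Zorn's lemma variant on partial coherent families hits the same wall, compounded by the difficulty you yourself identify: the set where values are already prescribed, $\bigcup\big\{\mathcal{Z}_\alpha:\alpha\le\beta,\ \alpha\ \text{already treated}\big\}$, need not be closed in $\mathcal{Z}_\beta$, so Tietze does not apply. Hence your argument establishes the proposition only when $\Lambda$ is a chain (e.g.\ $\Lambda=\mathbb{N}$), not in the stated generality.

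The paper's proof avoids this bookkeeping entirely: it never builds separating functions by hand. From $(2)$ of Remark \ref{rmk: continuity and homeomorphism}, each $\mathcal{Z}_\alpha$ is compact Hausdorff; the paper then asserts that the strict inductive limit $\mathcal{M}_\mathcal{A}=\bigcup_{\alpha\in\Lambda}\mathcal{Z}_\alpha$ of Equation \eqref{Equation: Strictinductivelimit} is locally compact and Hausdorff (a step that is itself nontrivial for general strict inductive limits, but it is the route the paper takes), forms the one-point compactification $\mathring{\mathcal{M}}_\mathcal{A}$, which is compact Hausdorff, hence normal, hence completely regular, and finally uses the fact that complete regularity is hereditary to subspaces. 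If you want to repair your argument, the efficient fix is of this hereditary type: establish that $\mathcal{M}_\mathcal{A}$ embeds in (or is an open/dense subspace of) some compact Hausdorff space, and let Urysohn's lemma in the ambient compact space produce the separating function in one stroke, with no compatibility conditions across the pieces $\mathcal{Z}_\alpha$ to maintain.
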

\begin{proof}
    We know from $(2)$ of Remark \ref{rmk: continuity and homeomorphism} that $\mathcal{M}_{\mathcal{A}_\alpha}$ is homeomorphic to $\mathcal{Z}_\alpha,$ for every $\alpha \in \Lambda.$ Since $\mathcal{M}_{\mathcal{A}_\alpha}$ is weak$^{\ast}$-compact and Hausdorff, it follows that $\mathcal{Z}_\alpha$ is compact and Hausdorff with respect to $\Omega_\alpha.$ Further, from Equation \eqref{Equation: Strictinductivelimit}, it follows that $\mathcal{M}_\mathcal{A}$ is locally compact and Hausdorff (being strict inductive limit of Hausdorff spaces). Therefore $\mathcal{M}_{\mathcal{A}}$ possesses a one point compactification, say $\mathring{\mathcal{M}}_{\mathcal{A}}$ and $\mathring{\mathcal{M}}_{\mathcal{A}}$ is compact Hausdorff space, hence normal. This implies that $\mathring{\mathcal{M}}_{\mathcal{A}}$ is completely regular and so $\mathcal{M}_\mathcal{A}$. \\

    \noindent Proof of $(1)$: Suppose $\Lambda=\mathbb{N},$ then $\mathcal{M}_\mathcal{A}=\bigcup\limits_{n \in \mathbb{N}}\mathcal{Z}_n,$ the countable union of compact sets and hence $\mathcal{M}_\mathcal{A}$ is $\sigma$-compact.\\

   \noindent Proof of $(2)$: In particular if $\mathcal{A}$ is commutative unital locally von Neumann algebra, then each $\mathcal{A}_\alpha$ is commutative unital von Neumann algebra. By Theorem 9.6 of \cite{kehezhu}, the maximal ideal space $\mathcal{M}_{\mathcal{A}_\alpha}$ is extremally disconnected space (in fact it is a Stonean space) for each $\alpha \in \Lambda.$ So $\mathcal{Z}_\alpha$ is extremally disconnected  from $(2)$ of Remark \ref{rmk: continuity and homeomorphism}. It follows that, $\mathcal{M}_{\mathcal{A}}$ is the (strict) inductive limit of extremally disconnected spaces. 
\end{proof}
\begin{note}\label{note: relation between character space and maximal ideals}
Now we point out a few observations in regard to maximal ideals in a commutative unital locally $C^{\ast}$-algebra $\mathcal{A}$ and give a  relation between the character space of $\mathcal{A}$ and certain type of maximal ideals. Firstly note that, if $\Phi \in \mathcal{M}_\mathcal{A},$ then $\Phi=\varphi_\beta \circ \pi_\beta,$ for some $\beta \in \Lambda$ and $\varphi_\beta \in \mathcal{M}_{\mathcal{A}_\beta}.$ It follows that
\begin{align*}
    J := Ker(\Phi) &= \big\{x \in \mathcal{A} \colon \Phi(x)=0\big\} \\
    &= \big\{x \in \mathcal{A} \colon \varphi_\beta(\pi_\beta(x))=0\big\} \\
    &= \big\{x \in \mathcal{A} \colon \pi_\beta(x) \in Ker(\varphi_\beta)\big\}.
\end{align*}
It is immediate to see that, $\mathcal{I}_\beta \subseteq J$ and $\pi_\beta(J)=Ker(\varphi_\beta)$ is a maximal ideal in $\mathcal{A}_\beta$ (follows from \cite[Theorem 4.3]{kehezhu}). Consequently, $J$ is a maximal ideal. This can be seen as follows: suppose $J$ is not a maximal ideal, then there is a proper ideal $J^{\prime}$ of $\mathcal{A}$ such that $J \subsetneq J^{\prime} \subsetneq \mathcal{A}$. There exists an $x \in J^{\prime}$ with $x \notin J$ and so, $\pi_{\alpha}(x) \in \pi_{\alpha}(J^{\prime}) \setminus \pi_{\alpha}(J)$. Since $\mathcal{I}_{\alpha} \subseteq J$, we see that $\pi_{\alpha}(J^{\prime})$ is a proper ideal in $\mathcal{A}_{\alpha}$ such that $\pi_{\alpha}(J) \subsetneq \pi_{\alpha}(J^{\prime}) \subsetneq \mathcal{A}_{\alpha}.$ This is a contradiction to the fact that $\pi_{\alpha}(J)$ is maximal in $\mathcal{A}_{\alpha}.$
\end{note}
From the above observation, the maximal ideals of $\mathcal{A}$ containing $\mathcal{I}_{\alpha}$ for some $\alpha \in \Lambda$ seem to have a connection with the members of $\mathcal{M}_\mathcal{A}$. The following result gives one to one correspondence between them. 
\begin{thm}\label{thm: correspondence}
    If $\mathcal{A}$ is a commutative unital locally $C^{\ast}$-algebra, then $\mathcal{M}_\mathcal{A}$ is in one to one correspondence with the maximal ideals in $\mathcal{A}$ containing $\mathcal{I}_\alpha,$ 
    \,for some $\alpha \in \Lambda.$ 
\end{thm}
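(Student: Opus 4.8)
The plan is to show that the assignment $\Phi \mapsto \ker(\Phi)$ is the desired bijection. Write $\mathcal{N}$ for the set of maximal ideals of $\mathcal{A}$ that contain $\mathcal{I}_\alpha$ for some $\alpha \in \Lambda$, and define $\Theta \colon \mathcal{M}_\mathcal{A} \to \mathcal{N}$ by $\Theta(\Phi) = \ker(\Phi)$. That $\Theta$ is well defined is precisely the content of Note \ref{note: relation between character space and maximal ideals}: if $\Phi = \varphi_\beta \circ \pi_\beta$ with $\varphi_\beta \in \mathcal{M}_{\mathcal{A}_\beta}$, then $\ker(\Phi)$ is a maximal ideal of $\mathcal{A}$ containing $\mathcal{I}_\beta$, so it belongs to $\mathcal{N}$. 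It therefore remains to prove that $\Theta$ is injective and surjective.

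For injectivity I would use that every $\Phi \in \mathcal{M}_\mathcal{A}$ is unital. Suppose $\Phi_1, \Phi_2 \in \mathcal{M}_\mathcal{A}$ satisfy $\ker(\Phi_1) = \ker(\Phi_2) =: J$. For arbitrary $x \in \mathcal{A}$ the element $x - \Phi_1(x)\,1_\mathcal{A}$ lies in $\ker(\Phi_1) = J = \ker(\Phi_2)$, so applying $\Phi_2$ and using $\Phi_2(1_\mathcal{A}) = 1$ yields $\Phi_2(x) = \Phi_1(x)\,\Phi_2(1_\mathcal{A}) = \Phi_1(x)$. Since $x$ is arbitrary, $\Phi_1 = \Phi_2$, and hence $\Theta$ is injective.

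Surjectivity is where the actual work lies, and it is where I expect the main obstacle. Let $M \in \mathcal{N}$ and fix $\alpha$ with $\mathcal{I}_\alpha \subseteq M$. The idea is to descend $M$ to the quotient $C^{\ast}$-algebra $\mathcal{A}_\alpha$ and invoke Gelfand theory there. Since $\ker(\pi_\alpha) = \mathcal{I}_\alpha \subseteq M$, the ring-theoretic correspondence between ideals of $\mathcal{A}$ containing $\mathcal{I}_\alpha$ and ideals of $\mathcal{A}_\alpha = \mathcal{A}/\mathcal{I}_\alpha$ shows that $\pi_\alpha(M) = M/\mathcal{I}_\alpha$ is a proper ideal of $\mathcal{A}_\alpha$; as this correspondence preserves inclusions it also preserves maximality, so $\pi_\alpha(M)$ is a maximal ideal of the commutative unital $C^{\ast}$-algebra $\mathcal{A}_\alpha$. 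By the Gelfand correspondence for commutative unital $C^{\ast}$-algebras (used through \cite[Theorem 4.3]{kehezhu}), there is a unique $\varphi_\alpha \in \mathcal{M}_{\mathcal{A}_\alpha}$ with $\ker(\varphi_\alpha) = \pi_\alpha(M)$. Setting $\Phi := \varphi_\alpha \circ \pi_\alpha$ produces, by Definition \ref{defn: characterspace}, an element of $\mathcal{M}_\mathcal{A}$ with $\ker(\Phi) = \pi_\alpha^{-1}(\ker(\varphi_\alpha)) = \pi_\alpha^{-1}(\pi_\alpha(M)) = M$, the final equality again using $\mathcal{I}_\alpha \subseteq M$. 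Hence $\Theta(\Phi) = M$, and $\Theta$ is surjective.

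The delicate points I would verify carefully are twofold: first, that maximal ideals of the ($C^{\ast}$, hence Banach) algebra $\mathcal{A}_\alpha$ are automatically closed and are exactly the kernels of characters, so that the descended ideal $\pi_\alpha(M)$ genuinely arises from some $\varphi_\alpha \in \mathcal{M}_{\mathcal{A}_\alpha}$; and second, the preimage identity $\pi_\alpha^{-1}(\pi_\alpha(M)) = M$, which is immediate once $\mathcal{I}_\alpha = \ker(\pi_\alpha) \subseteq M$. Everything else is standard first-isomorphism and lattice-of-ideals bookkeeping, so the conceptual heart of the argument is simply transporting the classical Gelfand bijection through the quotient map $\pi_\alpha$, with the well-definedness already supplied by Note \ref{note: relation between character space and maximal ideals}.
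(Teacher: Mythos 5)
Your proof is correct and follows essentially the same route as the paper: the assignment $\Phi \mapsto \ker(\Phi)$, well-definedness via Note \ref{note: relation between character space and maximal ideals}, and surjectivity by descending a maximal ideal $M \supseteq \mathcal{I}_\alpha$ to the maximal ideal $\pi_\alpha(M)$ of $\mathcal{A}_\alpha$ and invoking \cite[Theorem 4.3]{kehezhu}, exactly as the paper does with its contradiction argument for maximality of $\pi_\alpha(J)$. The only difference is that you spell out injectivity explicitly (via $x - \Phi_1(x)1_{\mathcal{A}} \in \ker\Phi_2$ and unitality of characters), a step the paper's proof leaves implicit; this is a useful completion of the same argument rather than a different approach.
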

\begin{proof}
    Firstly, take $\Phi=\varphi_\beta \circ \pi_\beta \in \mathcal{M}_{\mathcal{A}}$ for some $\beta \in \Lambda,$ then by Note \ref{note: relation between character space and maximal ideals}\; $\mathcal{I}_\beta \subseteq Ker(\Phi)$ and $Ker(\Phi)$ is a maximal ideal. Conversely, if $J$ is a maximal ideal in $\mathcal{A}$ containing $\mathcal{I}_\alpha,$ for some $\alpha \in \Lambda.$ We show that $\pi_\alpha(J)$ is maximal ideal in $\mathcal{A}_\alpha.$ If possible assume that $\pi_\alpha(J)$ is not maximal in $\mathcal{A}_\alpha,$ then there is a proper ideal $K^{(\alpha)}$ in $\mathcal{A}_{\alpha}$ with 
    $\pi_{\alpha}(J) \subsetneq K^{(\alpha)} \subsetneq \mathcal{A}_{\alpha}.$ This implies that $\pi_{\alpha}^{-1}(K^{(\alpha)})$ is a proper ideal in $\mathcal{A}$ satisfying, 
    \begin{equation*}
        J \subsetneq \pi_{\alpha}^{-1}(K^{(\alpha)}) \subsetneq \mathcal{A}.
    \end{equation*}
    This is a contradiction to the fact that $J$ is a maximal ideal in $\mathcal{A}.$ Therefore $\pi_\alpha(J)$ is maximal in $\mathcal{A}_\alpha.$ By Theorem $4.3$ of \cite{kehezhu}, there exists $\psi_\alpha \in \mathcal{M}_{\mathcal{A}_\alpha}$ such that $Ker(\psi_\alpha)=\pi_\alpha(J).$ It follows that the map $\Psi=\psi_\alpha \circ \pi_\alpha \in \mathcal{M}_\mathcal{A}$ and since $\mathcal{I}_{\alpha} \subseteq J,$ we have
    \begin{align*}
        Ker(\Psi)&=\big\{x \in \mathcal{A} \colon\; \pi_\alpha(x) \in Ker(\psi_\alpha)\big\} = \big\{x \in \mathcal{A} \colon x+\mathcal{I}_\alpha \in \pi_\alpha(J)\big\} = J.
    \end{align*}
    We conclude that the map $\Phi \mapsto Ker(\Phi)$ defines one-to-one correspondence between the space $\mathcal{M}_{\mathcal{A}}$ and the collection of all maximal ideals in $\mathcal{A}$ containing $\mathcal{I}_{\alpha}$ for some $\alpha \in \Lambda.$
\end{proof}
\section{Gelfand type representation of Locally C* Algebras}\label{section: Gelfand type}
 In this section, we establish a \textit{Gelfand type representation} of commutative unital locally $C^{\ast}$-algebra $\mathcal{A}.$ Recall that if $(\Lambda, \leq)$ is the underlying directed POSET, then $\mathcal{A} = \varprojlim\limits_{\alpha \in \Lambda} \mathcal{A}_{\alpha}$, where  $\mathcal{A}_{\alpha}:= \mathcal{A}/ \mathcal{I}_{\alpha}$ is a commutative unital $C^{\ast}$-algebra and $\big(\{\mathcal{A}_{\alpha}\}_{\alpha \in \Lambda},\{\pi_{\alpha,\beta}\}_{\alpha \leq \beta}\big)$ is a projective system (see Definition \ref{defn: proj system}). From the well-known Gelfand representation theorem\cite{kehezhu} of commutative unital $C^{\ast}$-algebra, for each $\alpha \in \Lambda$, there is an isometric $\ast$-isomorphism ($C^{\ast}$- representation) ${\Gamma}_\alpha \colon \mathcal{A}_{\alpha} \to C(\mathcal{M}_{\mathcal{A}_{\alpha}})$ given by 
 \begin{equation}\label{eq: Gelfand map}
     \Gamma_\alpha(x_\alpha)(\varphi_\alpha)=\varphi_\alpha(x_\alpha), \text{ for all } x_\alpha \in \mathcal{A}_\alpha \,, \varphi_\alpha \in \mathcal{M}_{\mathcal{A}_\alpha}.
 \end{equation}
 Note that $\Gamma_{\alpha}(x_{\alpha})$ is a continuous function on $\mathcal{M}_{\mathcal{A}_{\alpha}}$ for each $\alpha \in \Lambda.$
 \begin{thm}\label{Thm: C(M_A) is local C* alg}
     For a commutative unital locally $C^{\ast}$-algebra $\mathcal{A},$ we have 
     \begin{equation*}
         C(\mathcal{M}_{\mathcal{A}})=\varprojlim\limits_{\alpha \in \Lambda}C(\mathcal{M}_{\mathcal{A}_\alpha}).
     \end{equation*}
     In other words, $C(\mathcal{M}_\mathcal{A})$ is a commutative unital locally $C^{\ast}$-algebra.
 \end{thm}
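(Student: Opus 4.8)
The plan is to produce an explicit isomorphism between $C(\mathcal{M}_{\mathcal{A}})$ and the projective limit $\varprojlim_\alpha C(\mathcal{M}_{\mathcal{A}_\alpha})$, and to read off the locally $C^{\ast}$-structure from that identification. First I would assemble the target projective system. For $\alpha \le \beta$ the continuous injection $\up{\gamma}_{\beta,\alpha}\colon \mathcal{M}_{\mathcal{A}_\alpha}\to\mathcal{M}_{\mathcal{A}_\beta}$ pulls functions back, giving $\rho_{\alpha,\beta}\colon C(\mathcal{M}_{\mathcal{A}_\beta})\to C(\mathcal{M}_{\mathcal{A}_\alpha})$, $F\mapsto F\circ\up{\gamma}_{\beta,\alpha}$. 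Since $\up{\gamma}_{\beta,\alpha}$ is a continuous injection of the compact space $\mathcal{M}_{\mathcal{A}_\alpha}$ into the Hausdorff space $\mathcal{M}_{\mathcal{A}_\beta}$, it is a homeomorphism onto its (compact, hence closed) image, and the Tietze extension theorem then shows $\rho_{\alpha,\beta}$ is a surjective unital $\ast$-homomorphism; transitivity of $\{\up{\gamma}_{\beta,\alpha}\}$ (Equation \eqref{eq: gamma transitivity}) yields transitivity of $\{\rho_{\alpha,\beta}\}$. Thus $\big(\{C(\mathcal{M}_{\mathcal{A}_\alpha})\},\{\rho_{\alpha,\beta}\}\big)$ is a projective system of commutative unital $C^{\ast}$-algebras, and by Remark \ref{rmk: Invlimit} its projective limit is a locally $C^{\ast}$-algebra.

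Next I would introduce the comparison maps $\Theta_\alpha\colon C(\mathcal{M}_{\mathcal{A}})\to C(\mathcal{M}_{\mathcal{A}_\alpha})$ by $\Theta_\alpha(F)=F\circ\up{\gamma}_\alpha$; these are well defined because $\up{\gamma}_\alpha$ is continuous and $\mathcal{M}_{\mathcal{A}_\alpha}$ is compact. Using $\up{\gamma}_\beta\circ\up{\gamma}_{\beta,\alpha}=\up{\gamma}_\alpha$ from Equation \eqref{eq: compatible}, a one-line computation gives $\rho_{\alpha,\beta}\circ\Theta_\beta=\Theta_\alpha$ whenever $\alpha\le\beta$, so $\big(C(\mathcal{M}_{\mathcal{A}}),\{\Theta_\alpha\}\big)$ is compatible with the projective system. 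By the universal property of the projective limit this induces a unique continuous unital $\ast$-homomorphism $\Theta\colon C(\mathcal{M}_{\mathcal{A}})\to\varprojlim_\alpha C(\mathcal{M}_{\mathcal{A}_\alpha})$.

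The core of the argument is that $\Theta$ is a bijection. Injectivity is immediate: if $F\circ\up{\gamma}_\alpha=0$ for every $\alpha$, then $F$ vanishes on each $\mathcal{Z}_\alpha=\up{\gamma}_\alpha(\mathcal{M}_{\mathcal{A}_\alpha})$ and hence on $\mathcal{M}_{\mathcal{A}}=\bigcup_\alpha\mathcal{Z}_\alpha$ by Equation \eqref{Equation: Strictinductivelimit}. For surjectivity I would glue: given a compatible family $\{F_\alpha\}$ with $F_\beta\circ\up{\gamma}_{\beta,\alpha}=F_\alpha$, define $F$ on $\mathcal{Z}_\alpha$ by $F(\varphi_\alpha\circ\pi_\alpha)=F_\alpha(\varphi_\alpha)$. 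I expect the single delicate point to be well-definedness on the overlaps $\mathcal{Z}_\alpha\cap\mathcal{Z}_\beta$: if $\varphi_\alpha\circ\pi_\alpha=\psi_\beta\circ\pi_\beta$, choose $\delta\ge\alpha,\beta$ by directedness, write the common value as $\chi_\delta\circ\pi_\delta$, and use $\pi_\alpha=\pi_{\alpha,\delta}\circ\pi_\delta$ together with surjectivity of $\pi_\delta$ to deduce $\up{\gamma}_{\delta,\alpha}(\varphi_\alpha)=\chi_\delta=\up{\gamma}_{\delta,\beta}(\psi_\beta)$; the compatibility of $\{F_\alpha\}$ then forces $F_\alpha(\varphi_\alpha)=F_\delta(\chi_\delta)=F_\beta(\psi_\beta)$. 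Continuity of the glued $F$ follows from the inductive limit topology on $\mathcal{M}_{\mathcal{A}}$, since $F$ restricted to each $\mathcal{Z}_\alpha$ is $F_\alpha$ transported through the homeomorphism $\mathcal{M}_{\mathcal{A}_\alpha}\cong\mathcal{Z}_\alpha$ of Remark \ref{rmk: continuity and homeomorphism}. By construction $\Theta(F)=\{F_\alpha\}$, so $\Theta$ is onto.

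Finally I would record the topological identification. Equipping $C(\mathcal{M}_{\mathcal{A}})$ with the $C^{\ast}$-seminorms $\tilde{p}_\alpha(F)=\sup\{|F(\Phi)|\colon \Phi\in\mathcal{Z}_\alpha\}$ (finite because $\mathcal{Z}_\alpha$ is compact, and upward filtered because $\mathcal{Z}_\alpha\subseteq\mathcal{Z}_\beta$ for $\alpha\le\beta$), the identity $\tilde{p}_\alpha(F)=\|F\circ\up{\gamma}_\alpha\|_\infty=q_\alpha(\Theta(F))$ shows that $\Theta$ matches seminorms, so it is a topological $\ast$-isomorphism onto $\varprojlim_\alpha C(\mathcal{M}_{\mathcal{A}_\alpha})$. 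Since the latter is a locally $C^{\ast}$-algebra, completeness and the full locally $C^{\ast}$-structure transfer to $C(\mathcal{M}_{\mathcal{A}})$ through $\Theta$, giving the claimed equality. Everything outside the overlap-consistency step is bookkeeping resting on the homeomorphisms $\mathcal{Z}_\alpha\cong\mathcal{M}_{\mathcal{A}_\alpha}$ and the universal property already developed in the excerpt.
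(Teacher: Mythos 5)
Your proposal is correct, and its skeleton coincides with the paper's own proof: you build the same projective system (your $\rho_{\alpha,\beta}$ is the paper's $\xi_{\alpha,\beta}$, with surjectivity obtained from the Tietze extension theorem in both arguments), the same comparison maps (your $\Theta_\alpha$ is the paper's $\xi_\alpha$), the same compatibility computation via $\up{\gamma}_\beta \circ \up{\gamma}_{\beta,\alpha} = \up{\gamma}_\alpha$, and the same seminorms $\tilde{p}_\alpha(F)=\|F\circ\up{\gamma}_\alpha\|_\infty$. The genuine difference is where the argument stops. The paper establishes only that $\bigl(C(\mathcal{M}_{\mathcal{A}}),\{\xi_\alpha\}_{\alpha\in\Lambda}\bigr)$ is \emph{compatible} with the projective system and then concludes $C(\mathcal{M}_{\mathcal{A}})=\varprojlim_{\alpha}C(\mathcal{M}_{\mathcal{A}_\alpha})$ by its standing convention that projective limits are unique up to compatibility; strictly speaking, compatibility alone yields only the canonical map into the limit, not that this map is an isomorphism. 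Your proof supplies exactly that missing content: injectivity of $\Theta$ (a continuous function vanishing on every $\mathcal{Z}_\alpha$ vanishes on $\mathcal{M}_{\mathcal{A}}=\bigcup_{\alpha}\mathcal{Z}_\alpha$), and surjectivity by gluing a compatible family $\{F_\alpha\}_{\alpha\in\Lambda}$, where your overlap argument (pass to $\delta\geq\alpha,\beta$ by directedness and use surjectivity of $\pi_\delta$ to identify $\up{\gamma}_{\delta,\alpha}(\varphi_\alpha)=\up{\gamma}_{\delta,\beta}(\psi_\beta)$) is sound, and continuity of the glued function is legitimate because the inductive limit topology of Equation \eqref{eq: inductive limit topology} is final, so continuity on $\mathcal{M}_{\mathcal{A}}$ can be checked on each $\mathcal{Z}_\alpha$ through the homeomorphism $\mathcal{Z}_\alpha\cong\mathcal{M}_{\mathcal{A}_\alpha}$ of Remark \ref{rmk: continuity and homeomorphism}. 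What your route buys is a complete, self-contained identification (an explicit topological $\ast$-isomorphism matching seminorms); what the paper's buys is brevity and uniformity with how it treats all its other projective-limit identifications, such as $C(\mathbb{R})=\varprojlim_{n}C([-n,n])$, each of which rests on the same compatibility convention. One cosmetic remark: you invoke the universal property to produce $\Theta$ before fixing a locally convex topology on $C(\mathcal{M}_{\mathcal{A}})$; it is cleaner to simply define $\Theta(F)=\{F\circ\up{\gamma}_\alpha\}_{\alpha\in\Lambda}$ directly, observe that compatibility places the image inside the projective limit, and then carry out your injectivity, surjectivity, and seminorm computations, which need no universal property at all.
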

 \begin{proof}
 Let us consider the family $\Big\{ C(\mathcal{M}_{\mathcal{A}_\alpha})\Big\}_{\alpha \in \Lambda}$ of commutative unital $C^{\ast}$-algebras and whenever $\alpha \leq \beta,$ define $\xi_{\alpha,\beta} : C(\mathcal{M}_{\mathcal{A}_\beta}) \rightarrow C(\mathcal{M}_{\mathcal{A}_\alpha})$ by
 

\begin{equation*}
    \xi_{\alpha,\beta}(f_\beta) = f_\beta \circ \up{\gamma}_{\beta,\alpha} \text{ for all } f_\beta \in C(\mathcal{M}_{\mathcal{A}_\beta}),  
\end{equation*}
where $\up{\gamma}_{\beta,\alpha}(\varphi_\alpha)=\varphi_\alpha \circ \pi_{\alpha,\beta}$ is a continuous map from $\mathcal{M}_{\mathcal{A}_\alpha}$ to $\mathcal{M}_{\mathcal{A}_\beta}$ 
(see Section \ref{subsection: inductive limit of maximal ideal spaces}).
It is easy to see that, for $\alpha \leq \beta,$ $\xi_{\alpha,\beta}$ is a $C^{\ast}$-homomorphism. The surjectivity of $\xi_{\alpha, \beta}$ can be seen as follows: suppose $f_{\alpha} \in C(\mathcal{M}_{\mathcal{A}_{\alpha}})$ then the map $g_{\alpha} \colon \mathcal{Z}_{\alpha} \to \mathbb{C}$ given by 
\begin{equation*}
    g_{\alpha}(\varphi_{\alpha}\circ \pi_{\alpha}) = f_{\alpha}(\varphi_{\alpha}),\; \text{for all}\; \varphi_{\alpha} \in \mathcal{M}_{\mathcal{A}_{\alpha}}.
\end{equation*}
is a well-defined and continuous. Since $\mathcal{Z}_{\beta}$ is normal $\big($being compact Hausdorff space with respect to $\Omega_{\alpha}\big)$ and $\mathcal{Z}_{\alpha}$ is compact in $\mathcal{Z}_{\beta}$, by Tietze-extension theorem, there is a continuous map $g_{\beta} \colon \mathcal{Z}_{\beta} \to \mathbb{C}$ such that 
\begin{equation*}
    g_{\beta}(\varphi_{\alpha} \circ \pi_{\alpha}) = g_{\alpha}(\varphi_{\alpha} \circ \pi_\alpha),\; \text{for every}\; \varphi_{\alpha} \in \mathcal{M}_{\mathcal{A}_{\alpha}}.
\end{equation*}
Now define the map $f_{\beta} \colon \mathcal{M}_{\mathcal{A}_{\beta}} \to \mathbb{C}$ as,
\begin{equation*}
    f_{\beta}(\varphi_{\beta}) = g_{\beta}(\varphi_{\beta} \circ \pi_{\beta}), \; \text{for all}\; \varphi_{\beta} \in \mathcal{M}_{\mathcal{A}_{\beta}}.
\end{equation*}
It follows that $f_{\beta} \in C(\mathcal{M}_{\mathcal{A}_{\beta}}).$ Moreover, if $\varphi_{\alpha} \in \mathcal{M}_{\mathcal{A}_{\alpha}}$, then $\varphi_{\alpha} \circ \pi_{\alpha, \beta} \in \mathcal{M}_{\mathcal{A}_{\beta}}$ and we get \begin{align*}
    \xi_{\alpha, \beta}(f_{\beta}) (\varphi_{\alpha}) = (f_{\beta} \circ \up{\gamma}_{\beta, \alpha})(\varphi_{\alpha}) &= f_{\beta} \left( \varphi_{\alpha}\circ \pi_{\alpha, \beta}\right)\\
    &= g_{\beta}\left( (\varphi_{\alpha}\circ \pi_{\alpha, \beta}) \circ \pi_{\beta}\right)\\
    &= g_{\beta}\left( \varphi_{\alpha}\circ \pi_{\alpha}\right)\\
    &= g_{\alpha}\left( \varphi_{\alpha}\circ \pi_{\alpha}\right)\\
    &= f_{\alpha}(\varphi_{\alpha}).
\end{align*}
As $\varphi_{\alpha} \in  \mathcal{M}_{\mathcal{A}_{\alpha}}$ is chosen arbitrarily, we conclude that $\xi_{\alpha, \beta}(f_{\beta}) = f_{\alpha}.$ Thus $\xi_{\alpha, \beta}$ is a surjective $C^{\ast}$-homomorphism, whenever $\alpha \leq \beta.$ The following diagram gives a clear description about the surjectivity of $\xi_{\alpha,\beta}$ via bridge maps.
\begin{equation*}
\begin{tikzcd}[sep=large, every matrix/.append style={name=m},
    execute at end picture={
    \draw [<-] ([xshift=-2.5cm,yshift=1mm]m-2-2.west) arc[start angle=-90,delta angle=270,radius=0.7cm];
    \draw [<-] ([xshift=1.2cm,yshift=1mm]m-2-2.west) arc[start angle=-90,delta angle=270,radius=0.5cm]; 
    \draw [<-] ([xshift=0.5cm,yshift=2.4cm]m-2-2.west) arc[start angle=-90,delta angle=270,radius=3mm];
    \draw [<-] ([xshift=0.6cm,yshift=-1.8cm]m-2-2.west) arc[start angle=-90,delta angle=270,radius=3mm]; }]
    \mathcal{M}_{\mathcal{A}_\beta} \ar[blue,drr,"i_{\beta}^{-1} \circ \up{\gamma}_\beta"] \ar[blue,ddrrrrrr, bend left=20,"f_{\beta}"] &&&&&& \\
    && \mathcal{Z}_\beta \ar[blue,drrrr,"g_{\beta}"] &&&&\\
    &&&&&& \mathbb{C} \\
    && \mathcal{Z}_\alpha \ar[red,urrrr,"g_{\alpha}"] \ar[uu,hook,"j_{\beta,\alpha}",shorten = 2mm] &&&& \\
    \mathcal{M}_{\mathcal{A}_\alpha} \ar[uuuu,"\up{\gamma}_{\beta,\alpha}"] \ar[red,urr,"i_{\alpha}^{-1} \circ \up{\gamma}_\alpha"] \ar[red, uurrrrrr,swap,bend right=20,"f_{\alpha}"] &&&&&&
\end{tikzcd}
\end{equation*}

Further, whenever $\alpha \leq \beta\leq \delta$ and $f_{\delta} \in C(\mathcal{M}_{\mathcal{A}_\delta})$, by using Equation \eqref{eq: gamma transitivity} we see that
\begin{equation*}
    \left( \xi_{\alpha,\beta} \circ \xi_{\beta,\delta}\right)(f_\delta) = \xi_{\alpha,\beta}(f_\delta \circ \up{\gamma}_{\delta,\beta}) = \left(f_\delta \circ \up{\gamma}_{\delta,\beta}\right) \circ \up{\gamma}_{\beta,\alpha} = f_\delta \circ \up{\gamma}_{\delta,\alpha} = \xi_{\alpha, \delta}(f_{\delta}).
\end{equation*}
This shows that $\xi_{\alpha,\beta} \circ \xi_{\beta,\delta} = \xi_{\alpha,\delta}$ whenever  $\alpha \leq \beta \leq \delta$ and hence  $\left(\{C(\mathcal{M}_{\mathcal{A}_\alpha})\}_{\alpha \in \Lambda},\{\xi_{\alpha,\beta}\}_{\alpha \leq \beta}\right)$ is a projective system of $C^{\ast}$-algebras. 

On the other hand, for each $\alpha \in \Lambda$, we define $q_\alpha(f)=\|f \circ \up{\gamma}_\alpha\|_{\infty},$ where $\up{\gamma}_\alpha \colon \mathcal{M}_{\mathcal{A}_\alpha} \rightarrow \mathcal{M}_{\mathcal{A}}$ is a continuous map (see Subsection \ref{subsection: inductive limit of maximal ideal spaces}) and $f \in C(\mathcal{M}_\mathcal{A}).$ Note that $f \circ \up{\gamma}_\alpha \in C(\mathcal{M}_{\mathcal{A}_\alpha})$ and $\mathcal{M}_{\mathcal{A}_\alpha}$ is compact, so $f \circ \up{\gamma}_\alpha$ is bounded. Whenever $\alpha \leq \beta$ and $f \in C(\mathcal{M}_{\mathcal{A}})$, we have 
\begin{align*}
    q_{\alpha}(f) = \|f \circ \gamma_\alpha\|_{\infty} = \sup \Big\{|f \circ \gamma_\alpha(\varphi_\alpha)| \colon \varphi_\alpha \in \mathcal{M}_{\mathcal{A}_\alpha}\Big\} 
    &= \sup \Big\{|f(\varphi_\alpha \circ \pi_\alpha)| \colon \varphi_\alpha \in \mathcal{M}_{\mathcal{A}_\alpha}\Big\} \\
    &= \sup \Big\{\left|f\left( (\varphi_\alpha \circ \pi_{\alpha,\beta}) \circ \pi_\beta \right)\right| \colon \varphi_\alpha \in \mathcal{M}_{\mathcal{A}_\alpha}\Big\}.
    \end{align*}
    Since $\varphi_{\alpha} \circ \pi_{\alpha, \beta} \in \mathcal{M}_{\mathcal{A}_{\beta}}$ for every $\varphi_{\alpha} \in \mathcal{M}_{\mathcal{A}_{\alpha}}$, it follows that $q_{\alpha}(f) \leq q_{\beta}(f).$ Also, we get 
\begin{equation*}
    q_{\alpha}(f^{\ast}f) = q_{\alpha}(|f|^{2}) = \left\||f|^{2} \circ \up{\gamma}_{\alpha}\right\|_{\infty} = \sup\Big\{ |f(\varphi_{\alpha}\circ \pi_{\alpha})|^{2}:\; \varphi_{\alpha} \in \mathcal{M}_{\mathcal{A}_{\alpha}}\Big\} = q_{\alpha}(f)^{2}.
\end{equation*}
Thus $\big\{q_\alpha\big\}_{\alpha \in \Lambda}$ is an upward filtered family of $C^{\ast}$-seminorms on $C(\mathcal{M}_{\mathcal{A}})$ .  As a result, we consider the unital $\ast$-algebra $C(\mathcal{M}_{\mathcal{A}})$ equipped with the locally convex topology induced by the family $\big\{q_\alpha\big\}_{\alpha \in \Lambda}$. Finally, for each $\alpha \in \Lambda$, let $\xi_\alpha : C(\mathcal{M}_{\mathcal{A}}) \rightarrow C(\mathcal{M}_{\mathcal{A}_\alpha})$ be defined by 
\begin{equation*}
    \xi_\alpha(f) = f \circ \up{\gamma}_\alpha, \text{ for all } f \in C(\mathcal{M}_{\mathcal{A}}).
\end{equation*}
To show that $\xi_{\alpha}$ is continuous, let $U$ be open in $C(\mathcal{M}_{\mathcal{A}_\alpha})$ given by
 $   U=\left\{g \in C(\mathcal{M}_{\mathcal{A}_\alpha}) \colon \|g\|_{\infty} < r\right\},$ for some $r>0.$
Then,
\begin{align*}
    \xi^{-1}_{\alpha}(U)&=\left\{f \in C(\mathcal{M}_{\mathcal{A}}) \colon \|f \circ \up{\gamma}_\alpha\|_{\infty} < r\right\} \\
    &= \left\{f \in C(\mathcal{M}_{\mathcal{A}}) \colon q_\alpha(f) < r\right\} \\
    &= q^{-1}_{\alpha}(B(0,r)),
\end{align*}
where $B(0,r)$ is an open ball in $\mathbb{C}$ of radius $r$ centered at $0,$ is open in locally convex topology in $C(\mathcal{M}_{\mathcal{A}}).$ Thus, $\xi_\alpha$ is continuous. Further, we have 
\begin{equation*}
   \xi_{\alpha,\beta} \circ \xi_\beta(f)  = \xi_{\alpha,\beta}(f \circ \up{\gamma}_\beta) 
    = f \circ \up{\gamma}_\beta \circ \up{\gamma}_{\beta,\alpha} 
    = f \circ \up{\gamma}_\alpha 
    = \xi_\alpha(f), 
\end{equation*}
for all $f \in C(\mathcal{M}_{\mathcal{A}})$ whenever $\alpha \leq \beta.$ This shows that $\big(C(\mathcal{M}_{\mathcal{A}}),\,\{\xi_\alpha\}_{\alpha \in \Lambda}\big)$ is compatible with the projective system $\big(\{C(\mathcal{M}_{\mathcal{A}_\alpha})\}_{\alpha \in \Lambda}, \{\xi_{\alpha,\beta}\}_{\alpha \leq \beta}\big)$ and hence 
\begin{equation*}
    C(\mathcal{M}_{\mathcal{A}})=\varprojlim\limits_{\alpha \in \Lambda}C(\mathcal{M}_{\mathcal{A}_\alpha}).
\end{equation*}
\end{proof}
\begin{rmk}\label{rmk: identification net}
    Every $f \in C(\mathcal{M}_\mathcal{A})$ can be identified with the net $\big\{f \circ \gamma_\alpha\big\}_{\alpha \in \Lambda} \in \prod\limits_{\alpha \in \Lambda}C(\mathcal{M}_{\mathcal{A}_\alpha}).$ Indeed the net $\big\{f \circ \gamma_\alpha\big\}_{\alpha \in \Lambda}$ is such that $\xi_{\alpha,\beta}(f \circ \gamma_\beta)=f \circ \gamma_\beta \circ \gamma_{\beta,\alpha}=f \circ \gamma_\alpha,$ whenever $\alpha \leq \beta.$ 
\end{rmk}
\subsection{Gelfand type representation}\label{subsection: Gelfand type representation}
Recall that $\mathcal{A}$ is a commutative unital locally $C^{\ast}$-algebra and for each $\alpha \in \Lambda$, the Gelfand representation of $C^{\ast}$- algebra $\mathcal{A}_{\alpha}$, denoted by $\Gamma_{\alpha}$ (see Equation \eqref{eq: Gelfand map}). We give a Gelfand type representation for $\mathcal{A}$ and establish a connection with the family $\{\Gamma_{\alpha}\}_{\alpha \in \Lambda}.$ Since the character space $\mathcal{M}_{\mathcal{A}}$ consists of all those multiplicative linear functional of the form $\varphi_{\alpha} \circ \pi_{\alpha}$ for some $\alpha \in \Lambda$ and $\varphi_{\alpha} \in \mathcal{M}_{\mathcal{A}_{\alpha}}$, we define $\Gamma \colon \mathcal{A} \rightarrow C(\mathcal{M}_{\mathcal{A}})$ by 
\begin{equation} \label{eq: Gelfand type representation}
    \Gamma(a)(\varphi_{\alpha}\circ \pi_{\alpha})=\varphi_\alpha \circ \pi_\alpha(a)=\varphi_\alpha(a+\mathcal{I}_\alpha), \text{ for all } a \in \mathcal{A}.
\end{equation}
In the following theorem we show that $\Gamma$ is a coherent representation of locally $C^{\ast}$-algebras. 
\begin{thm}\label{Thm: gelfand coherent}
    The map $\Gamma$ defined in Equation \eqref{eq: Gelfand type representation} is a coherent representation of locally $C^{\ast}$-algebras $\mathcal{A}$ and $C(\mathcal{M}_\mathcal{A}).$ In fact, $\Gamma$ is a unital local contractive (isometric) $\ast$- homomorphism. 
\end{thm}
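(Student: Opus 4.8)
The plan is to exhibit the Gelfand maps $\{\Gamma_\alpha\}_{\alpha \in \Lambda}$ of the quotient $C^{\ast}$-algebras as the component maps witnessing coherence, and then to transport their isometry property through the projective limit identifications established in Theorem \ref{Thm: C(M_A) is local C* alg}. First I would check that $\Gamma$ is well-defined as a map into $C(\mathcal{M}_\mathcal{A})$. Since the value $\Gamma(a)(\Phi)=\Phi(a)$ depends only on the functional $\Phi \in \mathcal{M}_\mathcal{A}$ and not on a choice of representation $\Phi = \varphi_\alpha \circ \pi_\alpha$, the assignment $\Gamma(a)$ is unambiguously defined on $\mathcal{M}_\mathcal{A}$. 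To see it is continuous, I would invoke the defining property of the inductive limit topology (Equation \eqref{eq: inductive limit topology}): a function on $\mathcal{M}_\mathcal{A}$ is continuous if and only if its composition with each $\up{\gamma}_\alpha$ is continuous. A direct computation gives
\[
\big(\Gamma(a) \circ \up{\gamma}_\alpha\big)(\varphi_\alpha) = \Gamma(a)(\varphi_\alpha \circ \pi_\alpha) = \varphi_\alpha(a + \mathcal{I}_\alpha) = \Gamma_\alpha\big(\pi_\alpha(a)\big)(\varphi_\alpha),
\]
so that $\Gamma(a) \circ \up{\gamma}_\alpha = \Gamma_\alpha(\pi_\alpha(a)) \in C(\mathcal{M}_{\mathcal{A}_\alpha})$ is continuous. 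Hence $\Gamma(a) \in C(\mathcal{M}_\mathcal{A})$.

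Next I would verify the algebraic properties pointwise. Each $\Phi \in \mathcal{M}_\mathcal{A}$ is a unital multiplicative linear functional, and since $\Phi = \varphi_\alpha \circ \pi_\alpha$ with $\varphi_\alpha$ a character on the $C^{\ast}$-algebra $\mathcal{A}_\alpha$ and $\pi_\alpha$ a $\ast$-homomorphism, one has $\Phi(a^{\ast}) = \overline{\Phi(a)}$ and $\Phi(1_\mathcal{A}) = 1$. Evaluating $\Gamma$ at an arbitrary $\Phi$ then shows at once that $\Gamma$ is linear, multiplicative, $\ast$-preserving and unital; for instance $\Gamma(a^{\ast})(\Phi) = \Phi(a^{\ast}) = \overline{\Phi(a)} = \Gamma(a)^{\ast}(\Phi)$ and $\Gamma(1_\mathcal{A}) \equiv 1$, the unit of $C(\mathcal{M}_\mathcal{A})$.

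The coherence is then immediate from the computation above. Taking the component maps to be the Gelfand representations $\Gamma_\alpha \colon \mathcal{A}_\alpha \to C(\mathcal{M}_{\mathcal{A}_\alpha})$ and recalling that $\xi_\alpha(f) = f \circ \up{\gamma}_\alpha$, the identity $\Gamma(a) \circ \up{\gamma}_\alpha = \Gamma_\alpha(\pi_\alpha(a))$ reads precisely
\[
\xi_\alpha \circ \Gamma = \Gamma_\alpha \circ \pi_\alpha, \qquad \alpha \in \Lambda,
\]
which is the defining diagram of Definition \ref{defn: coherent proj}. Finally, to obtain the local isometry I would use that each $\Gamma_\alpha$ is an isometric $\ast$-isomorphism (Gelfand's theorem for the commutative unital $C^{\ast}$-algebra $\mathcal{A}_\alpha$), together with $\|a+\mathcal{I}_\alpha\|_\alpha = p_\alpha(a)$:
\[
q_\alpha\big(\Gamma(a)\big) = \big\|\Gamma(a) \circ \up{\gamma}_\alpha\big\|_{\infty} = \big\|\Gamma_\alpha(\pi_\alpha(a))\big\|_{\infty} = \|\pi_\alpha(a)\|_\alpha = p_\alpha(a),
\]
for every $\alpha \in \Lambda$. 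Thus $\Gamma$ preserves each defining $C^{\ast}$-seminorm and is therefore local isometric, and in particular local contractive.

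I do not anticipate a serious obstacle, as the argument is largely bookkeeping organized around the single identity $\Gamma(a) \circ \up{\gamma}_\alpha = \Gamma_\alpha(\pi_\alpha(a))$. The one point requiring genuine care is the continuity of $\Gamma(a)$: one must argue through the inductive limit topology on $\mathcal{M}_\mathcal{A}$ rather than the weak$^{\ast}$-topology, which by Remark \ref{rmk: continuity and homeomorphism} is strictly coarser, so continuity with respect to the finer topology is what must be checked. Once that identity is in hand, the coherence relation and all the seminorm equalities follow from the same computation.
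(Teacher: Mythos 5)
Your proof is correct and follows essentially the same route as the paper: both hinge on the single identity $\xi_\alpha \circ \Gamma = \Gamma_\alpha \circ \pi_\alpha$, with the quotient Gelfand maps $\{\Gamma_\alpha\}_{\alpha\in\Lambda}$ as the coherence components and the isometry of each $\Gamma_\alpha$ yielding $q_\alpha(\Gamma(a)) = p_\alpha(a)$. Your explicit check that $\Gamma(a)$ is continuous for the inductive limit topology (via $\Gamma(a)\circ\up{\gamma}_\alpha = \Gamma_\alpha(\pi_\alpha(a))$) is a point the paper glosses over, while the paper additionally records the intertwining $\xi_{\alpha,\beta}\circ\Gamma_\beta = \Gamma_\alpha\circ\pi_{\alpha,\beta}$, which Definition \ref{defn: coherent proj} does not actually require.
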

\begin{proof}
    We know from Section \ref{section: loc maximal ideal sp} that $\big(\mathcal{A},\{\pi_{\alpha}\}_{\alpha \in \Lambda}\big)$ is a projective limit of $\big(\{\mathcal{A}_\alpha\}_{\alpha \in \Lambda},\; \{\pi_{\alpha,\beta}\}_{\alpha \leq \beta}\}\big)$ and as shown in Theorem \ref{Thm: C(M_A) is local C* alg}\; $\big(C(\mathcal{M}_{\mathcal{A}}),\; \{\xi_\alpha\}_{\alpha \in \Lambda}\big)$ is a projective limit of $\big(\{C(\mathcal{M}_{\mathcal{A}_\alpha})\}_{\alpha \in \Lambda},\; \{\xi_{\alpha,\beta}\}_{\alpha \leq \beta}\big).$ Firstly note that, for a fixed $\alpha \in \Lambda,\; a \in \mathcal{A},\; \varphi_\alpha \in \mathcal{M}_{\mathcal{A}_\alpha},$ we see that
    \begin{align*}
        \xi_\alpha\big(\Gamma(a)\big)(\varphi_\alpha)=\Gamma(a)\big( \gamma_\alpha(\varphi_\alpha)\big)=\Gamma(a)(\varphi_\alpha \circ \pi_\alpha)&=(\varphi_\alpha \circ \pi_\alpha)(a)\\
        &=\varphi_\alpha(a+\mathcal{I}_\alpha) \\
        &=\Gamma_\alpha(a+\mathcal{I}_\alpha)(\varphi_\alpha) \\
        &=\Gamma_\alpha\left(\pi_\alpha(a)\right)(\varphi_\alpha) \\
        &= \big(\Gamma_\alpha \circ \pi_\alpha\big)(a)(\varphi_\alpha).
    \end{align*}
    Since $a \in \mathcal{A}$ and $\varphi_\alpha \in \mathcal{M}_{\mathcal{A}_\alpha}$ are chosen arbitrarily, we conclude that 
    \begin{equation*}
        \xi_\alpha \circ \Gamma= \Gamma_\alpha \circ \pi_\alpha, \text{ for every } \alpha \in \Lambda.
    \end{equation*}
    Here we have shown that, there exists a net $\big\{\Gamma_\alpha\big\}_{\alpha \in \Lambda}$ of $C^{\ast}$-representations $\Gamma_\alpha \colon \mathcal{A}_\alpha \to C(\mathcal{M}_{\mathcal{A}_\alpha}),\; \alpha \in \Lambda$ such that the following diagram commutes:
    \begin{equation*}
    \begin{tikzcd}[every matrix/.append style={name=m},
    execute at end picture={
    \draw [<-] ([xshift=4mm,yshift=3mm]m-2-2.west) arc[start angle=-90,delta angle=270,radius=0.33cm]; }] 
    \mathcal{A}  \arrow[thick,swap] {dd}{\Gamma} \arrow[thick]{rr}{\pi_\alpha} && \mathcal{A}_\alpha \arrow[thick]{dd}{\Gamma_\alpha} \\
     && \\
    C(\mathcal{M}_{\mathcal{A}})  \arrow[thick,swap]{rr}{\xi_\alpha} && C(\mathcal{M}_{\mathcal{A}_\alpha})   
    \end{tikzcd}
\end{equation*}
Equivalently, whenever $\alpha \leq \beta,\; \varphi_\alpha \in \mathcal{M}_{\mathcal{A}_\alpha}$ and $a \in \mathcal{A},$ we have
\begin{align*}
    \xi_{\alpha,\beta}\big( \Gamma_\beta(a+\mathcal{I}_\beta)\big)(\varphi_\alpha) &= \Gamma_\beta(a+\mathcal{I}_\beta) \circ \gamma_{\beta,\alpha}(\varphi_\alpha) \\
    &= \Gamma_\beta(a+\mathcal{I}_\beta)(\varphi_\alpha \circ \pi_{\alpha,\beta}) \\
    &= \varphi_\alpha \circ \pi_{\alpha,\beta}(a+\mathcal{I}_\beta) \\
    &= \varphi_\alpha(\pi_{\alpha,\beta}(a+\mathcal{I}_\beta)) \\
    &=\big(\Gamma_\alpha \circ \pi_{\alpha,\beta}\big)(a+\mathcal{I}_\beta)(\varphi_\alpha).
\end{align*}
Since $a \in \mathcal{A}$ and $\varphi_\alpha \in \mathcal{M}_{\mathcal{A}_\alpha}$ are chosen arbitrarily, we get that 
    \begin{equation*}
        \xi_{\alpha,\beta} \circ \Gamma_\beta=\Gamma_\alpha \circ \pi_{\alpha,\beta}, \text{ whenever } \alpha \leq \beta.
    \end{equation*}
    Hence $\Gamma$ is a coherent representation of locally $C^{\ast}$-algebras. Now we show that $\Gamma$ is a unital local contractive (isometric) $\ast$-homomorphism. If $1_{\mathcal{A}} \in \mathcal{A}$ is a unit, then 
    \begin{equation*}
       \Gamma(1_{\mathcal{A}})(\varphi_\alpha \circ \pi_\alpha)=\varphi_\alpha(\pi_\alpha(1_\mathcal{A}))=\varphi_\alpha(1_{\mathcal{A}_\alpha})=1
    \end{equation*} 
    for every $a \in \mathcal{A}$ and $\varphi_\alpha \in \mathcal{M}_{\mathcal{A}_\alpha},$ so $\Gamma$ is unital. Clearly, $\Gamma$ is a homomorphism. For $a \in \mathcal{A}$ and $\varphi_{\alpha} \in \mathcal{M}_{\mathcal{A}_{\alpha}},\; \alpha \in \Lambda$ we have
    \begin{equation*}
    \Gamma(a^{\ast})(\varphi_\alpha \circ \pi_\alpha)=\varphi_\alpha\big(a^{\ast}+\mathcal{I}_\alpha\big)=\varphi_\alpha\big((a+\mathcal{I}_\alpha)^{\ast}\big)=\varphi_\alpha\big(a+\mathcal{I}_\alpha\big)^{\ast}=\Gamma(a)^{\ast}(\varphi_\alpha \circ \pi_\alpha).
\end{equation*}
Thus $\Gamma(a^{\ast})=\Gamma(a)^{\ast}.$ Further, by using the fact that $\Gamma_{\alpha}$ is an isometry for each $\alpha \in \Lambda$, we get
\begin{align*}
    q_\alpha(\Gamma(a))=\|\Gamma(a)\circ \up{\gamma}_\alpha\|_{\infty}  &= \sup \Big\{ |\left(\Gamma(a)\circ \up{\gamma}_\alpha \right) (\varphi_{\alpha})|:\; \varphi_{\alpha} \in \mathcal{M}_{\mathcal{A}_{\alpha}}\Big\} \\
    &= \sup \Big\{| \Gamma(a)\left(\varphi_{\alpha}\circ \pi_{\alpha} \right)| :\; \varphi_{\alpha} \in \mathcal{M}_{\mathcal{A}_{\alpha}}\Big\} \\
     &= \sup \Big\{ |\varphi_{\alpha}(\pi_{\alpha}(a))| :\; \varphi_{\alpha} \in \mathcal{M}_{\mathcal{A}_{\alpha}}\Big\} \\
     &= \sup \Big\{ |\Gamma_{\alpha}\left( \pi_{\alpha}(a)\right)(\varphi_{\alpha})| :\; \varphi_{\alpha} \in \mathcal{M}_{\mathcal{A}_{\alpha}}\Big\} \\
     &= \left\|\Gamma_{\alpha}\left( \pi_{\alpha}(a)\right) \right\|_{\infty}\\
     &= \|\pi_{\alpha}(a)\|\\
     &=p_{\alpha}(a),
\end{align*}
for every $\alpha \in \Lambda.$ Hence $\Gamma$ is a local contractive (isometric) $\ast$-homomorphism. 
\end{proof}
\begin{note}
    An appeal to Remark \ref{rmk: identification net}, we see that for every $a \in \mathcal{A},$ the continuous function $\Gamma(a)$ on $\mathcal{M}_\mathcal{A}$ is identified with the net $\big\{\Gamma(a) \circ \gamma_\alpha\big\}_{\alpha \in \Lambda}.$ However, it follows from Theorem \ref{Thm: gelfand coherent} that $\Gamma(a) \circ \gamma_\alpha=\Gamma_\alpha \circ \pi_\alpha(a),$ for each $\alpha \in \Lambda.$ As a result, $\Gamma$ can be identified with $\{\Gamma_\alpha\}_{\alpha \in \Lambda}$ in the sense that
    \begin{equation*}
        \xi_\alpha\big(\Gamma(a)\big)=\Gamma_\alpha\big(\pi_\alpha(a)\big), \text{ for all }a \in \mathcal{A}.
    \end{equation*}
\end{note}
\subsection{The Continuous Functional Calculus}\label{section: cont. functional calculus}
In this subsection, we define the functional calculus of locally bounded normal operator on a locally Hilbert space. Recall that if $\mathcal{E}:=\{\mathcal{H}_{\alpha}\}_{\alpha \in \Lambda}$ is an upward filtered family of Hilbert spaces and $\mathcal{D}:= \varinjlim\limits_{\alpha \in \Lambda} \mathcal{H}_{\alpha} = \bigcup\limits_{\alpha \in \Lambda}\mathcal{H}_{\alpha}$ is a locally Hilbert space, then $C^{\ast}_{\mathcal{E}}(\mathcal{D})$ is a unital locally $C^{\ast}$-algebra \big(see (1) of Definition \ref{defn: locally von neumann alg}\big).  Firstly, we discuss the spectrum of locally bounded operators. Let $T \in C^{\ast}_{\mathcal{E}}(\mathcal{D}).$ Then $T$ is locally bounded, that is, each $\mathcal{H}_{\alpha}$ is reducing under $T$ and $T_{\alpha}:= T\big|_{\mathcal{H}_{\alpha}} \in \mathcal{B}(\mathcal{H}_{\alpha})$ for every $\alpha \in \Lambda$. The \textit{local resolvent} of $T$ is defined as, 
\begin{equation*}
    \rho_{\it loc}(T) := \Big\{\lambda \in \mathbb{C} \colon \;  \left(\lambda \cdot I_{\mathcal{D}}-T \right) \text{ is invertible in}\; C^{\ast}_{\mathcal{E}}(\mathcal{D})\Big\},
\end{equation*}
where $I_{\mathcal{D}}$ is the identity operator on $\mathcal{D}.$ In other words, $\lambda \in \rho_{loc}(T)$ if and only if there is a unique (locally bounded operator)\; $S \in C_{\mathcal{E}}^{\ast}(\mathcal{D})$ such that 
\begin{equation} \label{eq: invertible}
    S\left(\lambda I_{\mathcal{D}}-T\right) = \left(\lambda I_{\mathcal{D}}-T\right) S = I_{\mathcal{D}}. 
\end{equation}  
The {\it local spectrum} of $T$ is denoted by $\sigma_{\it loc}(T)$ and we define this as, 
\begin{equation}\label{eq: local spectrum}
    \sigma_{ loc}(T) = \mathbb{C}\setminus \rho_{\textit loc}(T).
\end{equation}
\begin{thm}\label{Theorem: local spectrum}
    Let $T \in C^{\ast}_{\mathcal{E}}(\mathcal{D})$. Then $\sigma_{loc}(T)$ is a non-empty subset of $\mathbb{C}$. Moreover, 
    \begin{equation*}
        \sigma_{loc}(T) = \bigcup\limits_{\alpha \in \Lambda} \sigma(T_{\alpha}),
    \end{equation*}
    where $T_{\alpha}:= T\big|_{\mathcal{H}_{\alpha}}$ and $\sigma(T_{\alpha})$ is the spectrum of the bounded operator $T_{\alpha}.$
\end{thm}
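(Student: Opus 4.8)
The plan is to transport the entire question into the projective-limit picture supplied by Remark \ref{rmk: locally bounded operator}, where $C^{\ast}_{\mathcal{E}}(\mathcal{D}) = \varprojlim_{\alpha \in \Lambda} \mathcal{B}(\mathcal{H}_\alpha)$ and every $T$ is written as the coherent net $\{T_\alpha\}_{\alpha \in \Lambda}$ with $T_\alpha = T\big|_{\mathcal{H}_\alpha}$ and connecting maps the restrictions $\phi_{\alpha,\beta}$. Under this identification $\lambda I_{\mathcal{D}} - T$ corresponds to the net $\{\lambda I_{\mathcal{H}_\alpha} - T_\alpha\}_{\alpha \in \Lambda}$, so the heart of the argument is to characterize invertibility of an element of the projective limit in terms of its components.

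The key step I would isolate as a lemma: an element $x = \{x_\alpha\}_{\alpha}$ of $\varprojlim_\alpha \mathcal{B}(\mathcal{H}_\alpha)$ is invertible in $C^{\ast}_{\mathcal{E}}(\mathcal{D})$ if and only if each $x_\alpha$ is invertible in $\mathcal{B}(\mathcal{H}_\alpha)$. The forward implication is immediate by applying the coordinate projections $\phi_\alpha$. For the converse, set $y_\alpha := x_\alpha^{-1}$; since each connecting map $\phi_{\alpha,\beta}$ is a unital $\ast$-homomorphism it preserves inverses, so $\phi_{\alpha,\beta}(y_\beta) = \phi_{\alpha,\beta}(x_\beta)^{-1} = x_\alpha^{-1} = y_\alpha$ whenever $\alpha \leq \beta$. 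Thus $\{y_\alpha\}_\alpha$ is again a coherent net, hence lies in the projective limit, and it is a two-sided inverse of $x$ computed componentwise. Applying this with $x_\alpha = \lambda I_{\mathcal{H}_\alpha} - T_\alpha$ shows that $\lambda \in \rho_{loc}(T)$ if and only if $\lambda \in \rho(T_\alpha)$ for every $\alpha$, that is $\rho_{loc}(T) = \bigcap_{\alpha \in \Lambda} \rho(T_\alpha)$. Taking complements in $\mathbb{C}$ and applying De Morgan's law then yields
\[
\sigma_{loc}(T) = \mathbb{C} \setminus \bigcap_{\alpha \in \Lambda} \rho(T_\alpha) = \bigcup_{\alpha \in \Lambda} \big(\mathbb{C} \setminus \rho(T_\alpha)\big) = \bigcup_{\alpha \in \Lambda} \sigma(T_\alpha),
\]
which is the asserted equality.

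For non-emptiness I would note that $\mathcal{D} = \bigcup_\alpha \mathcal{H}_\alpha$ is nonzero, so by the nesting $\mathcal{H}_\alpha \subseteq \mathcal{H}_\beta$ and directedness of $\Lambda$ there is some $\alpha_0$ with $\mathcal{H}_{\alpha_0} \neq \{0\}$. Then $\mathcal{B}(\mathcal{H}_{\alpha_0})$ is a unital Banach algebra with nonzero unit, so the spectrum $\sigma(T_{\alpha_0})$ of the bounded operator $T_{\alpha_0}$ is non-empty by the classical Gelfand theory. Since $\sigma(T_{\alpha_0}) \subseteq \bigcup_\alpha \sigma(T_\alpha) = \sigma_{loc}(T)$, the local spectrum is non-empty.

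The step that carries the real content, and the one I would scrutinize most carefully, is the invertibility lemma, specifically the claim that the inverses $\{x_\alpha^{-1}\}$ automatically assemble into an element of the projective limit rather than merely an incompatible family of componentwise inverses. This is exactly where the hypothesis that the bonding maps are unital $\ast$-homomorphisms is used; for a projective limit with less structured connecting maps, componentwise invertibility need not lift. Once this coherence is secured the remainder is purely formal complementation, and no completeness or topological input beyond the identification in Remark \ref{rmk: locally bounded operator} is required.
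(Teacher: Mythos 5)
Your proof is correct and takes essentially the same route as the paper's: both reduce local invertibility of $\lambda I_{\mathcal{D}}-T$ to componentwise invertibility of $\lambda I_{\mathcal{H}_\alpha}-T_\alpha$ via the identification $C^{\ast}_{\mathcal{E}}(\mathcal{D})=\varprojlim_{\alpha\in\Lambda}\mathcal{B}(\mathcal{H}_\alpha)$, obtain $\rho_{loc}(T)=\bigcap_{\alpha\in\Lambda}\rho(T_\alpha)$, and pass to complements, with non-emptiness coming from non-emptiness of each compact $\sigma(T_\alpha)$. The one substantive difference is completeness: the paper's written proof only argues the inclusion $\rho_{loc}(T)\subseteq\bigcap_{\alpha\in\Lambda}\rho(T_\alpha)$ (by restricting the global inverse $S$ to each $\mathcal{H}_\alpha$) and then asserts equality, whereas you explicitly prove the reverse inclusion by showing that the componentwise inverses $\{x_\alpha^{-1}\}_{\alpha\in\Lambda}$ form a coherent net because the bonding maps are unital $\ast$-homomorphisms; this is exactly the step the paper glosses over, and isolating it as a lemma is the right way to make the argument airtight.
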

\begin{proof}
    Let $\lambda \in \mathbb{C}.$ Then $\lambda \in \rho_{loc}(T)$ if and only if there is a unique  $S \in C^{\ast}_{\mathcal{E}}(\mathcal{D})$ satisfying Equation \eqref{eq: invertible}. Since $S$ is locally bounded, we know that $S_{\alpha}:= S\big|_{\mathcal{H}_{\alpha}} \in \mathcal{B}(\mathcal{H}_{\alpha})$ and 
    \begin{equation*}
        S_{\alpha} (\lambda I_{\mathcal{H}_{\alpha}} - T_{\alpha}) =  (\lambda I_{\mathcal{H}_{\alpha}} - T_{\alpha}) S_{\alpha} = I_{\mathcal{H}_{\alpha}}, \; \text{for every}\; \alpha \in \Lambda. 
    \end{equation*}
    That is, $\lambda \in \rho(T_{\alpha})$ for every $\alpha \in \Lambda.$ We have shown that $\rho_{loc}(T) = \bigcap\limits_{\alpha \in \Lambda} \rho(T_{\alpha}).$ Equivalently, 
    \begin{equation*}
        \sigma_{loc}(T) = \bigcup\limits_{\alpha \in \Lambda}\sigma(T_{\alpha}).
    \end{equation*}
    Since $\sigma(T_{\alpha})$ is a non-empty compact subset of $\mathbb{C}$, it follows that $\sigma_{loc}(T)$ is non-emtpy.  
\end{proof}
\begin{rmk} A locally bounded operator $T \colon \mathcal{D} \to \mathcal{D}$ can be seen as a densely defined (unbounded) operator, not necessarily closed. 
    Suppose $\mathcal{H}$ is the completion of $\mathcal{D}$. The (unbounded) spectrum of $T \in C^{\ast}_{\mathcal{E}}(\mathcal{D})$ is given by 
    \begin{equation*}
        \sigma(T) = \big\{ \lambda \in \mathbb{C}:\; (\lambda I_{\mathcal{D}}-T)\; \text{does not have a bounded inverse}\big\}.
    \end{equation*}
    The reason we call the notion of spectrum of locally bounded operator $T$ as ``local spectrum'' and denoted by $\sigma_{loc}(T)$ in order to point out that it is different from $\sigma(T).$ The same is described in the following example. 
\end{rmk}
\begin{eg} \label{eg: number matrix}
    Let us consider the locally Hilbert space $\mathcal{D} = \bigcup\limits_{n\in \mathbb{N}} \mathcal{H}_{n}$, where $\mathcal{H}_{n}:= span\{e_{1}, e_{2}, \cdots, e_{n}\}$ subspace of $\ell^{2}(\mathbb{N})$ \big(as in Example \ref{example: loc hilbert eg} \big) for each $n \in \mathbb{N}$. Now define $T : \mathcal{D} \rightarrow \mathcal{D}$ by 
\[
T = \begin{pmatrix}
1 & 0 & 0 & 0 & 0 & \cdots \\
0 & \frac{1}{2} & 0 & 0 & 0 & \cdots \\
0 & 0 & 3 & 0 & 0 & \cdots \\
0 & 0 & 0 & \frac{1}{4} & 0 & \cdots \\
0 & 0 & 0 & 0 & 5 & \cdots \\
\vdots & \vdots & \vdots & \vdots & \vdots & \ddots
\end{pmatrix}
\]
That is, $T(x_{1}, x_{2}, x_{3}, x_{4}, \cdots) = \left(x_{1}, \frac{x_{2}}{2}, 3 x_{3}, \frac{x_{4}}{4}, \cdots \right)$ for every $\{x_{n}\}_{n\in \mathbb{N}} \in \mathcal{D}.$ It follows that each  $\mathcal{H}_{n}$ is reducing under $T$ and $T_n=T\big|_{\mathcal{H}_n} \in \mathcal{B}(\mathcal{H}_{n})$ and so, $T \in C^{\ast}_{\mathcal{E}}(\mathcal{D}).$ Also, $T$ is an unbounded operator. Now we show that $T$ is not closed. Let us consider the sequence $\{X_n\}_{n \in \mathbb{N}}$ in $\mathcal{D},$ where 
\begin{equation*}
    X_{1} = e_{1} \;\; \text{and}\;\;  X_{n} = e_{1} + \sum\limits_{k=1}^{n-1} \frac{1}{2k} e_{2k},\;\; \text{for}\; n \geq 1.
\end{equation*}
If $ X := e_{1} + \sum\limits_{k = 1}^{\infty} \frac{1}{2k}e_{2k},\; Y:= e_{1} + \sum\limits_{k = 1}^{\infty} \frac{1}{4k^2}e_{2k}$, then $X,Y \in \ell^{2}(\mathbb{N})$ such that  
\begin{equation*}
   \big\{ X_{n}\big\}_{n \in \mathbb{N}} \longrightarrow X  \; \text{and} \; \big\{T(X_{n})\big\}_{n\in \mathbb{N}} = \left\{ e_{1} + \sum\limits_{k=1}^{n-1} \frac{1}{4k^2} e_{2k}\right\}_{n\in \mathbb{N}} \longrightarrow Y,
\end{equation*}
as $n \to \infty,$ but $X \notin \mathcal{D}.$ This shows that $T$ is not a closed operator. 

Firstly, note that each $T_{n}$ is a finite rank operator. So, the spectrum of $T_{n}$ is computed as,
\[
  \sigma(T_n)=\left\{ \begin{array}{cc}
    \big\{1,\frac{1}{2},3\cdots 2m-1\big\}, \;& \text{if $n=2m-1$ is odd};\\
     & \\
    \big\{1,\frac{1}{2},3\cdots 2m-1,\frac{1}{2m}\big\}, & \text{if $n=2m$ is even}.
  \end{array} \right.
\]
Therefore, the local spectrum of $T$ is,
\begin{equation*}
    \sigma_{loc}(T) = \Big\{ 1, \frac{1}{2}, 3, \frac{1}{4}, 5, \cdots\Big\}.
\end{equation*}
On the other hand, the (unbounded) spectrum of $T$, is given by $\sigma(T) = \big\{2k-1, \frac{1}{2k} : k \in \mathbb{N}\big\} \cup \{0\}$. Indeed, $0$ is in \textit{approximate point spectrum} of $T$. Since $0 \notin \sigma_{loc}(T)$, we conclude that  $\sigma_{loc}(T) \subsetneq \sigma(T).$ 
\end{eg}
\begin{note}
Suppose that  $T \in C^{\ast}_{\mathcal{E}}(\mathcal{D})$ is a normal operator, then it follows that  $T_\alpha=T\big|_{\mathcal{H}_\alpha}$ is a bounded normal operator on $\mathcal{H}_{\alpha}$ for every $\alpha \in \Lambda.$ Now, for each $\alpha \in \Lambda$, consider $C^{\ast}\{I_{\mathcal{H}_{\alpha}},\;T_{\alpha},T^{\ast}_\alpha\},$ the $C^{\ast}$-algebra generated by $T_\alpha.$  Since $T_\alpha$ is normal, $C^{\ast}\{I_{\mathcal{H}_{\alpha}},\;T_{\alpha},T^{\ast}_\alpha\}$ is a commutative unital $C^{\ast}$-algebra and the class of polynomials $p(T_{\alpha}, T^{\ast}_{\alpha})$ is dense in $C^{\ast}\{I_{\mathcal{H}_{\alpha}}, T_{\alpha}, T_{\alpha}^{\ast}\}$ by \cite[Proposition 10.1]{kehezhu} . Whenever $\alpha \leq \beta$, define  $\phi_{\alpha, \beta}\big( p(T_{\beta}, T_{\beta}^{\ast})\big) = p(T_{\alpha}, T_{\alpha}^{\ast})$  for every polynomial $p(T_{\beta}, T_{\beta}^{\ast}) \in C^{\ast}\{I_{\mathcal{H}_{\beta}}T_{\beta}, T_{\beta}^{\ast}\}$. By using the fact that $T_{\beta}\big|_{\mathcal{H}_{\alpha}} = T_{\alpha}$, we see that  $\phi_{\alpha, \beta}$ is surjective and 
\begin{align*}
    \|\phi_{\alpha, \beta}\big( p(T_{\beta}, T_{\beta}^{\ast})\big)\|_{\mathcal{H}_\alpha}=\| p(T_{\alpha}, T_{\alpha}^{\ast})\|_{\mathcal{H}_\alpha} &=\sup\left\{\|p(T_{\alpha}, T_{\alpha}^{\ast})(x)\|_{\mathcal{H}_\alpha} \colon \|x\|=1,\;x\in \mathcal{H}_\alpha\right\} \\
    &\leq \sup\left\{\|p(T_{\beta}, T_{\beta}^{\ast})(x)\|_{\mathcal{H}_\beta} \colon \|x\|=1,\;x\in \mathcal{H}_\beta\right\} \\
    &=\| p(T_{\beta}, T_{\beta}^{\ast})\|_{\mathcal{H}_\beta}.
\end{align*}
In particular, $\alpha \leq \beta \leq \gamma,$ we have
\begin{equation*}
    \phi_{\alpha, \beta} \circ \phi_{\beta, \gamma}\big(p(T_\gamma,T^{\ast}_\gamma)\big)=\phi_{\alpha, \beta}\big(p(T_\beta,T^{\ast}_\beta)\big)=p(T_\alpha,T^{\ast}_\alpha)=\phi_{\alpha, \gamma}\big(p(T_\gamma,T^{\ast}_\gamma)\big),
\end{equation*}
for all $p(T_\gamma,T^{\ast}_\gamma) \in C^{\ast}\{I_{\mathcal{H}_{\gamma}}, T_{\gamma},  T_{\gamma}^{\ast}\}.$
As a consequence, $\phi_{\alpha, \beta}$ has a unique continuous extension, again we denoted it by $\phi_{\alpha, \beta}$, from $C^{\ast}\{I_{\mathcal{H}_{\beta}}, T_{\beta}, T_{\beta}^{\ast}\}$ onto $C^{\ast}\{I_{\mathcal{H}_{\alpha}}, T_{\alpha}, T_{\alpha}^{\ast}\}$ such that
\begin{equation*}
    \phi_{\alpha, \beta} \circ \phi_{\beta, \gamma} = \phi_{\alpha, \gamma}, \; \text{whenever}\; \alpha \leq \beta \leq \gamma.
\end{equation*}
This follows that $\left(\big\{C^{\ast}\{I_{\mathcal{H}_{\alpha}}, T_{\alpha},  T_{\alpha}^{\ast}\}\big\}_{\alpha \in \Lambda},\; \big\{\phi_{\alpha, \beta}\big\}_{\alpha \leq \beta}\right)$ is a projective system of commutative unital $C^{\ast}$-algebras.  By the construction of projective limit given in Equation \eqref{Equation: Proj limit},
\begin{equation*}
    \varprojlim\limits_{\alpha \in \Lambda}C^{\ast}\{I_{\mathcal{H}_{\alpha}}, T_{\alpha},  T_{\alpha}^{\ast}\}=\left\{\big\{R_\alpha\big\}_{\alpha \in \Lambda} \in \prod\limits_{\alpha \in \Lambda}C^{\ast}\{I_{\mathcal{H}_{\alpha}}, T_{\alpha},  T_{\alpha}^{\ast}\} \colon \phi_{\alpha,\beta}(R_\beta)=R_\alpha \text{ whenever }\alpha \leq \beta\right\},
\end{equation*}
is a commutative unital locally $C^{\ast}$-algebra.
\end{note}
Now we define the locally $C^{\ast}$-algebra generated by given locally bounded operator.
\begin{defn}
    Let $T \in C^{\ast}_{\mathcal{E}}(\mathcal{D})$ be normal. The unital locally $C^{\ast}$-algebra generated by $T$ is denoted by $\mathcal{A}[T]$ and we define this as,
    \begin{equation}\label{eq: A[T]}
    \mathcal{A}[T]:=\left\{R=\varprojlim\limits_{\alpha \in \Lambda}R_\alpha \colon \big\{R_\gamma\big\}_{\gamma \in \Lambda} \in \varprojlim\limits_{\alpha \in \Lambda}C^{\ast}\{I_{\mathcal{H}_{\alpha}}, T_{\alpha},  T_{\alpha}^{\ast}\}\right\}.
\end{equation}
Here the symbol $\varprojlim\limits_{\alpha \in \Lambda}R_{\alpha}$ denotes a locally bounded normal operator on $\mathcal{D}$ whose restriction to $\mathcal{H}_\alpha$ is $R_\alpha,$ for every $\alpha \in \Lambda$ (see Remark \ref{rmk: locally bounded operator}).
\end{defn}
It is clear that $\mathcal{A}[T]$ is a subalgebra of $C^{\ast}_{\mathcal{E}}(\mathcal{D}).$ Now we show that $\mathcal{A}[T]$ is a commutative unital locally $C^{\ast}$-algebra. Firstly note that, the map $\phi_\alpha \colon \mathcal{A}[T] \rightarrow C^{\ast}\{I_{\mathcal{H}_{\alpha}}, T_{\alpha},  T_{\alpha}^{\ast}\}$ defined by 
\begin{equation*}
    \phi_\alpha(R)=R\big|_{\mathcal{H}_\alpha}, \text{ for all }R=\varprojlim\limits_{\alpha \in \Lambda}R_\alpha \in \mathcal{A}[T]
\end{equation*}
is a $\ast$-homomorphism. 
Further, $\mathcal{A}[T]$ together with the family $\{\phi_\alpha\}_{\alpha \in \Lambda}$ of $\ast$-homomorphisms is compatible with the projective system $\left(\big\{C^{\ast}\{I_{\mathcal{H}_{\alpha}}, T_{\alpha},  T_{\alpha}^{\ast}\}\big\}_{\alpha \in \Lambda},\; \big\{\phi_{\alpha, \beta}\big\}_{\alpha \leq \beta}\right)$ of $C^{\ast}$-algebras since
\begin{equation*}
    \phi_{\alpha,\beta} \circ \phi_\beta(R)=\phi_{\alpha,\beta}\big(R\big|_{\mathcal{H}_\beta}\big)=R\big|_{\mathcal{H}_\alpha}=\phi_{\alpha}(R),
\end{equation*}
for all $R \in \mathcal{A}[T],$ whenever $\alpha \leq \beta.$ It follows that $\mathcal{A}[T]$ is also the projective limit of the projective system $\left(\big\{C^{\ast}\{I_{\mathcal{H}_{\alpha}}, T_{\alpha},  T_{\alpha}^{\ast}\}\big\}_{\alpha \in \Lambda},\; \big\{\phi_{\alpha, \beta}\big\}_{\alpha \leq \beta}\right).$  Thus by Remark \ref{rmk: Invlimit}, $\mathcal{A}[T]$ is a commutative unital locally $C^{\ast}$-algebra.
In order to develop functional calculus for a locally bounded normal operator, we shall understand the relation between the character space $\mathcal{M}_{\mathcal{A}[T]}$ and the local spectrum $\sigma_{loc}(T).$ We give the result below. 
\begin{thm}\label{thm: homeomorphic to sigma_loc}
    Let $T \in C^{\ast}_\mathcal{E}(\mathcal{D})$ be normal. Then the character space $\mathcal{M}_{\mathcal{A}[T]}$ is homeomorphic to $\sigma_{loc}(T).$
\end{thm}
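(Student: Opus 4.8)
The plan is to leverage the two structural facts already available. On the algebra side, $\mathcal{A}[T] = \varprojlim_{\alpha} \mathcal{A}[T]_\alpha$ with $\mathcal{A}[T]_\alpha = C^{\ast}\{I_{\mathcal{H}_\alpha}, T_\alpha, T_\alpha^{\ast}\}$, so by Equation \eqref{Eq: M_A inductive limit} the character space is the inductive limit $\mathcal{M}_{\mathcal{A}[T]} = \varinjlim_{\alpha}\mathcal{M}_{\mathcal{A}[T]_\alpha} = \bigcup_{\alpha}\mathcal{Z}_\alpha$. On the spectral side, Theorem \ref{Theorem: local spectrum} gives $\sigma_{loc}(T) = \bigcup_{\alpha}\sigma(T_\alpha)$. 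The strategy is to realize $\sigma_{loc}(T)$ as a strict inductive limit matching the system $\{\mathcal{M}_{\mathcal{A}[T]_\alpha}\}_{\alpha}$ level by level, and then invoke the uniqueness of inductive limits. The induced map will be simply $\Theta(\Phi) = \Phi(T)$.

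First I would invoke the classical spectral theorem for a single bounded normal operator: since $\mathcal{A}[T]_\alpha$ is the commutative unital $C^{\ast}$-algebra generated by the bounded normal operator $T_\alpha$, the evaluation map $\Theta_\alpha \colon \mathcal{M}_{\mathcal{A}[T]_\alpha} \to \sigma(T_\alpha)$, $\Theta_\alpha(\varphi_\alpha) = \varphi_\alpha(T_\alpha)$, is a homeomorphism (\cite{kehezhu}); spectral permanence guarantees $\varphi_\alpha(T_\alpha) \in \sigma(T_\alpha)$. Next I would check that $\{\sigma(T_\alpha)\}_{\alpha}$ is a strict inductive system: whenever $\alpha \leq \beta$, the subspace $\mathcal{H}_\alpha$ reduces $T_\beta$ with $T_\beta\big|_{\mathcal{H}_\alpha} = T_\alpha$, so $T_\beta = T_\alpha \oplus T_\alpha'$ on $\mathcal{H}_\beta = \mathcal{H}_\alpha \oplus (\mathcal{H}_\beta \ominus \mathcal{H}_\alpha)$, whence $\sigma(T_\alpha) \subseteq \sigma(T_\beta)$; the connecting maps are the inclusions $\iota_{\beta,\alpha}$, and by Theorem \ref{Theorem: local spectrum} the union is $\sigma_{loc}(T)$, which I endow with the corresponding inductive limit topology.

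The crucial step is to verify that $\{\Theta_\alpha\}$ is coherent in the sense of Definition \ref{defn: coherent inj}. Using that the projective connecting map satisfies $\phi_{\alpha,\beta}(T_\beta)=T_\alpha$, for $\varphi_\alpha \in \mathcal{M}_{\mathcal{A}[T]_\alpha}$ I compute
\begin{equation*}
    \Theta_\beta\big(\up{\gamma}_{\beta,\alpha}(\varphi_\alpha)\big) = (\varphi_\alpha \circ \phi_{\alpha,\beta})(T_\beta) = \varphi_\alpha\big(\phi_{\alpha,\beta}(T_\beta)\big) = \varphi_\alpha(T_\alpha) = \iota_{\beta,\alpha}\big(\Theta_\alpha(\varphi_\alpha)\big),
\end{equation*}
so the squares $\Theta_\beta \circ \up{\gamma}_{\beta,\alpha} = \iota_{\beta,\alpha} \circ \Theta_\alpha$ commute. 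Since each $\Theta_\alpha$ is a homeomorphism, the families $\{\Theta_\alpha\}$ and $\{\Theta_\alpha^{-1}\}$ are both compatible with the two inductive systems; by the universal property and the uniqueness (up to compatibility) of inductive limits established before Definition \ref{defn: coherent inj}, they induce mutually inverse continuous maps $\Theta \colon \mathcal{M}_{\mathcal{A}[T]} \to \sigma_{loc}(T)$ and its inverse, giving the desired homeomorphism, with $\Theta(\Phi)=\Phi(T)$.

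I would then record the explicit bijectivity as a sanity check: for injectivity, given $\Phi(T)=\Phi'(T)$ I push both characters to a common level $\gamma$ (by directedness) and use injectivity of $\Theta_\gamma$ together with the fact that a member of $\mathcal{M}_{\mathcal{A}[T]}$ is a continuous $\ast$-homomorphism determined by its value on $T$; for surjectivity, any $\lambda \in \sigma(T_\alpha)$ is hit by $\Theta_\alpha$ and lifts to $\varphi_\alpha \circ \phi_\alpha$. I expect the main obstacle to be the topological bookkeeping, specifically the continuity of $\Theta^{-1}$. I would handle this through the inductive-limit universal property rather than a direct neighborhood argument: a map out of $\varinjlim_{\alpha}\sigma(T_\alpha)$ is continuous precisely when each restriction to $\sigma(T_\alpha)$ is, and those restrictions are the homeomorphisms $\Theta_\alpha^{-1}$ composed with the continuous inclusions $\mathcal{Z}_\alpha \hookrightarrow \mathcal{M}_{\mathcal{A}[T]}$ and the identification of $\mathcal{Z}_\alpha$ with $\mathcal{M}_{\mathcal{A}[T]_\alpha}$ from Remark \ref{rmk: continuity and homeomorphism}.
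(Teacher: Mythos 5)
Your overall route is the paper's own: identify each level's character space with $\sigma(T_\alpha)$ via the classical spectral theorem, then glue these identifications along the strict inductive limit structure. Your explicit coherence computation $\Theta_\beta \circ \up{\gamma}_{\beta,\alpha} = \iota_{\beta,\alpha}\circ \Theta_\alpha$ and the universal-property argument for continuity of $\Theta^{-1}$ are in fact a more systematic formalization of what the paper does somewhat informally (it concludes by citing Remark \ref{rmk: continuity and homeomorphism}(2) and ``the above various homeomorphism relations''), and your map $\Theta(\Phi)=\Phi(T)$ is exactly the paper's $\Delta(\varphi_\alpha\circ\pi_\alpha)=(\varphi_\alpha\circ\theta_\alpha^{-1})(T_\alpha)$.

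There is, however, one substantive gap at the very first step. Definition \ref{defn: characterspace} and Equation \eqref{Eq: M_A inductive limit} are stated for the quotients $\mathcal{A}_\alpha=\mathcal{A}/\mathcal{I}_\alpha$ coming from the seminorms, not for the components of an arbitrary projective-system presentation of $\mathcal{A}$. So before you may write $\mathcal{M}_{\mathcal{A}[T]}=\varinjlim_\alpha \mathcal{M}_{\mathcal{A}[T]_\alpha}$ with $\mathcal{A}[T]_\alpha=C^{\ast}\{I_{\mathcal{H}_\alpha},T_\alpha,T_\alpha^{\ast}\}$, you must prove that the quotient $\mathcal{A}[T]/\mathcal{I}_\alpha$, where $\mathcal{I}_\alpha=\{R\in\mathcal{A}[T]\colon R\big|_{\mathcal{H}_\alpha}=0\}$, is isometrically $\ast$-isomorphic to $C^{\ast}\{I_{\mathcal{H}_\alpha},T_\alpha,T_\alpha^{\ast}\}$; this is precisely the map $\theta_\alpha(R+\mathcal{I}_\alpha)=R_\alpha$ to which the paper devotes the first half of its proof. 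Injectivity and isometry are immediate, but surjectivity genuinely needs an argument: a given $R_\alpha$ must be lifted to a compatible family over all of $\Lambda$ (the paper asserts such a lift; alternatively, note that the image of $\theta_\alpha$ is closed, being the isometric image of a complete space, and contains every polynomial $p(T_\alpha,T_\alpha^{\ast})$ because $\{p(T_\beta,T_\beta^{\ast})\}_{\beta\in\Lambda}$ is a compatible family, hence the image is all of $C^{\ast}\{I_{\mathcal{H}_\alpha},T_\alpha,T_\alpha^{\ast}\}$ by density of polynomials). Once $\theta_\alpha$ is in place, the rest of your argument goes through as written.
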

\begin{proof}
     Since $\mathcal{A}[T]$ is the projective limit of the projective system  $\left(\big\{C^{\ast}\{I_{\mathcal{H}_{\alpha}}, T_{\alpha},  T_{\alpha}^{\ast}\}\big\}_{\alpha \in \Lambda},\; \big\{\phi_{\alpha, \beta}\big\}_{\alpha \leq \beta}\right),$ consisting of locally bounded operators $R=\varprojlim\limits_{\alpha \in \Lambda}R_\alpha \in C^{\ast}_{\mathcal{E}}(\mathcal{D})$ satisfying $\phi_{\alpha,\beta}(R_\beta)=R_\alpha$ whenever $\alpha \leq \beta,$
    there is a natural way of defining seminorm $s_\alpha(R)=\|R_\alpha\|_{\mathcal{H}_\alpha},$ for every $\alpha \in \Lambda.$ This gives an upward filtered family $\{s_\alpha\}_{\alpha \in \Lambda}$ of $C^{\ast}$-seminorms that generate complete Hausdorff locally convex topology on $\mathcal{A}[T].$ If we define $\mathcal{I}_\alpha=s^{-1}_\alpha(0),$ then $\mathcal{I}_\alpha$ is a closed two sided $\ast$-ideal in $\mathcal{A}[T]$ and $\mathcal{A}[T]/ \mathcal{I}_{\alpha}$ is a $C^{\ast}$-algebra with the norm induced by $s_{\alpha}$, for every $\alpha \in \Lambda.$ Now for a fixed $\alpha \in \Lambda,$ we define $\theta_\alpha \colon \mathcal{A}[T]/\mathcal{I}_\alpha \longrightarrow C^{\ast}\{I_{\mathcal{H}_{\alpha}},\;T_{\alpha},T^{\ast}_\alpha\}$ by
\begin{equation}\label{eq: theta_alpha}
    \theta_\alpha(R+\mathcal{I}_\alpha)=R_\alpha, \text{ for all }R+\mathcal{I}_\alpha \in \mathcal{A}[T]/\mathcal{I}_\alpha.
\end{equation}
If $R + \mathcal{I}_{\alpha}= R^{\prime} + \mathcal{I}_{\alpha} $ then $R - R^{\prime} \in \mathcal{I}_{\alpha}$ and so $R_{\alpha} = R^{\prime}_{\alpha}.$ This shows that $\theta_{\alpha}$ is a well-defined linear map. Now we show that $\theta_{\alpha}$ is $C^{\ast}$-isomorphism. Let $R+\mathcal{I}_\alpha,\; R^{\prime} + \mathcal{I}_{\alpha}  \in \mathcal{A}[T]/ \mathcal{I}_{\alpha}$. Then 
\begin{equation*}
    \theta_{\alpha}\left( R R^{\prime}+ \mathcal{I}_{\alpha}\right) = R_{\alpha} R_{\alpha}^{\prime} \; \text{and}\; \theta_{\alpha}\left( R^{\ast}+\mathcal{I}_\alpha \right)= \theta_{\alpha}\left( R+\mathcal{I}_\alpha \right)^{\ast}.
\end{equation*}
Moreover, 
\begin{align*}
    \left\| \theta_{\alpha}\left( R+\mathcal{I}_\alpha \right)\right\|_{\mathcal{H}_{\alpha}} =  \|R_{\alpha}\|_{\mathcal{H}_{\alpha}} = s_{\alpha} \left( R \right).
\end{align*}
It follows that $\theta_\alpha$ is injective. Also $\theta_\alpha$ is surjective because given any $R_\alpha \in C^{\ast}\{I_{\mathcal{H}_{\alpha}},\;T_{\alpha},T^{\ast}_\alpha\},$ there exists an $R_\beta \in C^{\ast}\{I_{\mathcal{H}_{\beta}},\;T_{\beta},T^{\ast}_\beta\}$ such that $\phi_{\alpha,\beta}(R_\beta)=R_\alpha,$ whenever $\alpha \leq \beta$ and $\varprojlim\limits_{\beta \in \Lambda}R_\beta+\mathcal{I}_\alpha$ is the desired pre-image of $R_\alpha$ under the map $\theta_\alpha.$ 
Therefore, $\theta_{\alpha}$ is an isometric $\ast$-isomorphism of $C^{\ast}$-algebras. As a consequence, we have that maximal ideal spaces $\mathcal{M}_{\mathcal{A}[T]/ \mathcal{I}_{\alpha}}$ and $\mathcal{M}_{C^{\ast}\{I_{\mathcal{H}_{\alpha}}, T_{\alpha},  T_{\alpha}^{\ast}\}}$ are homeomorphic.  Since $T_{\alpha} \in \mathcal{B}(\mathcal{H}_{\alpha})$ is normal, it follows from the spectral theorem \cite[Theorem 10.2]{kehezhu} that $\mathcal{M}_{C^{\ast}\{I_{\mathcal{H}_{\alpha}}, T_{\alpha},  T_{\alpha}^{\ast}\}}$ equipped with the weak$^{\ast}$-topology is homeomorphic to the spectrum $\sigma(T_{\alpha})$ for each $\alpha \in \Lambda. $ Finally, $\mathcal{M}_{\mathcal{A}[T]/ \mathcal{I}_{\alpha}}$ is homeomorphic to $\sigma(T_{\alpha})$ for each $\alpha \in \Lambda.$ 

On the other hand, from Definition \ref{defn: characterspace}, the character space of the commutative unital locally $C^{\ast}$-algebra generated by the locally bounded normal operator $T$ is given by 
\begin{equation*}
    \mathcal{M}_{\mathcal{A}[T]} = \bigcup\limits_{\alpha \in \Lambda}\Big\{ \varphi_\alpha \circ {\pi}_\alpha \colon \varphi_\alpha \in \mathcal{M}_{\mathcal{A}[T]/\mathcal{I}_\alpha}\Big\}.
\end{equation*}
It is equipped with the inductive limit topology as described in Section \ref{Section 2}. Note that, whenever $\alpha \leq \beta$ we see that $\sigma(T_{\alpha})\subseteq \sigma(T_{\beta})$ and the family $\{\sigma(T_{\alpha})\}_{\alpha \in \Lambda}$ forms a strictly inductive system of compact topological spaces in $\mathbb{C}$. Hence the inductive limit given by
\begin{equation*}
    \sigma_{loc}(T) = \bigcup\limits_{\alpha \in \Lambda} \sigma(T_{\alpha}),
\end{equation*}
is equipped with the inductive limit topology. By using $(2)$ of Remark \ref{rmk: continuity and homeomorphism} and the above various homeomorphism relations, we conclude that there is a homeomorphism between $\mathcal{M}_{\mathcal{A}[T]}$ and $\sigma_{loc}(T).$ In fact, the map $\Delta \colon \mathcal{M}_{\mathcal{A}[T]} \to \sigma_{loc}(T)$ defined by 
\begin{equation*}
    \Delta \left( \varphi_{\alpha} \circ \pi_{\alpha} \right) = \left(\varphi_{\alpha} \circ \theta_{\alpha}^{-1}\right)(T_{\alpha}),
\end{equation*}
for any $\varphi_{\alpha} \in \mathcal{M}_{\mathcal{A}[T]/ \mathcal{I}_{\alpha}},\; \alpha \in \Lambda$ is a homeomorphism.
\end{proof}
 Next we turn our discussion towards the functional calculus of locally bounded normal operator $T$. Let us recall the Gelfand representation in this context. It follows from \cite[Theorem 10.2]{kehezhu} that, for each $\alpha \in \Lambda,$ the Gelfand representation $\Gamma_\alpha \colon C^{\ast}\{I_{\mathcal{H}_{\alpha}},\;T_{\alpha},T^{\ast}_\alpha\} \rightarrow C\big(\sigma(T_\alpha)\big)$ is defined such that 
\begin{equation*}
    \Gamma_\alpha\big(\mathcal{P}(T_\alpha,T^{\ast}_\alpha)\big)(\lambda)=\mathcal{P}(\lambda,\bar{\lambda}),\text{ for every }\mathcal{P}(T_\alpha,T^{\ast}_\alpha) \in C^{\ast}\{I_{\mathcal{H}_{\alpha}},\;T_{\alpha},T^{\ast}_\alpha\},\; \lambda \in \sigma(T_\alpha).
\end{equation*}
Since the class of polynomials $p(T_\alpha,T^{\ast}_{\alpha})$ is dense in $C^{\ast}\{I_{\mathcal{H}_{\alpha}},\;T_{\alpha},T^{\ast}_\alpha\},$ then $\Gamma_{\alpha}$ has a unique continuous extension and it is again denoted by $\Gamma_\alpha$.
Note that each $\sigma(T_{\alpha})$ is a compact Hausdorff space and $\sigma(T_{\alpha}) \subseteq \sigma(T_{\beta})$ since $T_{\beta}\big|_{\mathcal{H}_{\alpha}} = T_{\alpha}$ whenever $\alpha \leq \beta.$ As a result, $\sigma_{loc}(T) = \bigcup\limits_{\alpha}{\sigma(T_{\alpha})}$ can be seen as an (strictly) inductive limit of the strictly inductive family $\{\sigma(T_{\alpha})\}_{\alpha \in \Lambda}$ of toplogical space. In fact, by Proposition \ref{prop: topological properties of character sp} we see that $\sigma_{loc}(T)$ is completely regular. Similar to the result proved in Theorem \ref{Thm: C(M_A) is local C* alg}, $C\big(\sigma_{loc}(T)\big)$ is a commutative unital locally $C^{\ast}$-algebra. A continuous function $f \in C\big(\sigma_{loc}(T)\big)$ is uniquely represented by its restrictions $f\big|_{\sigma(T_{\alpha})}$, for $\alpha \in \Lambda.$
\begin{thm}\label{Thm: proj system loc bdd operator}
Let $T \in C^\ast_{\mathcal{E}}(\mathcal{D})$ is normal and $f \in C\big(\sigma_{loc}(T)\big).$ For every $\alpha \in \Lambda$, define  $f_\alpha:=f\big|_{\sigma(T_\alpha)}$ then the family $\Big\{\Gamma^{-1}_{\alpha}(f_\alpha)\Big\}_{\alpha\in \Lambda}$ of bounded operators satisfy that $\mathcal{H}_{\alpha}$ is a reducing subspace of $\Gamma^{-1}_\beta(f_\beta)$ and  
\begin{equation*}
    \Gamma^{-1}_\beta(f_\beta)\big|_{\mathcal{H}_\alpha}=\Gamma^{-1}_\alpha(f_\alpha), \text{ whenever } \alpha \leq \beta.
\end{equation*}

\end{thm}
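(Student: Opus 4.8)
The plan is to recognize $\Gamma_\alpha^{-1}(f_\alpha)$ as the image of $f_\alpha$ under the inverse Gelfand representation of the commutative $C^\ast$-algebra $C^{\ast}\{I_{\mathcal{H}_{\alpha}},T_{\alpha},T^{\ast}_\alpha\}$, that is, as the continuous functional calculus of the bounded normal operator $T_\alpha$ evaluated at $f_\alpha$. The bridge between the indices $\alpha \leq \beta$ is the homomorphism $\phi_{\alpha,\beta}$, which on $\ast$-polynomials is given by $p(T_\beta,T^\ast_\beta)\mapsto p(T_\alpha,T^\ast_\alpha)$ and hence, by continuity, coincides on all of $C^{\ast}\{I_{\mathcal{H}_{\beta}},T_{\beta},T^{\ast}_\beta\}$ with the restriction map $R_\beta\mapsto R_\beta\big|_{\mathcal{H}_\alpha}$ (using $T_\beta\big|_{\mathcal{H}_\alpha}=T_\alpha$). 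Thus the identity to be proved, $\Gamma^{-1}_\beta(f_\beta)\big|_{\mathcal{H}_\alpha}=\Gamma^{-1}_\alpha(f_\alpha)$, is equivalent to $\phi_{\alpha,\beta}\big(\Gamma^{-1}_\beta(f_\beta)\big)=\Gamma^{-1}_\alpha(f_\alpha)$, and the whole argument reduces to a density-and-continuity computation on $\ast$-polynomials.

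First I would settle the reducing-subspace claim. Since $T\in C^\ast_{\mathcal{E}}(\mathcal{D})$, each $\mathcal{H}_\alpha$ is reducing under $T$, so both $T_\beta=T\big|_{\mathcal{H}_\beta}$ and $T^\ast_\beta$ leave $\mathcal{H}_\alpha$ invariant whenever $\alpha\leq\beta$. Consequently every $\ast$-polynomial $p(T_\beta,T^\ast_\beta)$ maps $\mathcal{H}_\alpha$ into itself, as does its adjoint (again a $\ast$-polynomial), so $\mathcal{H}_\alpha$ is reducing for each such polynomial. Because the collection of operators that reduce a fixed closed subspace is norm-closed (the projection onto $\mathcal{H}_\alpha$ must commute with the operator, and the commutator depends continuously on the operator), and the $\ast$-polynomials are dense in $C^{\ast}\{I_{\mathcal{H}_{\beta}},T_{\beta},T^{\ast}_\beta\}$, every element of this $C^\ast$-algebra reduces $\mathcal{H}_\alpha$. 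In particular $\Gamma^{-1}_\beta(f_\beta)$ does, which is precisely the first assertion.

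Next, for the restriction identity, I would invoke the Stone--Weierstrass theorem on the compact set $\sigma(T_\beta)$: the functions $\lambda\mapsto p(\lambda,\bar\lambda)$ are dense in $C\big(\sigma(T_\beta)\big)$, so there is a sequence of $\ast$-polynomials with $p_n(\cdot,\bar\cdot)\to f_\beta$ uniformly on $\sigma(T_\beta)$. Applying the isometry $\Gamma_\beta^{-1}$ gives $p_n(T_\beta,T^\ast_\beta)\to\Gamma_\beta^{-1}(f_\beta)$ in operator norm, and applying the contractive $C^\ast$-homomorphism $\phi_{\alpha,\beta}$ yields $p_n(T_\alpha,T^\ast_\alpha)\to\phi_{\alpha,\beta}\big(\Gamma_\beta^{-1}(f_\beta)\big)$. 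On the other hand, by Theorem~\ref{Theorem: local spectrum} we have $\sigma(T_\alpha)\subseteq\sigma(T_\beta)$, so uniform convergence on $\sigma(T_\beta)$ forces $p_n(\cdot,\bar\cdot)\to f_\beta\big|_{\sigma(T_\alpha)}=f_\alpha$ uniformly on $\sigma(T_\alpha)$; applying the isometry $\Gamma_\alpha^{-1}$ gives $p_n(T_\alpha,T^\ast_\alpha)\to\Gamma_\alpha^{-1}(f_\alpha)$. Uniqueness of the operator-norm limit then delivers $\phi_{\alpha,\beta}\big(\Gamma_\beta^{-1}(f_\beta)\big)=\Gamma_\alpha^{-1}(f_\alpha)$, which is the desired restriction identity.

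The heart of the argument, and the step I expect to require the most care, is the \emph{simultaneity} of the approximation: I must choose a single sequence of $\ast$-polynomials that approximates $f_\beta$ on the larger spectrum $\sigma(T_\beta)$ and then exploit that its restrictions automatically approximate $f_\alpha=f\big|_{\sigma(T_\alpha)}$ on the smaller spectrum, using the nesting $\sigma(T_\alpha)\subseteq\sigma(T_\beta)$ together with the fact that $f_\beta$ and $f_\alpha$ are both restrictions of the same global $f\in C\big(\sigma_{loc}(T)\big)$. Everything else is the routine interplay of the isometric Gelfand maps $\Gamma_\alpha,\Gamma_\beta$ with the contractive restriction homomorphism $\phi_{\alpha,\beta}$.
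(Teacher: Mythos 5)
Your proposal is correct and follows essentially the same route as the paper's proof: both arguments hinge on choosing a single sequence of $\ast$-polynomials converging uniformly to $f_\beta$ on $\sigma(T_\beta)$, observing that the nesting $\sigma(T_\alpha)\subseteq\sigma(T_\beta)$ makes the same sequence automatically approximate $f_\alpha$ on $\sigma(T_\alpha)$, and passing to the limit using $T_\beta\big|_{\mathcal{H}_\alpha}=T_\alpha$ together with the commutation of $T_\beta$ with the projection onto $\mathcal{H}_\alpha$. The only differences are cosmetic: the paper verifies the reducing property and the restriction identity by vector-level limit computations, whereas you work at the operator-norm level, using norm-closedness of the commutant of the projection and identifying the restriction map with the contractive homomorphism $\phi_{\alpha,\beta}$.
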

\begin{proof}
Since $T \in C^\ast_{\mathcal{E}}(\mathcal{D})$ is normal, for each $\alpha \in \Lambda$, the operator $T_\alpha=T\big|_{\mathcal{H}_\alpha} \in \mathcal{B}(\mathcal{H}_\alpha)$ is normal. Therefore, for every $f  \in C(\sigma_{loc}(T)),$ the mapping $f_\alpha \mapsto \Gamma^{-1}_{\alpha}(f_\alpha)=f_\alpha(T_\alpha)$ from $C\big(\sigma(T_\alpha)\big)$ onto $C^{\ast}\{I_{\mathcal{H}_{\alpha}},\;T_{\alpha},T^{\ast}_\alpha\}$ is the continuous functional calculus for the normal operator $T_\alpha.$ 

Suppose $\alpha \leq \beta$ and Since $\sigma(T_{\beta})$ is compact, for $f_{\beta} \in C\left( \sigma(T_{\beta})\right)$ there is a sequence $\{\mathcal{P}_{n}\}_{n\in \mathbb{N}}$ of polynomials converges uniformly to $f_{\beta}$ on $\sigma(T_{\beta})$. Moreover, $\{\mathcal{P}_{n}\}_{n\in \mathbb{N}}$ converges uniformly to $f_{\alpha}$ on $\sigma(T_{\alpha})$. If $Q_{\alpha, \beta}\colon \mathcal{H}_{\beta} \to \mathcal{H}_{\beta}$ is an orthogonal projection onto $\mathcal{H}_{\alpha}$, then by using the fact that $Q_{\alpha, \beta}T_{\beta} = T_{\beta} Q_{\alpha, \beta}$, for any $x \in \mathcal{H}_{\beta}$, we have 
\begin{align*}
    Q_{\alpha, \beta} \Gamma_{\beta}^{-1}\big( f_{\beta}\big) x = Q_{\alpha, \beta} \Big( \lim\limits_{n \to \infty} \Gamma_{\beta}^{-1}\left( \mathcal{P}_{n}\right)\Big) x &= Q_{\alpha, \beta} \Big( \lim\limits_{n \to \infty}  \mathcal{P}_{n}\left( T_{\beta}, T_{\beta}^{\ast}\right)\Big) x \\
    &= \lim\limits_{n \to \infty} \Big( Q_{\alpha, \beta}  \mathcal{P}_{n}\left( T_{\beta}, T_{\beta}^{\ast}\right)  \Big)x\\
    &=\lim\limits_{n \to \infty}    \mathcal{P}_{n}\left( T_{\beta}, T_{\beta}^{\ast} \right) Q_{\alpha, \beta} x\\
    &= \Gamma_{\beta}^{-1}\big( f_{\beta}\big) Q_{\alpha, \beta} x.
\end{align*}
This shows that $\mathcal{H}_{\alpha}$ is a reducing subspace of $\Gamma_{\beta}^{-1}\big( f_{\beta}\big).$ Since $T_{\beta}\big|_{\mathcal{H}_{\alpha}} = T_{\alpha}$,  for every $x \in \mathcal{H}_{\alpha}$  we get 
\begin{align*}
    \Gamma_{\beta}^{-1}\big( f_{\beta}\big) x = \lim\limits_{n \to \infty} \Gamma_{\beta}^{-1}\left( \mathcal{P}_{n}\right)x =  \lim\limits_{n \to \infty} \mathcal{P}_{n}\left( T_{\beta}, T_{\beta}^{\ast}\right) x &= \lim\limits_{n \to \infty} \mathcal{P}_{n}\left( T_{\alpha}, T_{\alpha}^{\ast}\right) x\\
    &= \lim\limits_{n \to \infty} \Gamma_{\alpha}^{-1}\left( \mathcal{P}_{n}\right)x \\
    &=  \Gamma_{\alpha}^{-1}\big( f_{\alpha}\big) x.
\end{align*}
Therefore, $\Gamma_{\beta}^{-1}\big( f_{\beta}\big)\big|_{\mathcal{H}_{\alpha}} = \Gamma_{\alpha}^{-1}\big( f_{\alpha}\big),$ whenever $\alpha \leq \beta.$ Consequently, we see that the family $\left\{\Gamma^{-1}_{\alpha}(f_\alpha)\right\}_{\alpha \in \Lambda}$ is in $\varprojlim\limits_{\alpha \in \Lambda}C^{\ast}\{I_{\mathcal{H}_{\alpha}}, T_{\alpha},  T_{\alpha}^{\ast}\}.$
\end{proof}
 
\begin{rmk}
    In view of Theorem \ref{Thm: proj system loc bdd operator} and Equation \eqref{eq: Gelfand type representation}, for a locally bounded normal operator $T \in C^{\ast}_{\mathcal{E}}(\mathcal{D}),$
the Gelfand type representation $\Gamma \colon \mathcal{A}[T] \rightarrow C\big(\sigma_{loc}(T)\big)$ is given by
\begin{equation*}
    \Gamma(R)\big|_{\sigma(T_\alpha)}=\Gamma_\alpha(R_\alpha), \text{ for all }R=\varprojlim\limits_{\beta \in \Lambda}R_\beta \in \mathcal{A}[T].
\end{equation*}
Clearly, the map $\Gamma$ is well-defined since if $R=R^{\prime},$ then $\Gamma_{\alpha}(R_\alpha)=\Gamma_{\alpha}(R_{\alpha}^{\prime}),$ for all $\alpha \in \Lambda.$
\end{rmk}
\begin{defn}\label{defn: cont func calculus}
    For a given $f \in C(\sigma_{loc}(T))$, by using Theorem \ref{Thm: proj system loc bdd operator} we define the \textit{continuous functional calculus} of $f$ at $T$ as the locally bounded operator $f(T)$ given by is the projective limit of the family $\left\{\Gamma^{-1}_{\alpha}(f_\alpha)\right\}_{\alpha \in \Lambda}$. That is,  
\begin{equation}\label{eq: functional calculus}
    f(T):=\displaystyle\varprojlim_{\alpha \in \Lambda}\Gamma^{-1}_{\alpha}(f_\alpha).
\end{equation} 
\end{defn}
\begin{thm}\label{Thm: cont functional calculus properties}
Let $T \in C^{\ast}_{\mathcal{E}}(\mathcal{D})$ be a  normal operator. Then the map $\Phi \colon C\big(\sigma_{loc}(T)\big) \rightarrow \mathcal{A}[T]$ defined by 
\begin{equation*}
    \Phi(f) = f(T) = \displaystyle\varprojlim_{\alpha \in \Lambda}\Gamma^{-1}_{\alpha}(f_\alpha),
\end{equation*}
is a coherent local contractive (isometric) $\ast$-homomorphism. Moreover, 
\begin{enumerate}
    \item If $\mathcal{P}(z,\bar{z})$ is a polynomial of two variables, then $\mathcal{P}(T) = \varprojlim\limits_{\alpha \in \Lambda}\mathcal{P}(T_\alpha,T^{\ast}_{\alpha}).$ In particular, if $f(z) = z$ is the identity function on $C\big(\sigma_{loc}(T))\big)$, then $f(T) = T.$
    \item $T$ is self adjoint if and only if $\sigma_{loc}(T) \subseteq \mathbb{R}.$
    \item $T$ is unitary if and only if $\sigma_{loc}(T) \subseteq \partial\mathbb{D},$ where $\mathbb{D}=\{z \in \mathbb{C} \colon |z|<1\}.$
\end{enumerate}
    
\end{thm}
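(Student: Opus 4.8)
The plan is to transport every property of $\Phi$ from the corresponding property of the bounded functional calculus $\Gamma_\alpha^{-1}$ at each level $\alpha$, using that both $\mathcal{A}[T]$ and $C\big(\sigma_{loc}(T)\big)$ are realized as projective limits. Write $\mathcal{A}[T] = \varprojlim_{\alpha}C^{\ast}\{I_{\mathcal{H}_\alpha}, T_\alpha, T_\alpha^{\ast}\}$ with coordinate projections $\phi_\alpha(R)=R|_{\mathcal{H}_\alpha}=R_\alpha$ and bonding maps $\phi_{\alpha,\beta}$, and (by the argument of Theorem \ref{Thm: C(M_A) is local C* alg} applied to the strict inductive system $\{\sigma(T_\alpha)\}_{\alpha\in\Lambda}$) write $C\big(\sigma_{loc}(T)\big)=\varprojlim_{\alpha}C\big(\sigma(T_\alpha)\big)$ with projections $\xi_\alpha(f)=f|_{\sigma(T_\alpha)}=f_\alpha$ and restriction bonding maps $\xi_{\alpha,\beta}(f_\beta)=f_\beta|_{\sigma(T_\alpha)}$. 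The candidate coherence family is $\Phi_\alpha:=\Gamma_\alpha^{-1}\colon C\big(\sigma(T_\alpha)\big)\to C^{\ast}\{I_{\mathcal{H}_\alpha}, T_\alpha, T_\alpha^{\ast}\}$. First I would establish coherence: by Definition \ref{defn: cont func calculus}, $\Phi(f)=f(T)$ is the element of the projective limit whose $\alpha$-th coordinate is $\Gamma_\alpha^{-1}(f_\alpha)$ — that this is a legitimate compatible net is exactly Theorem \ref{Thm: proj system loc bdd operator}. Hence $\phi_\alpha(\Phi(f))=\Gamma_\alpha^{-1}(f_\alpha)=\Phi_\alpha(\xi_\alpha(f))$ for every $\alpha$, i.e. $\phi_\alpha\circ\Phi=\Phi_\alpha\circ\xi_\alpha$, which is precisely the coherence diagram of Definition \ref{defn: coherent proj}.

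Next I would read off the algebraic and metric properties. Each $\Gamma_\alpha$ is an isometric unital $\ast$-isomorphism (Gelfand representation of the commutative $C^{\ast}$-algebra $C^{\ast}\{I_{\mathcal{H}_\alpha}, T_\alpha, T_\alpha^{\ast}\}$), so each $\Phi_\alpha=\Gamma_\alpha^{-1}$ is as well. The coordinate projections $\{\phi_\alpha\}_{\alpha\in\Lambda}$ of the projective limit are jointly separating, since by construction \eqref{Equation: Proj limit C*} an element of $\mathcal{A}[T]$ is nothing but the compatible net of its coordinates; therefore any identity in $\mathcal{A}[T]$ that holds after applying every $\phi_\alpha$ holds in $\mathcal{A}[T]$. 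Applying $\phi_\alpha$ to $\Phi(fg)$, $\Phi(f+g)$, $\Phi(\lambda f)$, $\Phi(\overline{f})$ and $\Phi(1)$, and using $(fg)_\alpha=f_\alpha g_\alpha$, $\overline{f}_\alpha=\overline{f_\alpha}$, together with the multiplicativity and $\ast$-preservation of $\Phi_\alpha$, shows $\Phi$ is a unital $\ast$-homomorphism. For the local isometry, the defining seminorms are $s_\alpha(R)=\|R_\alpha\|_{\mathcal{H}_\alpha}$ on $\mathcal{A}[T]$ and $q_\alpha(f)=\|f_\alpha\|_\infty$ on $C\big(\sigma_{loc}(T)\big)$, and $s_\alpha(\Phi(f))=\|\Gamma_\alpha^{-1}(f_\alpha)\|_{\mathcal{H}_\alpha}=\|f_\alpha\|_\infty=q_\alpha(f)$ because $\Gamma_\alpha^{-1}$ is isometric; hence $\Phi$ is local contractive, in fact local isometric.

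For the three structural consequences I would argue level by level. In (1), the function $\mathcal{P}(z,\bar z)$ restricts on $\sigma(T_\alpha)$ to $\lambda\mapsto\mathcal{P}(\lambda,\bar\lambda)$, which the Gelfand formula stated just before Theorem \ref{Thm: proj system loc bdd operator} identifies as $\Gamma_\alpha\big(\mathcal{P}(T_\alpha,T_\alpha^{\ast})\big)$; thus $\Gamma_\alpha^{-1}$ of this restriction is $\mathcal{P}(T_\alpha,T_\alpha^{\ast})$, giving $\mathcal{P}(T)=\varprojlim_\alpha\mathcal{P}(T_\alpha,T_\alpha^{\ast})$, with the special case $f(z)=z$ yielding $f(T)=\varprojlim_\alpha T_\alpha=T$. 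For (2) and (3), note that $T=T^{\ast}$ (resp. $T^{\ast}T=TT^{\ast}=I_{\mathcal{D}}$) holds if and only if the identity holds after restriction to each $\mathcal{H}_\alpha$, i.e. if and only if every $T_\alpha$ is self-adjoint (resp. unitary); by the spectral theorem for bounded normal operators (cf. \cite[Theorem 10.2]{kehezhu}), $T_\alpha$ is self-adjoint iff $\sigma(T_\alpha)\subseteq\mathbb{R}$ and unitary iff $\sigma(T_\alpha)\subseteq\partial\mathbb{D}$. Combining with $\sigma_{loc}(T)=\bigcup_\alpha\sigma(T_\alpha)$ (Theorem \ref{Theorem: local spectrum}) and the fact that a union is contained in $\mathbb{R}$ (resp. $\partial\mathbb{D}$) iff each member is, I obtain the stated equivalences.

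I expect no genuine obstacle here: the entire content is carried by the level-$\alpha$ Gelfand calculus and its compatibility across levels, already packaged in Theorem \ref{Thm: proj system loc bdd operator}. The only point that deserves explicit care is the lifting step — one must confirm that the coordinate projections $\phi_\alpha$ separate the points of $\mathcal{A}[T]$, so that coordinatewise equalities promote to genuine identities and coordinatewise operator properties (self-adjointness, unitarity) promote to the corresponding properties of $T$ on $\mathcal{D}$; this is immediate from the description of the projective limit in \eqref{Equation: Proj limit C*}.
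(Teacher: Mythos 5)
Your proposal is correct and follows essentially the same route as the paper: both realize $C\big(\sigma_{loc}(T)\big)$ and $\mathcal{A}[T]$ as projective limits, obtain coherence from the compatibility of the nets $\{\Gamma_\alpha^{-1}(f_\alpha)\}_{\alpha\in\Lambda}$ established in Theorem \ref{Thm: proj system loc bdd operator}, derive the local isometry from the isometry of each $\Gamma_\alpha$, and prove (1)--(3) levelwise via the bounded spectral theorem together with $\sigma_{loc}(T)=\bigcup_\alpha\sigma(T_\alpha)$. Your explicit remark that the coordinate projections $\phi_\alpha$ separate points of the projective limit is a useful clarification of a step the paper leaves implicit (its verification of the $\ast$-homomorphism property is terser), but it does not constitute a different method.
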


\begin{proof} Since $\sigma_{loc}(T)$ is a (strict) inductive limit of strictly inductive system $\{\sigma(T_{\alpha})\}_{\alpha \in \Lambda}$ of compact sets, one can see that $C\left(\sigma_{loc}(T)\right)$ is the projective limit of projective system $\big\{C(\sigma(T_{\alpha}))\big\}_{\alpha \in \Lambda}$ of commutative unital $C^{\ast}$-algebras.  In fact, $f \mapsto \| f_{\alpha}\|_{\infty}:= \sup\limits\Big\{ |f_{\alpha}(t)|:\; t\in \sigma(T_{\alpha})\Big\}$ defines an upward filtered family of $C^{\ast}$- semi-norm on $C\left(\sigma_{loc}(T)\right)$. For each $\alpha \in \Lambda,$ the map $\zeta_{\alpha} \colon C\left(\sigma_{loc}(T)\right) \to C(\sigma(T_{\alpha}))$ defined by $\zeta_\alpha(f)=f_\alpha=f\big|_{\sigma(T_\alpha)}$ for all $f \in C\big(\sigma_{loc}(T)\big)$ is a $\ast$-homomorphism. 

Let $f,g \in C(\sigma_{loc}(T))$ and $\lambda \in \mathbb{C}$. Then 
\begin{align*}
    \Phi\left( (f+\lambda g)^{\ast}\right) = (f+\lambda g)^{\ast}(T)&=\varprojlim\limits_{\alpha \in \Lambda}\Gamma^{-1}_{\alpha}\big(f^{\ast}_{\alpha}+\bar{\lambda} g_{\alpha}^{\ast}\big) \\
    &=\varprojlim\limits_{\alpha \in \Lambda}\Gamma^{-1}_\alpha(f_\alpha)^{\ast}+\bar{\lambda}\varprojlim\limits_{\alpha \in \Lambda}\Gamma^{-1}_\alpha(g_\alpha)^{\ast} \\
    &=\Phi(f)^{\ast}+\bar{\lambda}\Phi(g)^{\ast}
\end{align*}
Therefore $\Phi$ is a $\ast$-homomorphism.
 As shown in Theorem \ref{Thm: proj system loc bdd operator}, there is a family $\Big\{\Gamma_{\alpha}^{-1}\colon C\big(\sigma(T_\alpha)\big) \rightarrow C^{\ast}\{I_{\mathcal{H}_{\alpha}}, T_{\alpha},  T_{\alpha}^{\ast}\}\Big\}_{\alpha \in \Lambda}$ of $C^{\ast}$-representations satisfying,
\begin{align*}
    \left(\phi_\alpha \circ \Phi\right) (f)=\phi_\alpha\big(f(T)\big)= \phi_\alpha\left(\displaystyle\varprojlim_{\beta \in \Lambda}\Gamma^{-1}_{\beta}(f_\beta)\right) 
    = \Gamma^{-1}_\alpha(f_\alpha) 
    = \Gamma^{-1}_\alpha\big(f\big|_{\sigma(T_\alpha)}\big)=\left(\Gamma^{-1}_\alpha\circ \zeta_\alpha\right)(f),  
\end{align*}
for every $f \in C\big(\sigma_{loc}(T)\big).$ Equivalently, we have the following commuting diagram:
\begin{equation*}
    \begin{tikzcd}[every matrix/.append style={name=m},
    execute at end picture={
    \draw [<-] ([xshift=4mm,yshift=3mm]m-2-2.west) arc[start angle=-90,delta angle=270,radius=0.33cm]; }] 
    C\big(\sigma_{loc}(T)\big)  \arrow[thick,swap] {dd}{\zeta_\alpha} \arrow[thick]{rr}{\Phi} && \mathcal{A}[T]  \arrow[thick]{dd}{\phi_\alpha} \\
     && \\
    C\big(\sigma(T_\alpha)\big) \arrow[thick,swap]{rr}{\Gamma^{-1}_\alpha} && C^{\ast}\{I_{\mathcal{H}_{\alpha}},\;T_{\alpha},T^{\ast}_\alpha\}  
    \end{tikzcd}
\end{equation*}

This shows that $\Phi$ is a coherent representation of locally $C^{\ast}$-algebras. Further, we see that the map $\Phi$ is a local isometric representation in the sense that for each $\alpha$, we have
\begin{align*}
    s_\alpha(\Phi(f))=s_\alpha(f(T))=\|\Gamma^{-1}_{\alpha}(f_\alpha)\|_{\mathcal{H}_\alpha}=\|f_\alpha\|_{\infty}, \text{ for all }f \in C\big(\sigma_{loc}(T)\big).
\end{align*}
Hence $\Phi$ is a coherent local contractive (isometric) $\ast$-homomorphism.

\noindent Proof of $(1):$ If $f(z)=p(z,\bar{z}),$ then $\Gamma^{-1}_\alpha(T_\alpha)=f_\alpha(T_\alpha)=p(T_\alpha,T^{\ast}_{\alpha}).$ Consequently, $f(T)=\varprojlim\limits_{\alpha \in \Lambda}p(T_\alpha,T^{\ast}_{\alpha}).$ In particular, if $f(z)=z,$ then $f(T)=\varprojlim\limits_{\alpha \in \Lambda}T_\alpha=T.$ 

\noindent Proof of $(2)$: Since $T=T^{\ast},$ for each $\alpha,\; T_\alpha$ is a bounded self adjoint operator. So by \cite[Theorem 10.4(a)]{kehezhu} $\sigma(T_\alpha) \subseteq \mathbb{R},$ for all $\alpha \in \Lambda.$ Therefore by Theorem \ref{Theorem: local spectrum}, we have $\sigma_{loc}(T)=\bigcup\limits_{\alpha \in \Lambda}\sigma(T_\alpha) \subseteq \mathbb{R}.$ On the other hand, if $\sigma_{loc}(T) \subseteq \mathbb{R},$ then $T^{\ast}_\alpha=T_\alpha,$ for every $\alpha \in \Lambda$ and hence $T=T^{\ast}.$

\noindent Proof of $(3):$ For every $\alpha \in \Lambda,\;T_\alpha$ is unitary if and only if $\sigma(T_\alpha) \subseteq \partial\mathbb{D}.$ Finally the result follows from Theorem \ref{Theorem: local spectrum}.
\end{proof}
The following example, motivated from \cite[Example 3.1]{dosiev}, gives a description of the notion of continuous functional calculus and illustrates a particular example of Spectral mapping theorem in this set up, which will be proved in Theorem \ref{Thm: loc. spectral mapping thm}.
\begin{eg} \label{eg: functional calculus}
    Let us consider the directed POSET $(\mathbb{N}, \leq)$ and the Hilbert space $\mathcal{H}=L^2(\mathbb{R}).$ For each $n \in \mathbb{N},$ define
    \begin{equation*}
        \mathcal{H}_n := \Big\{h \in L^2(\mathbb{R}):\;  supp(h) \subseteq [-n,n]\Big\},
    \end{equation*}
    which is a closed subspace of $\mathcal{H}$ and hence a Hilbert space. Further, if $m \leq n$ then $\mathcal{H}_{m} \subseteq \mathcal{H}_{n}.$ This implies that $\mathcal{E}:= \big\{ \mathcal{H}_{n}\big\}_{n \in \mathbb{N}}$ is an upward filtered family of Hilbert spaces. Therefore, $\mathcal{D}=\displaystyle\bigcup_{n\in \mathbb{N}}\mathcal{H}_n$ is a locally Hilbert space and $\overline{\mathcal{D}}=\mathcal{H}.$ Now define $T \colon \mathcal{D} \rightarrow \mathcal{D}$ by
    \begin{equation*}
        T(h)(x)=x\, h(x), \text{ for all } h \in \mathcal{D},\, x \in supp(h).
    \end{equation*}
    In particular, for each $n \in \mathbb{N}$, it follows that $T_n=T\big|_{\mathcal{H}_n} \in \mathcal{B}(\mathcal{H}_{n})$ is normal with $\sigma(T_n)=[-n,n]$. As a result, $T \in C^{\ast}_\mathcal{E}(\mathcal{D})$ is normal. The local spectrum of $T$ (see Equation \ref{eq: local spectrum}) is given by   
    \begin{equation*}
        \sigma_{loc}(T)=\bigcup\limits_{n \in \mathbb{N}}\sigma(T_n)=\bigcup\limits_{n \in \mathbb{N}}[-n,n]=\mathbb{R}.
    \end{equation*}
    Let $f \in C(\mathbb{R}).$ For each $n \in \mathbb{N}$, $f_{n} \in C\left( [-n, n]\right)$ and the bounded operator $f_{n}(T_{n})$ is given by the functional calculus of $T_{n}.$  Following Theorem \ref{Thm: proj system loc bdd operator} Equation \eqref{eq: functional calculus} we have  $f(T)=\varprojlim\limits_{n \in \mathbb{N}}f_n(T_{n})$.  For instance, consider $f(x)=e^x,\;x \in \mathbb{R}.$ If $h \in \mathcal{D}$ then $supp(h) \subseteq [-n, n]$ for some $n \in \mathbb{N}$ and 
    \begin{equation*}
        f(T)(h)(x) = e^x h(x) \; \text{for all}\; x \in [-n, n].
    \end{equation*}
    Further, the local spectrum of $f(T)$ is computed as,
    \begin{align*}
        \sigma_{loc}\big(f(T)\big)=\bigcup\limits_{n \in \mathbb{N}}\sigma\big(f_{n}(T_{n})\big)=\bigcup\limits_{n \in \mathbb{N}}\big\{e^{x} :\; x \in [-n,n]\big\}=\big\{e^{x} \; : x \in \mathbb{R}\big\}.
    \end{align*}
    It follows that $ \sigma_{loc}\big(f(T)\big)=f\big(\sigma_{loc}(T)\big).$ 
\end{eg}
In the following theorem, our aim is to generalize the last observation we made in Example \ref{eg: functional calculus} to a locally bounded normal operator $T$. We call this result as {\it the local spectral mapping theorem} in this context as it gives the relation between the local spectrum of $f(T)$ and the range of $f$ on the local spectrum of $T,$ for any $f \in C\big(\sigma_{loc}(T)\big).$ 
For the spectral mapping theorem for unbounded normal operator, we refer the reader \cite[Theorem 6.3]{paliogiannis}. However, our result can be seen as a refiend one to the result given in \cite[Theorem 6.3]{paliogiannis} in the special case of locally bounded operators (not necessarily a closed operator). 
\begin{thm} \label{Thm: loc. spectral mapping thm}(The local spectral mapping theorem)
Let $T \in C^{\ast}_{\mathcal{E}}(\mathcal{D})$ and $f \in C(\sigma_{loc}(T)).$ Then 
\begin{equation*}
        \sigma_{loc}(f(T))=f(\sigma_{loc}(T))=\big\{f(\lambda)\colon \lambda \in \sigma_{loc}(T)\big\}.
\end{equation*}
\end{thm}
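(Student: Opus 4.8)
The plan is to reduce the identity to the level of each bounded normal operator $T_\alpha$ via the Gelfand isomorphism $\Gamma_\alpha$, and then reassemble the pieces using the decomposition of the local spectrum established in Theorem \ref{Theorem: local spectrum}.

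First I would record the restriction identity for the functional calculus. By Definition \ref{defn: cont func calculus} and Theorem \ref{Thm: proj system loc bdd operator}, the locally bounded operator $f(T) = \varprojlim\limits_{\alpha \in \Lambda} \Gamma_\alpha^{-1}(f_\alpha)$ belongs to $C^{\ast}_{\mathcal{E}}(\mathcal{D})$ and satisfies $f(T)\big|_{\mathcal{H}_\alpha} = \Gamma_\alpha^{-1}(f_\alpha) = f_\alpha(T_\alpha)$ for every $\alpha \in \Lambda$, where $f_\alpha = f\big|_{\sigma(T_\alpha)}$. Since $f(T)$ is again a locally bounded normal operator, Theorem \ref{Theorem: local spectrum} applies to it and yields $\sigma_{loc}(f(T)) = \bigcup\limits_{\alpha \in \Lambda} \sigma\big(f_\alpha(T_\alpha)\big)$.

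Next I would handle each level separately. For a fixed $\alpha$, the Gelfand representation $\Gamma_\alpha$ is an isometric $\ast$-isomorphism of $C^{\ast}\{I_{\mathcal{H}_{\alpha}}, T_{\alpha}, T_{\alpha}^{\ast}\}$ onto $C(\sigma(T_\alpha))$ sending $f_\alpha(T_\alpha)$ to $f_\alpha$. As $\ast$-isomorphisms preserve spectra and, by spectral permanence for unital $C^{\ast}$-subalgebras, the spectrum of $f_\alpha(T_\alpha)$ inside this subalgebra coincides with its spectrum in $\mathcal{B}(\mathcal{H}_\alpha)$, I obtain $\sigma\big(f_\alpha(T_\alpha)\big) = \sigma_{C(\sigma(T_\alpha))}(f_\alpha) = f_\alpha\big(\sigma(T_\alpha)\big)$, the range of $f_\alpha$ on the compact set $\sigma(T_\alpha)$. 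Because $f_\alpha$ is merely the restriction of $f$ to $\sigma(T_\alpha) \subseteq \sigma_{loc}(T)$, this equals $f\big(\sigma(T_\alpha)\big)$.

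Finally I would combine these identities with the elementary fact that the image of a union is the union of images:
\begin{equation*}
\sigma_{loc}(f(T)) = \bigcup_{\alpha \in \Lambda} \sigma\big(f_\alpha(T_\alpha)\big) = \bigcup_{\alpha \in \Lambda} f\big(\sigma(T_\alpha)\big) = f\Big(\bigcup_{\alpha \in \Lambda} \sigma(T_\alpha)\Big) = f\big(\sigma_{loc}(T)\big),
\end{equation*}
where the last step again invokes Theorem \ref{Theorem: local spectrum}. The argument is essentially bookkeeping: the only genuine analytic input is the classical spectral mapping phenomenon for a single bounded normal operator, packaged here through $\Gamma_\alpha$. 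The main point requiring care is that both sides are directed unions over the same index set $\Lambda$, so no compactness or closedness of $\sigma_{loc}(T)$ is needed---the assertion concerns the underlying point sets only, and it is precisely the restriction identity $f(T)\big|_{\mathcal{H}_\alpha} = f_\alpha(T_\alpha)$ that licenses passing the local spectrum through the union.
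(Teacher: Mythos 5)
Your proposal is correct and follows essentially the same route as the paper: both decompose $\sigma_{loc}(f(T))=\bigcup_{\alpha\in\Lambda}\sigma\big(f_\alpha(T_\alpha)\big)$ via Theorem \ref{Theorem: local spectrum}, apply the level-wise spectral mapping identity $\sigma\big(f_\alpha(T_\alpha)\big)=f_\alpha\big(\sigma(T_\alpha)\big)$, and finish by commuting $f$ with the directed union. The only cosmetic difference is that the paper cites the classical spectral mapping theorem for bounded normal operators as a black box, whereas you re-derive it at each level from the Gelfand isomorphism $\Gamma_\alpha$ together with spectral permanence.
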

\begin{proof}
   If we denote $T_\alpha=T\big|_{\mathcal{H}_\alpha}$ then $T_{\alpha} \in \mathcal{B}(\mathcal{H}_{\alpha})$ is normal and  $\sigma(T_{\alpha})$ is compact for each $\alpha \in \Lambda$. Since $f_{\alpha} \in C(\sigma(T_{\alpha}))$, by the spectral mapping theorem \cite[Theorem 10.3(c)]{kehezhu} of bounded normal operators, we have 
   \begin{equation*}
       \sigma\left(f_{\alpha}(T_{\alpha})\right) = f_{\alpha}\left(\sigma(T_{\alpha})\right),
   \end{equation*}
   for every $\alpha \in \Lambda.$ Following Equation \eqref{eq: functional calculus} and Theorem \ref{Theorem: local spectrum}, we get
    \begin{align*}
    \sigma_{loc}(f(T))=\bigcup\limits_{\alpha\in \Lambda}\sigma\big(f_\alpha(T_\alpha)\big)= \bigcup\limits_{\alpha\in \Lambda} f_{\alpha} \left( \sigma(T_{\alpha})\right)  &= \bigcup\limits_{\alpha\in \Lambda} f \left( \sigma(T_{\alpha})\right) \\
    &= f \big( \bigcup\limits_{\alpha\in \Lambda}   \sigma(T_{\alpha})\big)\\
    &= f\left( \sigma_{loc}(T)\right)\\
    &= \Big\{ f(\lambda) \colon \lambda \in \sigma_{loc}(T)\Big\}.
\end{align*}
Therefore, $f\big(\sigma_{loc}(T)\big)= \sigma_{loc}\big(f(T)\big).$
\end{proof}
\subsection*{Declaration} 
The authors declare that there are no conflicts of interest.
\section*{Acknowledgement}
The first named author would like to thank SERB (India) for a financial support in the form of Startup Research Grant (File No. SRG/2022/001795). The authors express their sincere thank to DST in the form of the FIST grant (File No. SR/FST/MS-I/2019/46(C)) and IISER Mohali for providing necessary facilities to carryout this work. 

\bibliographystyle{plain}

\begin{thebibliography}{99}
\bibitem{AllanG} G.~R. Allan, On a class of locally convex algebras, Proc. London Math. Soc. (3) {\bf 17} (1967), 91--114.

     \bibitem{arens} R.~F. Arens, A generalization of normed rings, Pacific J. Math. {\bf 2} (1952), 455--471.

     \bibitem{SanthoshKP} B.~V.~R. Bhat, A. Ghatak and P. Santhosh~Kumar, Stinespring's theorem for unbounded operator valued local completely positive maps and its applications, Indag. Math. (N.S.) {\bf 32} (2021), no.~2, 547--578.

     \bibitem{dosiev} Anar ~A. Dosiev, Local operator spaces, unbounded operators and multinormed $C^*$-algebras, J. Funct. Anal. {\bf 255} (2008), no.~7, 1724--1760.

     \bibitem{gheondea} A. Gheondea, Operator models for Hilbert locally $C^*$-modules, Oper. Matrices {\bf 11} (2017), no.~3, 639--667.

     \bibitem{Inoue} A. Inoue, Locally $C\sp{\ast} $-algebra, Mem. Fac. Sci. Kyushu Univ. Ser. A {\bf 25} (1971), 197--235.

     \bibitem{Joita} M. Joi\c ta, Locally von Neumann algebras, Bull. Math. Soc. Sci. Math. Roumanie (N.S.) {\bf 42(90)} (1999), no.~1, 51--64.

     \bibitem{kothe} G. K\"othe, {\it Topological vector spaces. I}, translated from the German by D. J. H. Garling, 
Die Grundlehren der mathematischen Wissenschaften, Band 159, Springer-Verlag New York, Inc., New York, 1969.

    \bibitem{michael} E.~A. Michael, Locally multiplicatively-convex topological algebras, Mem. Amer. Math. Soc. {\bf 11} (1952), 79 pp.

    \bibitem{narici} L. Narici and E. Beckenstein, {\it Topological vector spaces}, second edition, 
Pure and Applied Mathematics (Boca Raton), 296, CRC Press, Boca Raton, FL, 2011.

    \bibitem{George} G.~F. Nassopoulos, Spectral decomposition and duality in commutative locally $C^*$-algebras, in {\it Topological algebras and applications}, 303--317, Contemp. Math., 427 (2007), Amer. Math. Soc., Providence, RI,

    \bibitem{paliogiannis} F.~C. Paliogiannis, Some remarks on the spectral theory and commutativity of unbounded normal operators, Complex Anal. Oper. Theory {\bf 8} (2014), no.~3, 733--744.

    \bibitem{Phillips} N.~C. Phillips, Inverse limits of $C^*$-algebras and applications, in {\it Operator algebras and applications, Vol.\ 1}, 127--185, London Math. Soc. Lecture Note Ser., 135, Cambridge Univ. Press, Cambridge, .

    \bibitem{schmudgen} K. Schm\"udgen, \"Uber LMC-Algebren, Math. Nachr. {\bf 68} (1975), 167--182.

    \bibitem{voiculescu} D.~V. Voiculescu, Dual algebraic structures on operator algebras related to free products, J. Operator Theory {\bf 17} (1987), no.~1, 85--98.

    \bibitem{kehezhu} K. Zhu, {\it An introduction to operator algebras}, Studies in Advanced Mathematics, CRC, Boca Raton, FL, 1993.
 \end{thebibliography}

\end{document}